	\newtheorem{thm}{Theorem}[section]
	\newtheorem{cor}[thm]{Corollary}
	\newtheorem{lem}[thm]{Lemma}
	\newtheorem{prop}[thm]{Proposition}
	\theoremstyle{definition}
	\newtheorem{defn}[thm]{Definition}
	\theoremstyle{remark}
	\newtheorem{rem}[thm]{Remark}
	\numberwithin{equation}{section}
	\numberwithin{equation}{section}
	\newcommand{\mbb}{\mathbb}
	\newcommand{\ov}{\overline}
	\newcommand{\ep}{\epsilon}
	\newcommand{\no}{\noindent}
	\newcommand{\cal}{\mathcal}
	\newcommand{\la}{\lambda}
\begin{document}
	\title{Examples of non--autonomous basins of attraction }
	\keywords{}
	\subjclass{Primary: 32H02  ; Secondary : 32H50}
	
	\author{Sayani Bera, Ratna Pal and Kaushal Verma} 
\address{Sayani Bera: Mathematics, Harish-Chandra Research Institute, HBNI, Allahabad-211019, India}
	\email{sayanibera@hri.res.in}
\address{Ratna Pal: Department of Mathematics, Indian Institute of Science, Bangalore 560012, India}
\address{
Current Address: Department of Mathematics, Indian Institute of Science Education and Research, Pune, Maharashtra-411008, India}
	\email{ratna@iiserpune.ac.in}
	\address{Kaushal Verma: Department of Mathematics, Indian Institute of Science, Bangalore 560012, India}
    \email{kverma@math.iisc.ernet.in}
	\pagestyle{plain}
	
	\begin{abstract}
The purpose of this paper is to present several examples of non--autonomous basins of attraction that arise from sequences of automorphisms of $\mathbb C^k$. In the first part, we prove that the non--autonomous basin of attraction arising from a pair of automorphisms of $\mathbb C^2$ of a prescribed form is biholomorphic to $\mathbb C^2$. This, in particular, provides a partial answer to a question raised  in \cite{Survey} in connection with Bedford's Conjecture about uniformizing stable manifolds. In the second part, we describe three examples of Short $\mathbb C^k$'s with specified properties. First, we show that for $k \geq 3$, there exist $(k-1)$ mutually disjoint Short $\mathbb C^k$'s in $\mathbb C^k$. Second, we construct a Short $\mathbb C^k$, large enough to accommodate a Fatou--Bieberbach domain, that avoids a given algebraic variety of codimension $2$. Lastly, we discuss examples of Short $\mathbb C^k$'s with (piece--wise) smooth boundaries.
\end{abstract}
	\maketitle

	\section{Introduction}
	
\no 	
Let $f$ be a holomorphic automorphism of a complex manifold $X$ equipped with a Riemannian metric, say $d_X$. Suppose $K \subset X$ is an invariant compact set on which $f$ is uniformly hyperbolic. For $p \in X$, let $\Sigma^s_f(p)$ be the stable manifold of $f$ through $p$, i.e.,
\[
\Sigma^s_f(p) = \{ z \in X : d_X(f^{\circ n}(z), f^{\circ n}(p)) \rightarrow 0 \text{ as } n \rightarrow \infty \}.
\]
By the Stable manifold theorem, $\Sigma^s_f(p) \subset X$ is an immersed complex submanifold, say of dimension $k$ and this turns out to be diffeomorphic to $\mathbb R^{2k}$. The question of whether $\Sigma^s_f(p)$ is biholomorphic to $\mathbb C^k$ for every $p \in K$ was raised by Bedford in \cite{openproblems}. While this is known to be true in several cases (see for example \cite{Jonsson-Varolin}, \cite{Abate-et-al} and \cite{Peters-Smit}), a result of Forn\ae ss--Stens\o nes in \cite{FSt} shows that $\Sigma^s_p(f)$ is biholomorphic to a domain in $\mathbb C^k$ for {\it every} $p \in K$. This was done by studying a related question which, more importantly, is a reformulation of Bedford's question:

\medskip

\no {\it Conjecture:} Let $\{ F_n \}$ be a sequence of automorphisms of $\mathbb C^k$ satisfying
\begin{equation}
 C \Vert z \Vert \le \Vert F_n(z) \Vert \le D \Vert z \Vert
\end{equation}
for $z \in B^k(0; 1)$ (the unit ball around the origin) and $0 < C, D < 1$. Then the basin of attraction of $\{F_n\}$ at the origin defined as
\[
\Omega_{\{F_n\}} = \{ z \in \mathbb C^k : F_n \circ F_{n-1} \circ  \cdots \circ F_1(z) \rightarrow 0 \text{ as } n \rightarrow \infty \} 
\]
is biholomorphic to $\mathbb C^k$.

\medskip

\no On the other hand, the necessity of having such uniform bounds for each $F_n$ on the unit ball was shown by Forn\ae ss in \cite{ShortC2}. In particular, if $\{H_n\}$ is a sequence of automorphisms of $\mathbb C^2$ of the form
\[
H_n(z, w) = (a_n w + z^2, a_n z)
\]
where $0 < \vert a_{n+1} \vert \le \vert a_n \vert^2$ and $0 < \vert a_0 \vert < 1$, then the corresponding basin $\Omega_{\{H_n\}}$ is {\it not} biholomorphic to $\mathbb C^2$ since it was shown to admit a non-constant bounded plurisubharmonic function. Note that the $H_n$'s do not satisfy a uniform bound condition near the origin. In this case, $\Omega_{\{H_n\}}$ can be written as the limit of an increasing union of domains each of which is biholomorphic to the unit ball in $\mathbb C^2$. Furthermore, the infinitesimal Kobayashi metric on $\Omega_{\{H_n\}}$ vanishes identically. Thus, $\Omega_{\{H_n\}}$ is neither all of $\mathbb C^2$ nor a Fatou--Bieberbach domain. Such a domain was christened {\it Short} $\mathbb C^2$ (or more generally, a {\it Short} $\mathbb C^k$ if the domain sits in $\mathbb C^k$, $k \geq 3$) in \cite{ShortC2}.

\medskip

\no As explained in \cite{ShortC2}, the existence of such domains is intrinsically linked with a version of the Levi problem namely, to decide whether the union of an increasing sequence of Stein domains is Stein. A counterexample constructed by Fornaess \cite{Fornaess} shows that this is not true in general if $k \geq 3$. However, in $\mathbb C^2$, Fornaess--Sibony \cite{FoSi} were able to classify those domains which arise as the increasing union of biholomorphic images of the ball and which additionally satisfy the property that the Kobayashi metric does not vanish identically. The other possibility is when the Kobayashi metric vanishes identically -- and this is where {\it Short} $\mathbb C^2$'s make their appearance.

\medskip

\no The purpose of this paper is two fold. First, we will study a seemingly straightforward version of the conjecture mentioned above that was stated as Problem $22$ in \cite{Survey}. We recall the statement below:

\medskip

\no {\it Problem:} Let $F$ and $G$ be automorphisms of $\mathbb C^k$ both having an attracting fixed point at the origin. Let $\{f_n\}$ be a sequence in which each $f_n$ is either $F$ or $G$. Is  the basin of attraction $\Omega_{\{f_n\}}$ biholomorphic to $\mathbb C^k$?

\medskip

\no Furthermore in \cite{Survey}, the maps defined by
\[
F(z_1, z_2) = (\alpha z_1 + z_2^2, \beta z_2)  \; \text{and} \; G(z_1, z_2) = (\beta z_1, \alpha z_2 + z_1^2)
\]
where $\vert \alpha \vert, \vert \beta \vert > 0$ and $\vert \alpha \vert^3 > \vert \beta \vert$ (for example, let $\alpha = 1/2$ and $\beta = 1/9$) were proposed as test cases to study. A moment's thought shows that this problem reduces to studying the non--autonomous basin of attraction of the sequence $\{H_{p(k), q(k)} : k \ge 1\}$ where $p, q : \mathbb N \rightarrow \mathbb N \cup \{0\}$ and
\[
H_{p(k), q(k)} = F^{p(k)} \circ G^{q(k)}.
\]
This observation is used to prove the following result:
\begin{thm}\label{theorem 5}
Let $F$ and $G$ be automorphisms of $\mbb{C}^2$ with an attracting fixed point at the origin such that 
 the matrices $A=F'(0)$ and $B=G'(0)$ are as follows:
        \[
A=
  \begin{pmatrix}
    \alpha & 0 \\
    0 & \beta
  \end{pmatrix} \; \text{and}\;
 B=
 \begin{pmatrix}
    \beta & 0 \\
    0 & \alpha
  \end{pmatrix}\; 
\]
with $|\alpha|^r < |\beta|$ for some $r>2.$ Then the basin of attraction of $\{H_{p(k), q(k)} : k \ge 1\}$ at the origin is biholomorphic to $\mbb C^2$ in the following cases:
\begin{enumerate}
\item[(i)] For every $k \ge 1$, $q(k)$ is bounded, $p(k) \ge  1$ and $(2 + \frac{3}{r-2})p(k) - q(k) \geq 0.$
\item[(ii)] For every $k \ge 1$,  $p(k)$ is bounded, $q(k) \ge 1$ and $(2 + \frac{3}{r-2})q(k) - p(k) \geq 0$.
\end{enumerate}
\end{thm}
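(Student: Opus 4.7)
The plan is to reduce, via the grouping observation already recorded in the introduction, to the sequence $\{H_{p(k),q(k)}\}_{k\ge 1}$, and then to verify that this sequence satisfies a non-autonomous linearization criterion --- of the kind developed in the Jonsson--Varolin and Peters--Smit references cited earlier --- which guarantees that the basin at the origin is biholomorphic to $\mathbb{C}^2$. Criteria of this type invariably rest on a quantitative non-resonance (spectral-gap) condition on the two eigenvalues of $A_k:=(H_{p(k),q(k)})'(0)$, uniform in $k$, together with a~priori control of the Taylor coefficients of $H_{p(k),q(k)}$ beyond the linear part. The arithmetic constraint in the statement is precisely the translation of this spectral-gap condition.

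Because $A=F'(0)$ and $B=G'(0)$ are simultaneously diagonal they commute, so
\[
A_k \;=\; A^{p(k)}B^{q(k)} \;=\; \mathrm{diag}\bigl(\alpha^{p(k)}\beta^{q(k)},\,\beta^{p(k)}\alpha^{q(k)}\bigr),
\]
with eigenvalues $\lambda_1^{(k)}=\alpha^{p(k)}\beta^{q(k)}$ and $\lambda_2^{(k)}=\beta^{p(k)}\alpha^{q(k)}$. Writing $|\beta|=|\alpha|^s$ for $s>0$, the hypothesis $|\alpha|^r<|\beta|$ is equivalent to $s<r$. The two degree-$2$ resonance relations $\lambda_1^{(k)}=\bigl(\lambda_2^{(k)}\bigr)^{2}$ and $\lambda_2^{(k)}=\bigl(\lambda_1^{(k)}\bigr)^{2}$ read, at the level of exponents, $(2s-1)p(k)=(s-2)q(k)$ and $(s-2)p(k)=(2s-1)q(k)$; the nonlinear parts of $F$ and $G$ starting at order $2$ make these the relevant resonances to rule out.

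In case (i), the sub-cases $s\le 2$ (for which the first resonance equation has no positive solution) and $s\le 1$ (which is the symmetric regime) are essentially trivial, and one focuses on $s>2$. The hypothesis $(2+\tfrac{3}{r-2})p(k)-q(k)\ge 0$ rearranges to $(2r-1)p(k)\ge(r-2)q(k)$, and since $x\mapsto(x-2)/(2x-1)$ is strictly increasing on $(2,\infty)$ with $s<r$, a direct computation yields
\[
(2s-1)p(k)-(s-2)q(k) \;\ge\; \tfrac{3(r-s)}{2r-1}\,q(k) \;\ge\; 0,
\]
which is exactly the quantitative non-resonance $|\lambda_1^{(k)}|\ge c_0\,|\lambda_2^{(k)}|^{2}$ for a $k$-independent $c_0>0$. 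This is the spectral-gap inequality required by the chosen linearization criterion. Case (ii) is obtained from case (i) by interchanging $(p,q)$ and the two coordinates, which swaps the roles of $\lambda_1^{(k)}$ and $\lambda_2^{(k)}$ and reduces the arithmetic to the same inequality.

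The main obstacle is not the spectral-gap arithmetic, but the construction of the biholomorphism itself. Concretely, one needs to assemble from the sequence $\{H_{p(k),q(k)}\}$ a non-autonomous Poincar\'e--Dulac conjugation that progressively absorbs the quadratic part at each stage, to show that the resulting sequence of conjugations converges on compact subsets of $\mathbb{C}^{2}$, and to verify that the image exhausts all of $\mathbb{C}^{2}$ rather than terminating at a Fatou--Bieberbach subdomain. The spectral-gap bound above enters at every stage of this construction, together with the a~priori uniform control on the Taylor coefficients of $F^{p(k)}$ and $G^{q(k)}$ (which remain bounded uniformly in $p(k),q(k)$ because $F,G$ are themselves contractions, so iterated expansions do not blow up). The strict inequality $|\alpha|^{r}<|\beta|$ with $r>2$, rather than merely $|\alpha|<|\beta|$, is what supplies the quantitative slack making the resulting estimates summable.
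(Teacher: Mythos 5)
Your eigenvalue arithmetic is sound where it applies and runs parallel to the paper's: writing $|\beta|=|\alpha|^{s}$, your estimate $(2s-1)p(k)-(s-2)q(k)\ge \tfrac{3(r-s)}{2r-1}\,q(k)$ is a correct (for $2s-1>0$) alternative to the paper's two-case computation with $|\alpha|^{r}\le\eta|\beta|$, and reducing case (ii) by the coordinate swap is exactly what the paper does. The genuine gap is elsewhere. The black-box results you appeal to -- in this paper, Peters' theorem (Theorem \ref{theorem 9}) -- require the sequence to be \emph{uniformly attracting}, i.e. two-sided bounds $C\Vert z\Vert \le \Vert F_n(z)\Vert \le D\Vert z\Vert$ on a fixed ball with constants independent of $n$, not merely a uniform spectral gap plus bounded Taylor coefficients. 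In case (i) the exponents $p(k)$ may be unbounded, so the maps $H_{p(k),q(k)}=F^{p(k)}\circ G^{q(k)}$ contract arbitrarily strongly and no uniform lower bound $C$ exists; and coefficient control is not a substitute, as Forn\ae ss's maps $H_n(z,w)=(a_nw+z^2,a_nz)$ show: they have fixed quadratic part and eigenvalues satisfying $|a_n|^2\le\xi|a_n|$, yet their basin is a Short $\mathbb{C}^2$, not $\mathbb{C}^2$. The step your proposal omits -- and the actual crux of the paper's argument -- is the regrouping: with $M$ a bound for $q(k)$, write $p(k)=MN(k)+R(k)$ and factor $H_{p(k),q(k)}=H_{R(k),0}\circ H_{M,0}\circ\cdots\circ H_{M,0}\circ H_{M,q(k)}$, which produces a sequence with the same basin in which both exponents are bounded by $M$; only then is the uniform-attraction hypothesis available and the eigenvalue inequality decisive. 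Your closing paragraph concedes that building the conjugation is ``the main obstacle'' but does not carry it out, so the proposal neither verifies the hypotheses of a known criterion nor supplies its own construction.

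Two smaller points. Your displayed bound only gives the exponent $\ge 0$, whereas the criterion needs a uniform $\xi<1$; you must add that the exponent is at least $\min\{2s-1,\tfrac{3(r-s)}{2r-1}\}>0$, using $p(k)\ge 1$ when $q(k)=0$. Also, dismissing $s\le 2$ as ``essentially trivial'' is too quick: your key multiplication needs $2s-1>0$, and for $0<s\le\tfrac12$ (i.e. $|\beta|^2\ge|\alpha|$) the gap inequality in your chosen ordering actually fails for factors with $q=0$, so that regime requires a different ordering or argument (to be fair, the paper's own Case 1 tacitly uses $|\beta|\le|\alpha|$ at the same spot).
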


\no Moreover, we will use methods similar to those in \cite{peters} to obtain a related result for more specific automorphisms of $\mathbb C^2$.

\begin{prop}\label{two maps}
Consider the automorphisms $$F(z_1,z_2)=(\alpha z_1+z_2^2,\beta z_2)\text{ and }G(z_1,z_2)=(\beta z_1, \alpha z_2+z_1^k)$$ where $|\alpha|^k<|\beta| \le |\alpha|^{k-1}$ for $k\geq 2$. Let $\{F_n\}$ be a sequence in which each $F_n$ is either $F$ or $G$. Then the non--autonomous basin of attraction at the origin of $\{F_n\}$ is biholomorphic to $\mathbb C^2$.
\end{prop}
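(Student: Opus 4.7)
The plan follows the telescoping-conjugation scheme of \cite{peters}, crucially exploiting the fact that both $F$ and $G$ are globally polynomially linearizable. The spectral gap hypotheses $|\alpha|^k < |\beta| \le |\alpha|^{k-1}$ (with $k\ge 2$) preclude the resonances $\alpha = \beta^2$ and $\alpha = \beta^k$: for instance $|\beta|^2 \le |\alpha|^{2(k-1)} < |\alpha|$ and $|\beta|^k \le |\alpha|^{k(k-1)} < |\alpha|$, both using $|\alpha|<1$. Consequently one verifies by direct substitution that
\[ \phi_F(z_1,z_2) = \Bigl(z_1 + \tfrac{z_2^2}{\beta^2 - \alpha},\ z_2\Bigr),\qquad \phi_G(z_1, z_2) = \Bigl(z_1,\ z_2 + \tfrac{z_1^k}{\beta^k - \alpha}\Bigr) \]
are polynomial automorphisms of $\mathbb{C}^2$ with $\phi_F^{-1}\circ F\circ \phi_F = A$ and $\phi_G^{-1}\circ G\circ \phi_G = B$ on the whole of $\mathbb C^2$.

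Next, write $L_n$ for the linear part of $F_n$, $\phi_n\in\{\phi_F,\phi_G\}$ for the matching linearizer, $\tilde F_n := F_n\circ\cdots\circ F_1$ and $\tilde L_n := L_n\circ\cdots\circ L_1$. Consider the sequence of polynomial automorphisms $\Psi_n := \tilde L_n^{-1}\circ \phi_n^{-1}\circ \tilde F_n$. Exploiting $\phi_n^{-1}\circ F_n = L_n\circ \phi_n^{-1}$, this simplifies to $\Psi_n = \tilde L_{n-1}^{-1}\circ \phi_n^{-1}\circ \tilde F_{n-1}$, yielding the telescoping identity
\[ \Psi_n - \Psi_{n-1} = \tilde L_{n-1}^{-1}\circ \bigl(\phi_n^{-1}-\phi_{n-1}^{-1}\bigr)\circ \tilde F_{n-1}, \]
which vanishes unless $F_n\ne F_{n-1}$ and otherwise reduces to the bi-monomial map $\pm(-c_F z_2^2,\ c_G z_1^k)$. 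Granted locally uniform convergence of $\Psi_n$ on $\Omega_{\{F_n\}}$, the limit $\Psi$ satisfies $\Psi(0) = 0$, $\Psi'(0) = I$ and $\Psi\circ \tilde F_n = \tilde L_n\circ \Psi$. Since every eigenvalue of every $L_n$ has modulus at most $|\alpha|<1$, the non-autonomous basin of the linear system $\{L_n\}$ is all of $\mathbb C^2$, and a standard Fatou--Bieberbach-style pull-back argument (given $w\in\mathbb C^2$, wait until $\tilde L_n(w)$ lies in a neighborhood of $0$ on which $\Psi$ is a local biholomorphism, then lift back via $\tilde F_n^{-1}$) shows that $\Psi\colon \Omega_{\{F_n\}}\to\mathbb C^2$ is a biholomorphism.

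The main obstacle is the summability estimate $\sum_n \|\Psi_n - \Psi_{n-1}\|_K < \infty$ on every compact $K \subset \Omega_{\{F_n\}}$. Writing $\tilde L_n = \mathrm{diag}(u_n,v_n)$ with $u_n = \alpha^{p_n}\beta^{q_n}$ and $v_n = \beta^{p_n}\alpha^{q_n}$ (where $p_n,q_n$ count the $F$'s and $G$'s among the first $n$ steps) and $(x_n,y_n) := \tilde F_n(z)$, the transition contributions reduce to bounding $|u_{n-1}|^{-1}|y_{n-1}|^2$ and $|v_{n-1}|^{-1}|x_{n-1}|^k$. The $x^k$-contribution is majorized by $(|\alpha|^k/|\beta|)^{p_{n-1}}(|\beta|^k/|\alpha|)^{q_{n-1}}$, with both bases strictly less than $1$ under the hypotheses (the first by $|\alpha|^k<|\beta|$, the second since $|\beta|^k\le|\alpha|^{k(k-1)}<|\alpha|$ for $k\ge 2$), hence geometrically summable for free. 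The $y^2$-contribution, naively bounded by $(|\beta|^2/|\alpha|)^{p_{n-1}}(|\alpha|^2/|\beta|)^{q_{n-1}}$, is the delicate one: for $k\ge 3$ the second base is at least $1$, so any naive estimate fails. Overcoming this requires the sharpness of $|\beta|\le|\alpha|^{k-1}$ together with a careful accounting of the transition structure of $\{F_n\}$ -- each $y^2$-type contribution must be paired with nearby $x^k$-type contributions that supply the compensating geometric factor. This is the technical core of the argument and is where the full strength of the spectral hypothesis is used.
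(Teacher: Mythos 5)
Your setup is fine as far as it goes (the linearizers $\phi_F,\phi_G$ exist because $\beta^2\neq\alpha$ and $\beta^k\neq\alpha$ under the stated inequalities, and the telescoping identity for $\Psi_n$ is correct), but the proof is incomplete exactly where the proposition lives, and the missing step is not just "delicate" — it fails. The two transition contributions sit in \emph{different} coordinates: at a change of letter, $\Psi_n-\Psi_{n-1}$ has first component $-c_F\,y_{n-1}^2/u_{n-1}$ and second component $c_G\,x_{n-1}^k/v_{n-1}$, so no "pairing" of $y^2$-terms with $x^k$-terms can produce compensation in an absolute-convergence argument. Quantitatively, for a generic basin point $|y_j|\sim|v_j|$ (along $G$-steps $y\mapsto\alpha y+x^k$ with $|x^k|$ negligible since $|\beta|^k<|\alpha|$, and along $F$-steps $y\mapsto\beta y$), so the first-component terms are of size $(|\beta|^2/|\alpha|)^{p_j}(|\alpha|^2/|\beta|)^{q_j}$. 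Since $|\alpha|^2/|\beta|\ge|\alpha|^{3-k}\ge 1$ for $k\ge3$, and $>1$ whenever $|\beta|<|\alpha|^2$ (which is automatic for $k\ge4$ and allowed for $k=3$), an admissible sequence with rapidly lengthening blocks of $G$'s makes these terms tend to infinity in modulus; then the partial sums are not Cauchy and $\Psi_n$ diverges pointwise (indeed already $x_n/u_n=x_{n_0}/u_{n_0}+\sum y_j^2/(\alpha u_j)$, summed over $F$-steps, diverges). Note also that your "for free" bound $|x_{n-1}|\lesssim|u_{n-1}|$ for the $x^k$-contribution depends on boundedness of this same sum, so it is circular. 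In short, conjugating the composed sequence all the way to its linear part is precisely what the hypotheses do not permit for $k\ge3$; your scheme essentially only works for $k=2$ (and the borderline $|\beta|=|\alpha|^2$, $k=3$), whereas the proposition requires all $k\ge2$.

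The paper sidesteps this by \emph{partial} linearization: it conjugates $\{F_n\}$ to the sequence $\{G_n\}$ in which each occurrence of $G$ is replaced by its linear part $(\beta z_1,\alpha z_2)$ while $F$ keeps its quadratic term. The conjugating maps are the single-coefficient shears $X_n(z_1,z_2)=(z_1,z_2+X_{2,n}z_1^k)$ produced by a degree-$k$ truncated recursion $X_{n+1}=[F_n\circ X_n\circ G_n^{-1}]_k$; the coefficient $X_{2,n}$ then evolves under a sequence of \emph{expanding} affine maps of $\mathbb{C}$, and a lemma from \cite{peters} supplies an initial value $z_0$ whose orbit stays bounded, so $\{X_n\}$ is a bounded family. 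Lemma 14 of \cite{Survey} (this is where $|\alpha|^k<|\beta|$ is used, making the order-$(k+1)$ truncation errors harmless for the uniform contraction rates) then identifies the basin of $\{F_n\}$ with that of $\{G_n\}$, and the latter is all of $\mathbb{C}^2$ because the $G_n$'s are triangular contractions. The moral difference: the paper never attempts to remove the $z_2^2$ term of $F$ — exactly the term whose removal generates your divergent $y^2/u$-sums.
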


\medskip\no 
Second, let us recall a classical result of Rosay--Rudin \cite{RR1} which shows that if an automorphism of $\mathbb C^k$ has an attracting fixed point, then the associated basin of attraction is a Fatou--Bieberbach domain. This result forms the basis of several examples of Fatou--Bieberbach domains with prescribed properties that were constructed by them in \cite{RR1}. In the same vein, it is natural to ask whether it is possible to construct {\it Short} $\mathbb C^k$'s with specified properties. In what follows, we provide several examples of {\it Short} $\mathbb C^k$'s that satisfy additional properties -- these being much in the spirit of what is known for Fatou--Bieberbach domains.

\medskip\no 
For the first example, recall that a shift--like map of type $\nu$ (where $1 \le \nu \le k-1$) is an automorphism of $\mathbb C^k$ given by
\[
S(z_1,z_2, \hdots,z_k)=(z_2,z_3,\hdots,z_k, h(z_{k-\nu+1})-az_1)
\]
where $h$ is an entire function on $\mathbb C$ and $a \in \mathbb C^{\ast}$. These maps were introduced and studied by Bedford--Pambuccian \cite{BP}. By working with suitable shift--like maps and the filtration they preserve, it is possible to create a mutually disjoint finite collection of {\it Short} $\mathbb C^k$'s.

\begin{thm}\label{theorem 2}
For $k \ge 3$, there exist $(k-1)$ {\it Short} $\mathbb C^k$'s, say $\Omega_i$, $1 \le i \le k-1$, such that
\[
\bigcup_{i = 1}^{k-1} \Omega_i \subset \mathbb C^k \text{ and } \Omega_i \cap \Omega_j = \emptyset \text{ for } i \not= j.
\]
\end{thm}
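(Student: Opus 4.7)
The approach, suggested by the discussion of shift-like maps preceding the theorem, is to use one shift-like map of each type $\nu = 1, 2, \ldots, k-1$ to produce $k-1$ Short $\mathbb{C}^k$'s (one per type), with disjointness guaranteed by the filtration structure that these maps preserve in $\mathbb{C}^k$. Each $\Omega_\nu$ will be built as a non-autonomous basin of attraction in the spirit of the original Forn\ae ss construction of \cite{ShortC2}, then repositioned to a distinct part of $\mathbb{C}^k$ using the filtration.

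\textbf{Construction of each $\Omega_\nu$.} For each $\nu \in \{1, \ldots, k-1\}$, fix a sequence of shift-like maps of type $\nu$ of the form
\[
S_n^{(\nu)}(z_1, \ldots, z_k) = (z_2, z_3, \ldots, z_k, a_n z_1 + z_{k-\nu+1}^{d}),
\]
where $a_n \in \mathbb{C}^\ast$ satisfy $|a_n| < 1$ and decrease very rapidly (for example $|a_{n+1}| \le |a_n|^3$). The non-autonomous basin of attraction at the origin
\[
\widetilde\Omega_\nu = \{z \in \mathbb{C}^k : S_n^{(\nu)} \circ \cdots \circ S_1^{(\nu)}(z) \to 0\}
\]
should then, by an adaptation of the argument in \cite{ShortC2}, admit a non-constant bounded plurisubharmonic function and be a Short $\mathbb{C}^k$ rather than a Fatou--Bieberbach domain. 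Importantly, the rapid decay of $\{a_n\}$ forces $\widetilde\Omega_\nu$ to be essentially bounded and ``pointed'' in the direction of the $(k-\nu+1)$-th coordinate, which is the coordinate driving the polynomial term.

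\textbf{Disjointness via filtration.} A shift-like map of type $\nu$ preserves a filtration of $\mathbb{C}^k$ into a compact box and escape regions at infinity (following Bedford--Pambuccian \cite{BP}), whose geometry depends in an essential way on $\nu$ since the escape is dictated by the index $k-\nu+1$. Two different types $\nu \ne \mu$ therefore give rise to escape regions of genuinely different shape. Choosing translation vectors $p_\nu$ one for each $\nu$ so that $\Omega_\nu := \widetilde\Omega_\nu + p_\nu$ sits inside a forward-escape region of the type-$\mu$ filtration for every $\mu \ne \nu$, we obtain $\Omega_\nu \cap \Omega_\mu = \emptyset$: $\Omega_\nu$ lies in a region that $\Omega_\mu$ (being the forward basin for the type-$\mu$ dynamics, hence constrained to the non-escape region) must avoid. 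Each $\Omega_\nu$ is a translate of $\widetilde\Omega_\nu$, so is still a Short $\mathbb{C}^k$.

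\textbf{Main obstacle.} The crux of the argument is combinatorial: identifying, from the filtrations for all types $1, \ldots, k-1$, a consistent collection of $k-1$ pairwise disjoint ``slots'' in $\mathbb{C}^k$ into which the Forn\ae ss-type basins can be translated. This requires a careful analysis of how the escape regions of shift-like maps of different types intersect and where they remain genuinely distinguishable. The secondary technical difficulty is the simultaneous balancing of the coefficients $a_n$ for all $\nu$ so that each $\widetilde\Omega_\nu$ (i) is a true Short $\mathbb{C}^k$ and (ii) remains sufficiently localized to fit into its designated slot without leaking into a neighboring one.
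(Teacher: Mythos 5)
Your disjointness mechanism has a genuine gap, and I do not see how to repair it in the form you propose. For a shift-like sequence of type $\nu$ the Bedford--Pambuccian filtration confines the forward-bounded orbits, hence the attracting basin $\widetilde\Omega_\nu$, to $V_R\cup V_R^-$ with $V_R^-=\bigcup_{i=1}^{k-\nu}V_i$, while the forward-escape region of type $\mu$ is $V_R^+=\bigcup_{i=k-\mu+1}^{k}V_i$. A Short $\mathbb C^k$ is necessarily unbounded (its Kobayashi metric vanishes identically), so the unbounded part of $\widetilde\Omega_\nu$ lies in the wedges $V_1\cup\dots\cup V_{k-\nu}$, and translating by $p_\nu$ does not change these asymptotic directions. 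Now take any pair with $\nu>\mu$: the index sets $\{1,\dots,k-\nu\}$ and $\{k-\mu+1,\dots,k\}$ are disjoint, so the non-escape cone that constrains $\Omega_\nu$ and the type-$\mu$ escape cone you want it to sit in meet only along the thin walls where two different coordinates simultaneously realize $\|z\|_\infty$ (up to bounded corrections coming from the translations). Your requirement that $\Omega_\nu$ lie inside a (translate of the) type-$\mu$ escape region for \emph{every} $\mu\neq\nu$ would therefore force the entire unbounded part of $\Omega_\nu$ into a bounded-width neighbourhood of such a wall --- a degenerate behaviour you have not established and which there is no reason to expect; the non-escape regions of the various types are nested (they all contain the $z_1$-dominant wedge), not ``genuinely different'' in the way your slot argument needs. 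You flag this as the main obstacle, but it is precisely the step that carries the theorem, so the proposal is incomplete at its core. A second unproved assertion is that the basin of a sequence consisting of a \emph{single} type-$\nu$ shift per step is a Short $\mathbb C^k$: Forn\ae ss's argument does not transfer verbatim, since each single shift contracts only one coordinate linearly per application and the $d^{-n}\log$ normalization has to be recalibrated; this also needs an argument.

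For comparison, the paper sidesteps both issues by building only \emph{one} Short $\mathbb C^k$, namely the basin $\Omega$ of the sequence $F_n(z)=(\eta_n z_k,\,z_2^d+\eta_n z_1,\dots,z_k^d+\eta_n z_{k-1})$, each $F_n$ being a composition of $k-1$ shifts so that every component except the first has the form $z^d+\eta_n w$ and the Forn\ae ss scheme applies directly; the filtration lemma then shows $\Omega$ is contained in a bounded polydisk union a \emph{single} dominant-coordinate wedge. The $k-1$ domains are obtained from this one $\Omega$ by the coordinate transpositions $\phi_i$ (swapping the $i$-th and $k$-th coordinates) followed by the translations $z_k\mapsto z_k+3(i-1)R$, and disjointness is an elementary two-case check: the translated boxes are separated, and two sets whose unbounded parts are dominant in different coordinates cannot meet. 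If you want to salvage your multi-type construction, you would at a minimum have to prove that each basin's unbounded part is confined to a wedge not shared by any of the others --- which is exactly what the single-construction-plus-permutation trick gives for free.
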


\no 
For the second example, recall that Hubbard--Buzzard \cite{HuBu} have constructed a Fatou--Bieberbach domain that avoids a given algebraic variety of codimension $2$ in $\mathbb C^k$ for $k \ge 3$. Again, by working with suitable shift--like maps and by controlling the rate of convergence of their linear parts, we have:

\begin{thm}\label{theorem 1}
Let $V \subset \mathbb C^k$, $k \geq 3$, be an algebraic variety of codimension $2$. Then there exists a Short $\mathbb C^k$, say $\Omega$ and a Fatou--Bieberbach domain, say $D$ such that $D \subset \Omega \subset \mathbb C^k \setminus V$.
\end{thm}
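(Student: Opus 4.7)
The plan is to build $\Omega$ as the non--autonomous basin of attraction at the origin of a carefully chosen sequence of shift--like automorphisms of $\mathbb C^k$, adapting Fornæss's construction of a Short $\mathbb C^2$ (which produces the \emph{Short} property by letting the linear parts contract super--quadratically) to the setting of Hubbard--Buzzard (whose codimension--$2$ avoidance trick provides just enough degrees of freedom in the nonlinear part). Concretely, fix a shift--like type $\nu$ (say $\nu=1$) and consider
\[
S_n(z_1,\dots,z_k) = \bigl(z_2,\dots,z_k,\, P_n(z_k) - a_n z_1\bigr),
\]
where $\{a_n\} \subset \mathbb C^*$ is to satisfy $|a_{n+1}| \le |a_n|^2$ with $|a_0|$ small, and the polynomials $P_n$ are to be chosen inductively. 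Because the iterated linear parts preserve the shift--like filtration and contract at rates governed by $\{a_n\}$, a computation parallel to Fornæss's shows that the basin $\Omega_{\{S_n\}}$ is an increasing union of biholomorphic images of balls and supports a non--constant bounded plurisubharmonic function, so it is a Short $\mathbb C^k$.

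The avoidance of $V$ is then achieved by an inductive perturbation of the coefficients of $P_n$. Writing $\Omega_n := (S_n \circ \cdots \circ S_1)^{-1}(B^k(0; r_n))$ for a carefully decreasing sequence $r_n \downarrow 0$, one has $\Omega = \bigcup_n \Omega_n$; it therefore suffices to ensure $\Omega_n \cap V = \emptyset$ at each stage. Assume this has been arranged through stage $n$. The parameter space of admissible $P_{n+1}$'s compatible with the contraction condition $|a_{n+1}| \le |a_n|^2$ is a nonempty open set in some $\mathbb C^d$; the locus inside it for which $(S_{n+1}\circ\cdots\circ S_1)^{-1}(\overline{B^k(0;r_{n+1})})$ meets a fixed large compact piece of $V$ is, by a Sard/genericity argument, a proper analytic subset because $V$ has codimension $2$ (generic projection of $V$ to the $P_{n+1}$--parameter slice still has codimension $\geq 1$, so can be dodged by an open set of perturbations). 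A small enough admissible perturbation achieves the avoidance without disturbing either the contraction rate or the avoidances already achieved on $\Omega_1,\dots,\Omega_n$.

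For the Fatou--Bieberbach domain $D \subset \Omega$, one can either rely on the fact that any Short $\mathbb C^k$ realized as an increasing union of balls contains such a $D$, or, more concretely, pre--compose the sequence by a long initial block where $a_n$ is held constant and $P_n = 0$: on this block the maps are a single contracting linear automorphism whose Rosay--Rudin basin sits inside $\Omega$ and is biholomorphic to $\mathbb C^k$.

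The main obstacle is the compatibility of the two requirements in the inductive step: the perturbation of $P_{n+1}$ that dodges $V$ must be small enough not to violate the super--quadratic decay condition on the linear parts (which is what makes $\Omega$ short rather than Fatou--Bieberbach), and must simultaneously preserve avoidance on the already--constructed $\Omega_1,\dots,\Omega_n$. The codimension--$2$ hypothesis on $V$ is exactly what supplies the extra complex parameter needed to make both demands simultaneously satisfiable; a codimension--$1$ obstacle would generically intersect any one--parameter admissible family and block the construction.
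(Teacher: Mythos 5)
Your proposal has two genuine gaps, one in each half of the argument. First, the avoidance step. You propose to dodge $V$ by inductively perturbing the coefficients of $P_{n+1}$, claiming the bad parameter locus is a proper analytic subset ``because $V$ has codimension $2$''. This does not work: each $\Omega_n$ is an \emph{open, full--dimensional} set (it contains a fixed neighbourhood of the origin, since the origin is the common attracting fixed point and the contraction estimates hold on a ball of definite size), so whether $\Omega_n$ meets $V$ is an open, robust condition in the parameters --- if $V$ passes through or near the origin, no small admissible perturbation of the nonlinear terms can remove the intersection, and the ``generic projection of $V$ to the parameter slice'' has no meaning here since the parameters describe maps, not points of $\mathbb C^k$. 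The codimension--$2$ hypothesis is used quite differently in the paper (following Hubbard--Buzzard/Chirka): there is a single \emph{linear} change of coordinates $L_\epsilon$ with $L_\epsilon(V)\subset A_\epsilon\cup B_\epsilon$, and after a further translation the image of $V$ lies in the region $V^+$ of the shift--like filtration, which the basin never meets (Lemma \ref{filtration lemma}); so one fixed affine--linear coordinate change, not an inductive perturbation, produces the avoidance.

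Second, the Fatou--Bieberbach domain inside $\Omega$. Your first alternative (``any Short $\mathbb C^k$ realized as an increasing union of balls contains an FB domain'') is exactly what is \emph{not} known --- the paper explicitly remarks after Proposition \ref{pre theorem 1} that the construction does not a priori ensure this. Your second alternative is also incorrect: pre--composing with a finite initial block of a single contracting automorphism only changes the basin by a biholomorphism, and the Rosay--Rudin basin of a contracting linear (or autonomous shift--like) map is a set containing arbitrarily large pieces of $\mathbb C^k$ (all of $\mathbb C^k$ in the linear case), which cannot ``sit inside'' a Short $\mathbb C^k$; what matters is the tail of the sequence, not an initial block. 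The paper's actual mechanism is a scaling conjugation valid for \emph{all} $n$: choosing $\eta_n=a^{2^n+1}$ (allowed by Lemma \ref{condition on eta}) one has $F_n=l_{a^{2^{n+1}}}\circ F\circ l_{a^{-2^n}}$ for a fixed autonomous shift--like $F$ whose basin $\Omega_F$ is Fatou--Bieberbach by Rosay--Rudin, and then one checks directly that the scaled copy $l_a(\Omega_F)$ lies in the non--autonomous basin because $\|F(n)(z)\|<a^{2^{n+1}}$ for $z\in l_a(\Omega_F)$. Without some such compatibility between the contraction rates and a single autonomous model, the containment $D\subset\Omega$ is not justified.
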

	
\no Let us recall the methods of constructing a {\it Short} $\mathbb C^2$ from \cite{ShortC2}:
\begin{itemize}
\item Let $\{F_n\}$ be a sequence of automorphisms of $\mbb{C}^2$ of the form 
\[ F_n(z_1,z_2)=(a_n z_2+z_1^2, a_n z_1)\] where $0 < |a_{n+1}| \le |a_n|^2$ and $0 <|a_0|<1$. Let $\phi$ and $a_{\infty}$ be defined as
\[ \phi= \lim_{n \to \infty} \frac{\log \phi_n(z)}{d^n} \text{ and }a_{\infty}=\lim_{n \to \infty}a_n^{-2^n}\] where \[\phi_n=\max\{|\pi_i \circ F_n \circ \hdots F_0(z)|, |a_n|: 1 \le i \le 2\}.\]
Here, and in what follows $\pi_i$ is the standard projection map on the $i-$th coordinate for $1 \le i \le k$. 
\medskip\no 
Then for every $c > \log a_{\infty}$, the $c$--sublevel set of $\phi$, i.e.,
\[
\{z \in \mathbb C^2: \phi < c\}
\]
is a {\it Short} $\mathbb C^2$. Moreover, the $0-$sublevel set of $\phi$ is the non--autonomous basin of attraction for the sequence $\{F_n\}$.

\medskip

\item Let $H$ be a  H\'{e}non map of the form
\[ H(z_1,z_2)=(z_2,\delta z_1+z_2^d).\] Then for every $c >0$, the $c$--sublevel set of the positive Green's function of $H$, i.e.,
\[ G^+(z)=\lim_{n \to \infty} \frac{1}{d^n}\log^+\|H^n(z)\|\]
 is a {\it Short} $\mathbb C^2$.

\end{itemize}
Both $\phi$ and $G^+$ are known to be pluriharmonic on the sets 
\[\{z \in \mbb C^2: \phi(z)>\log a_{\infty} \} \text{ and } \{ z \in \mbb C^2: G^+(z)>0\}\] respectively. Hence, by Sard's theorem, for most values of $c$ in the admissible range, the $c$--sublevel sets of either $\phi$ or $G^+$ are {\it Short} $\mathbb C^2$'s with smooth boundary. However, this does not ensure that the $0$--sublevel set of $\phi$, which is the non--autonomous basin of attraction at the origin, always has smooth boundary. In this regard, we have:
\begin{thm}\label{theorem 3}
There exists a {\it Short} $\mathbb C^2$ with $C^{\infty}$--smooth boundary that arises as the non-autonomous basin of attraction for a sequence of automorphisms having an attracting fixed point at the origin.
\end{thm}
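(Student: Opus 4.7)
My plan starts from the standard Short $\mathbb{C}^2$ construction recalled above. Fix a sequence $\{F_n\}$ with $F_n(z_1,z_2)=(a_n z_2+z_1^2, a_n z_1)$, $|a_{n+1}|\le |a_n|^2$ and $|a_0|<1$; let $\phi$ be the associated plurisubharmonic function and $a_\infty=\lim a_n^{1/2^n}$. The basin of attraction at the origin is $\Omega = \{\phi<0\}$, which is a Short $\mathbb{C}^2$. Since $\log|a_\infty|\le \log|a_0|<0$, the pluriharmonicity of $\phi$ on $\{\phi>\log|a_\infty|\}$ noted in the excerpt says that $\phi$ is real--analytic on an open neighborhood of $\partial\Omega=\{\phi=0\}$. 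Consequently $\partial\Omega$ is $C^\infty$--smooth precisely when $0$ is a regular value of $\phi$, i.e.\ when $d\phi\neq 0$ on $\{\phi=0\}$.

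By Sard applied to the real--analytic $\phi$ on its pluriharmonic locus, the set of critical values already has measure zero, which is why generic sublevels $\{\phi<c\}$ are smooth Short $\mathbb{C}^2$'s; the task is to engineer $\{a_n\}$ so that $0$ itself is regular. The plan is to introduce a multi--parameter perturbation: hold the tail $\{a_n\}_{n>N}$ fixed and allow $(a_0,\ldots,a_N)$ to vary over an open neighborhood $U\subset\mathbb C^{N+1}$ of the given values within the admissible region $|a_{n+1}|\le|a_n|^2$, $|a_0|<1$. For each $t=(t_0,\ldots,t_N)\in U$ one obtains a valid sequence $\{F_n^{(t)}\}$ whose terms still have attracting fixed points at the origin (eigenvalues $\pm a_n^{(t)}$ of modulus $<1$), a real--analytic $\phi_t$, and a basin $\Omega_t=\{\phi_t<0\}$; moreover $a_\infty^{(t)}=a_\infty$ since $a_\infty$ is a tail invariant, so the pluriharmonic locus of $\phi_t$ contains a neighborhood of $\partial\Omega_t$ uniformly in $t$. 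A parametric transversality argument applied to the real--analytic locus $C=\{(t,z)\in U\times\mathbb C^2:\phi_t(z)=0,\; d_z\phi_t(z)=0\}$, of generic real codimension $5$ in $U\times\mathbb C^2$, shows that its projection $\pi:C\to U$ has image of measure zero; one then selects $t_*\in U\setminus\pi(C)$ for which $0$ is a regular value of $\phi_{t_*}$. The basin $\Omega_{t_*}$ is then the desired Short $\mathbb{C}^2$ with $C^\infty$--smooth boundary, realized as the non--autonomous basin of attraction of $\{F_n^{(t_*)}\}$.

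The main obstacle will be justifying the codimension statement for $C$: one needs the family $\{\phi_t\}$ to depend on $t$ generically enough that the five real equations cutting out $C$ are independent, equivalently that the map $(t,z)\mapsto(\phi_t(z),d_z\phi_t(z))$ is transverse to $0$ at every point of $C$. Since $\phi_t$ is defined as a compositional limit $\phi_t(z)=\lim_n(\log\phi_n^{(t)}(z))/2^n$, verifying this transversality requires unravelling how variations in $(a_0,\ldots,a_N)$ propagate through the iterates $F_n^{(t)}\circ\cdots\circ F_0^{(t)}$ and influence the first derivatives of the limit---delicate but manageable for suitably generic initial data. An alternative route, bypassing transversality, is an inductive construction: at stage $n$ choose $a_n$ in the allowed range so that $0$ is a regular value of the approximant $(\log\phi_n)/2^n$ on a fixed compact set containing the anticipated boundary of $\Omega$, then pass to the limit and transfer regularity to $\phi$ via its pluriharmonicity near $\partial\Omega$.
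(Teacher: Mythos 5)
Your main route has a fatal structural flaw, not merely an unverified transversality hypothesis. Keep the tail $\{a_n\}_{n>N}$ fixed and let $t=(a_0,\dots,a_N)$ vary, and write $F(N)^{(t)}=F_N^{(t)}\circ\cdots\circ F_0^{(t)}$. If $\psi$ denotes the limit function built from the tail sequence $\{F_{N+1+j}\}_{j\ge 0}$ alone, then for $n=N+1+j$ one has $\phi_n^{(t)}(z)=\tilde\phi_j\bigl(F(N)^{(t)}(z)\bigr)$, and hence, exactly,
\[
\phi_t \;=\; 2^{-(N+1)}\,\psi\circ F(N)^{(t)} .
\]
Consequently $\Omega_t=\bigl(F(N)^{(t)}\bigr)^{-1}\bigl(\{\psi<0\}\bigr)$, $\{\phi_t=0\}=\bigl(F(N)^{(t)}\bigr)^{-1}\{\psi=0\}$, and $d\phi_t(z)=2^{-(N+1)}\,d\psi\bigl(F(N)^{(t)}(z)\bigr)\circ DF(N)^{(t)}(z)$ with $DF(N)^{(t)}(z)$ invertible. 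So $z$ is a critical point of $\phi_t$ on its zero level if and only if $F(N)^{(t)}(z)$ is a critical point of $\psi$ on its zero level: whether $0$ is a regular value of $\phi_t$, and indeed whether $\partial\Omega_t$ is smooth, is completely independent of $t$. Your head perturbation is pure gauge --- it moves the boundary by an automorphism of $\mathbb C^2$ --- so the universal critical set $C$ projects onto either the empty set or all of $U$, and the parametric Sard argument yields nothing. Any genuine perturbation must act on the tail, where a finite--dimensional Sard theorem no longer applies directly and the dependence of $\phi$ (and $d\phi$) on the parameters is exactly the delicate limit you flagged but did not control.

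Your one--sentence ``alternative route'' is in fact much closer to the paper's actual proof, but as sketched it still misses the essential point: $0$ being a regular value of each approximant $2^{-n}\log\phi_n$ does not pass to the limit without uniform $C^1$ (in fact $C^l$ for all $l$) control of the level sets near the boundary. The paper achieves precisely this by an inductive choice of the coefficients for the shift--like family $F_\alpha(z_1,z_2)=(\alpha z_2,\,z_2^2+\alpha z_1)$: at stage $n$, $\alpha_n$ is taken so small that, on an exhausting family of polydiscs $K_p=F(p)^{-1}\bigl(\overline{\Delta^2(0;R_{p+1}')}\bigr)$, the relevant piece of $\partial\Omega_{n,1}$ is a graph over a distinguished face which moves by at most $\ep_p/2^{\,n-p}$ in $C^p$--norm (Lemmas \ref{smooth_function}, \ref{neighbourhood}, \ref{topological} and Globevnik's Lemma \ref{Globevnik}, together with Proposition \ref{main proposition} to keep the basins nested and the limit basin a {\it Short} $\mathbb C^2$). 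The graphs then converge in every $C^l$ norm, and $\partial\Omega\cap K_p$ is a $C^p$--smooth graph for each $p$, whence $C^\infty$--smoothness on all of $\mathbb C^2$. To repair your proposal you would need either a genuinely infinite--dimensional (tail) perturbation argument with quantitative control of $d\phi_t$, or to carry out the inductive scheme with these summable $C^l$ bounds; the pluriharmonicity of $\phi$ near $\partial\Omega$ alone does not substitute for them.
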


\no This is motivated by Stens\o nes's proof of the existence of Fatou--Bieberbach domains with $C^{\infty}$--smooth boundary (see \cite{St}). We follow a similar approach for {\it Short} $\mathbb C^2$'s as well, namely we try to control the behaviour of the boundary on a large enough polydisc and then exhaust all of $\mathbb C^2$ with polydiscs of increasing size. A construction of a {\it Short $\mathbb{C}^2$} with $C^{\infty}$--smooth boundary on a fixed polydisc was also given by Console in \cite{console}.

\medskip\no 
By adopting some methods from Globevnik's work \cite{Gl} and \cite{console}, it is possible to retain boundary smoothness and at the same time avoid a given variety of codimension $2$.

\begin{thm}\label{theorem 4}
Let $V \subset \mathbb C^k$, $k \ge 3$, be an algebraic variety of codimension $2$. Then there exists a sequence of automorphisms $\{F_n\}$ of $\mathbb C^k$ with a common attracting fixed point, say $p$ such that the basin of attraction $\Omega_{\{F_n\}}$ at $p$ is a piecewise smooth {\it Short} $\mathbb C^k$ that does not intersect $V$.
\end{thm}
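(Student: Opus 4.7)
The plan is to interleave two inductive constructions that appear separately in the paper: the Stens\o nes--Console polydisc exhaustion that yields a Short $\mathbb C^k$ with (piecewise) smooth boundary as in Theorem \ref{theorem 3}, and the Andersen--Lempert/Globevnik-type perturbation scheme that keeps the basin disjoint from a codimension-$2$ algebraic variety as in Theorem \ref{theorem 1}. Accordingly I will build the sequence $\{F_n\}$ step by step, where each $F_n$ is an Andersen--Lempert perturbation of a fixed linear contraction $L$ at $p=0$ whose spectrum is chosen, as in the Short $\mathbb C^k$ recipe recalled after Theorem \ref{theorem 1}, so that the basin is genuinely Short.

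Fix an exhaustion $P_1 \Subset P_2 \Subset \cdots$ of $\mathbb C^k$ by concentric polydiscs. Having chosen $F_1,\dots,F_{n-1}$, pick $F_n$ so that the following hold: (i) on a fixed neighbourhood of $0$, $F_n$ is so close to $L$ that $0$ remains attracting and the resulting limit basin is Short; (ii) the partial composition sends $V \cap P_n$ into $\mathbb C^k \setminus \overline{P_n}$, using that codimension-$2$ leaves enough room in the Andersen--Lempert space to realize such an isotopy (this is the content of the Hubbard--Buzzard/Globevnik argument already exploited in the proof of Theorem \ref{theorem 1}); and (iii) the boundary of the corresponding partial basin is piecewise $C^\infty$ on $P_n$, by the polydisc-localised smoothing of Theorem \ref{theorem 3}.

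Each stage's perturbation is taken small relative to the previous stages so that the conditions (i)--(iii) already established on each $P_m$, $m < n$, are preserved in the limit; since $V = \bigcup_m (V \cap P_m)$ is handled piece by piece and $\bigcup P_m = \mathbb C^k$, the limiting basin is a Short $\mathbb C^k$ with piecewise $C^\infty$ boundary that is disjoint from $V$.

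The main obstacle is the interaction between the two families of perturbations: a smoothing adjustment on $P_n$ can carry a previously cleared fragment of $V$ back toward the basin, while a perturbation designed to push $V$ off $P_n$ can roughen an arc of boundary already made smooth on some $P_m \subsetneq P_n$. The reason to expect the construction to go through is that both the boundary-smoothness condition (open in $C^1$) and the variety-avoidance condition (open in the compact-open topology) are open on the Andersen--Lempert class of automorphisms, and the codimension-$2$ hypothesis on $V$ gives enough flexibility to realise the isotopy of condition (ii) with support in the annular region $P_n \setminus P_{n-1}$, thereby leaving the smoothness already achieved on $P_{n-1}$ untouched. The word \emph{piecewise} in the conclusion reflects precisely the seams at the boundaries of these annuli, where the two constructions are grafted together.
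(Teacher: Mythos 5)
Your plan misidentifies both mechanisms the theorem actually needs, and the gaps are not cosmetic. First, the Short property cannot come from ``each $F_n$ is a small perturbation of a fixed linear contraction $L$ whose spectrum is chosen so that the basin is Short.'' If all $F_n$ are uniformly comparable to a fixed contraction near $0$, the sequence is uniformly attracting and the basin is expected to be all of $\mathbb C^k$ (this is exactly the setting of the Conjecture and of Peters' theorem quoted in Section 2); no choice of spectrum of a single $L$ produces a Short basin. Shortness in this paper comes from shift--like maps $F_{\alpha_n}(z)=(\alpha_n z_k, z_2^d+\alpha_n z_1,\hdots,z_k^d+\alpha_n z_{k-1})$ whose linear parts degenerate super--exponentially, $|\alpha_n|\le a^{d^n}$ (Theorem \ref{Short C^k theorem} and Lemma \ref{condition on eta}); these are nowhere close to a fixed linear map. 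Second, the avoidance of $V$ in the paper is not an Andersen--Lempert/isotopy argument at all: Proposition \ref{pre theorem 1} applies a \emph{single} linear change of coordinates (Hubbard--Buzzard/Chirka) plus a translation placing $V$ inside $V_R^+$, and then Lemma \ref{filtration lemma} ($F_n(V^+)\subset V^+$, with norms staying $\ge R$) guarantees the basin never meets $V_R^+$, \emph{independently} of how small the later $\alpha_p$ are chosen. This decoupling is the whole point: the inductive work (Theorem \ref{pre theorem 3} and the proof of Theorem \ref{theorem 3}, run with $(k-1)$ graphs and the pseudoconvexity/transversality estimates of Step 3) is spent only on the boundary, and the variety never re-enters the picture.

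Your scheme, by contrast, makes avoidance a stage-by-stage condition (``the partial composition sends $V\cap P_n$ into $\mathbb C^k\setminus\overline{P_n}$'') and then must keep it and the smoothness alive simultaneously for all later stages. You acknowledge this interaction but resolve it only by asserting that both conditions are ``open'' and that the isotopy can be realized ``with support in the annular region $P_n\setminus P_{n-1}$.'' Neither assertion holds as stated: a holomorphic automorphism of $\mathbb C^k$ cannot be the identity on an open set and nontrivial elsewhere, so there is no exact annular support, only approximate identity on $P_{n-1}$ with quantitative errors that you would have to propagate through the entire induction; and ``$z$ is not in the basin'' is a condition on the infinite tail of the orbit, not an open condition on finitely many maps, so pushing $V\cap P_n$ off $\overline{P_n}$ at stage $n$ does not by itself prevent points of $V$ from being attracted later. (The paper's filtration is precisely the device that makes non-membership robust, and you never invoke it.) Finally, the ``piecewise'' in the conclusion does not come from seams between annuli: in the paper $\Omega=\bigcap_{i=2}^k G^i$ is an intersection of $(k-1)$ smooth pseudoconvex graph domains, and piecewise smoothness is the transversality $d\rho_p^{i_1}\wedge\cdots\wedge d\rho_p^{i_l}\neq 0$ of their defining functions, which has to be verified from the $C^p$--estimates on the $r_p^i$, another step absent from your outline.
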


\no {\it Acknowledgements:} The authors would like to thank Greg Buzzard, Josip Globevnik and Han Peters for their encouragement and very helpful pointers.

\section{Proof of Theorem \ref{theorem 5} and Proposition \ref{two maps}}

\no Let us recall the following from \cite{peters}. 

\medskip

\no \begin{defn}
Let $F$ be an automorphism of $\mbb C^k $, $k \ge 2$ having a fixed point at the origin such that $F'(0)$ is a lower triangular matrix. We say $F$ is {\it correctly ordered} if the diagonal entries of $F'(0)$, i.e., $\la_1,\hdots, \la_k$ (from upper left to lower right) satisfy the condition
\[ |\la_j ||\la_i| < |\la_l|,\]
for $1 \le l \le j \le k$ and any $1 \le i \le k.$
\end{defn}

\begin{defn}
A family $\cal{F}$ of {\it correctly ordered} automorphisms of $\mbb C^k$, $k \ge 2$ is said to be {\it uniformly attracting} if there exist $0< C< D< 1$ such that for every $z \in B^k(0;1)$ and $F \in \cal{F}$
\[ C\|z\| \le \|F(z)\| \le D\|z\|\] and there exists $0 < \xi < 1$ such that 
\begin{align}\label{eigen condition}
 |\la_j ||\la_i| < \xi |\la_l|
\end{align}
for $1 \le l \le j \le k$ and any $1 \le i \le k.$
\end{defn}

\begin{thm}[Peters, H]\label{theorem 9}
Let $\{F_n\}$ be a uniformly attracting sequence of automorphisms of $\mbb{C}^k$, $k \ge 2.$ Then the basin of attraction of $\{F_n\}$ at the origin is biholomorphic to $\mbb C^k.$
\end{thm}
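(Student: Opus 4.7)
The plan is to mimic the Rosay--Rudin proof for the autonomous case and produce the biholomorphism $\Psi : \mathbb C^k \to \Omega_{\{F_n\}}$ as a limit of normalized inverse compositions. The non-autonomous twist is that every estimate must be carried out uniformly in $n$; the uniformly attracting hypothesis (in particular the strict inequality \eqref{eigen condition} with a fixed $\xi < 1$) is precisely what supplies this uniformity.

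First I would perform a uniform Poincar\'e--Dulac normalization. Writing $A_n = F_n'(0)$ in lower triangular form with diagonal entries $\la_1^{(n)},\dots,\la_k^{(n)}$, iterating \eqref{eigen condition} gives
\[
\Big|\prod_m (\la_m^{(n)})^{\al_m}\Big| \;\le\; \xi^{|\al|-1}\, \max_l |\la_l^{(n)}|
\]
for every multi-index $\al$ with $|\al| \ge 2$, so there are no resonances at any order, and moreover the ``denominators'' $\la_l^{(n)} - \prod_m (\la_m^{(n)})^{\al_m}$ in the standard homological equation are bounded below by $(1-\xi)|\la_l^{(n)}|$ uniformly in $n$. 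Consequently, for any $N$ I can find polynomial automorphisms $\phi_n$ of degree $\le N$ with uniformly bounded coefficients such that
\[
\tilde F_n \;:=\; \phi_{n+1}\circ F_n\circ \phi_n^{-1} \;=\; A_n + R_n, \qquad \|R_n(z)\| \lesssim \|z\|^{N+1} \text{ on } B^k(0;1).
\]
I would choose $N$ large enough that $D^{N+1} < C$. Since the basin of $\{\tilde F_n\}$ differs from that of $\{F_n\}$ by the single automorphism $\phi_0$, it suffices to prove the theorem for the normalized sequence.

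Next, set $\tilde \Phi_n := \tilde F_n \circ \cdots \circ \tilde F_1$, $L_n := A_n A_{n-1} \cdots A_1$, and consider $\Psi_n := L_n^{-1} \circ \tilde \Phi_n$. A direct computation yields the telescoping identity $\Psi_n - \Psi_{n-1} = L_n^{-1}\,R_n(\tilde\Phi_{n-1})$, which together with $\|L_n^{-1}\| \le C^{-n}$ and $\|\tilde\Phi_{n-1}(z)\| \lesssim D^{n-1} \|z\|$ gives
\[
\|\Psi_n - \Psi_{n-1}\|_{B^k(0;r)} \;\lesssim\; \bigl(D^{N+1}/C\bigr)^{n}\, r^{N+1}.
\]
This is summable by the choice of $N$, and since the natural domain of $\Psi_n$ expands at rate $C^{-n}$, the limit $\Psi = \lim \Psi_n$ extends to an entire map $\mathbb C^k \to \mathbb C^k$ with $\Psi(0) = 0$ and $\Psi'(0) = I$. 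A standard endgame (Hurwitz for injectivity, and, for surjectivity, chasing any $w \in \Omega_{\{\tilde F_n\}}$ until its orbit enters the linearization neighbourhood and then pulling back through $\Psi_n^{-1}$) identifies $\Psi$ as a biholomorphism onto $\Omega_{\{\tilde F_n\}}$, which transports back to $\Omega_{\{F_n\}}$ via $\phi_0^{-1}$.

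The main obstacle is the uniform normalization step: one must ensure the $\phi_n$ carry a \emph{common} degree bound and \emph{uniformly} bounded coefficients across all $n$. Without the fixed $\xi < 1$ in \eqref{eigen condition}, non-resonance would hold for each individual $F_n$ but the denominators could degenerate as $n \to \infty$, destroying the uniform estimates on which Step~2 depends. Once the uniform normal form is in hand, the convergence and bijectivity arguments are a routine adaptation of Rosay--Rudin.
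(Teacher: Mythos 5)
The paper does not prove this theorem at all --- it is imported from \cite{peters} --- so the only comparison possible is with Peters's actual argument, and against that your proposal has a genuine gap at Step 1. The hypotheses do \emph{not} give non-resonance. Condition \eqref{eigen condition} asserts $|\la_j||\la_i|<\xi|\la_l|$ only for $l\le j$: it bounds a monomial $\la^{\al}$ against $|\la_l|$ only when some index occurring in $\al$ is $\ge l$. When every index occurring in $\al$ is strictly smaller than $l$, nothing is claimed, and resonances $\la_l=\la^{\al}$ of exactly this type are permitted by the ``correctly ordered'' condition. Concretely, take $F(z_1,z_2)=(\al z_1,\ \al^2 z_2+z_1^2)$ with $|\al|$ small: it is lower triangular with $\la_1=\al$, $\la_2=\al^2$, satisfies $|\la_2|^2=|\al|^4<\xi|\la_1|$ and the uniform contraction bounds (after a harmless rescaling), so the constant sequence $F_n=F$ is uniformly attracting; yet $\la_2=\la_1^2$ is resonant and the $z_1^2$ term in the second coordinate cannot be removed by any polynomial or local conjugacy. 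Hence there is no choice of $N$ and of automorphisms $\phi_n$ with $\tilde F_n=A_n+O(\|z\|^{N+1})$, your claimed uniform lower bound $(1-\xi)|\la_l|$ on the denominators $\la_l-\la^{\al}$ fails precisely on these lower--triangular monomials, and the telescoping comparison with $L_n=A_nA_{n-1}\cdots A_1$ --- the engine of your Step 2 --- collapses. (Your displayed inequality $|\la^{\al}|\le\xi^{|\al|-1}\max_l|\la_l|$, even where it holds, does not preclude $\la^{\al}$ from equalling one of the \emph{smaller} eigenvalues, which is what a resonance is.)

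This is not a repairable detail but the very point of the theorem: correct ordering guarantees only that all resonant monomials are lower triangular, so the right normal form is a lower triangular \emph{polynomial} map, not a linear one. Peters's proof conjugates each $F_n$ to such a triangular map of uniformly bounded degree (keeping the resonant terms), compares the given sequence with a sequence agreeing to sufficiently high order via the analogue of Lemma 14 of \cite{Survey}, and then must prove separately that the basin of a suitable sequence of triangular automorphisms is all of $\mbb C^k$ --- a nontrivial step involving, for instance, the boundedness of orbits of associated affine maps acting on the coefficients, which is exactly the ingredient the present paper invokes from \cite{peters} in its proof of Proposition \ref{two maps}. Your proposal omits this entire layer; once the (impossible) uniform linearization is assumed, the rest is routine, but the theorem cannot be reduced to the linear model $L_n$.
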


\no We will use the following version of this theorem which is valid when $k=2$.

\medskip

\no {\it Let $\{F_n\}$ be a uniformly attracting sequence of automorphisms of $\mbb{C}^2$  and let $\{\lambda_{1,n}\}$, $\{\lambda_{2,n}\}$ be the diagonal entries of $\{F_n'(0)\}$. Since $F_n'(0)$ is a lower triangular matrix by assumption, $\lambda_{1,n}$ and $\lambda_{2,n}$ are the eigenvalues of $F_n'(0)$.  If there exists $0<\xi<1$ such that
 \[
|\lambda_{1,n}|< \xi, \; |\lambda_{2,n}|< \xi \; \text{and}  \;|\lambda_{2,n}|^2\le \xi |\lambda_{1,n}|,
\]
the basin of attraction of the sequence $\{F_n\}$ at the origin is biholomorphic to $\mbb C^2$.}

\medskip

\no However, by assumption $\{F_n\}$ is a uniformly attracting sequence of automorphisms and hence there exists $0<\xi_1<1$ such that 
\[
|\lambda_{1,n}|< \xi_1, \; |\lambda_{2,n}|< \xi_1
\]
for each $n \ge 1$. Therefore (\ref{eigen condition}) reduces to
\begin{align}\label{condition}
|\lambda_{2,n}|^2\le \xi |\lambda_{1,n}|
\end{align} 
for some $\xi$, where $0< \xi_1< \xi< 1$.

\medskip

\no We will complete the proof of Theorem \ref{theorem 5} by showing the existence of a suitable $\xi$ so that (\ref{condition}) holds.

\begin{proof}[Proof of Theorem \ref{theorem 5}]

It can be seen that   
\[
DF^{p(k)}(0)= \begin{pmatrix}
    \alpha^{p(k)} & 0 \\
    0 & \beta^{p(k)}
  \end{pmatrix} \text{ and }
DG^{q(k)}(0)=\begin{pmatrix}
    \beta^{q(k)} & 0 \\
    0 & \alpha^{q(k)}
  \end{pmatrix} 
\]
and hence  
\[ DH_{p(k),q(k)}(0)=\begin{pmatrix}
    \alpha^{p(k)} \beta^{q(k)} & 0 \\
    0 & \alpha^{q(k)} \beta^{p(k)}
    \end{pmatrix}
    \]
    for all $p(k), q(k) \ge 1.$
    
 \medskip 

\no It is sufficient to find a uniform $0 < \xi < 1$ so that
\[
\vert \alpha \vert^{2q(k)} \vert \beta \vert^{2p(k)} < \xi \vert \alpha \vert^{p(k)} \vert \beta \vert^{q(k)}
\]
for all $k \ge 1$. First, some reductions are in order. If $q(k) = 0$ for $k \ge k_0 \ge 0$, then $F_n=F$ for all large enough $n$. In this case, Rosay--Rudin \cite{RR1} show that the basin of attraction of $\{F_n\}$ is all of $\mbb C^2.$  We can therefore assume that $q(k) \ge 1$ for infinitely many $k$'s. The same reasoning applies to $p(k)$ as well. Further, by the assumptions in Theorem 1.1 (i), there exists a uniform $M$ such that $0 \le q(k) \le M$ for all $k$. There are two possibilities for $p(k)$, namely $p(k) \le M$ or $p(k) > M$ depending on $k$. If $p(k) \le M$, we leave $H_{p(k), q(k)}$ undisturbed. Else, if $p(k) > M$, let $p(k) = MN(k) + R(k)$ where $0 \le R(k) < M$. Observe that
\begin{align*}
H_{p(k), q(k)} &= F^{R(k)} \circ \underbrace{F^M \circ  \cdots \circ F^M}_{N(k)} \circ H_{M, q(k)}\\
                        &=H_{R(k), 0} \circ \underbrace{H_{M, 0} \circ \cdots \circ H_{M, 0}}_{N(k)} \circ H_{M, q(k)}.
\end{align*}
In other words, $H_{p(k), q(k)}$ is the composition of maps, all of which are of the form $H_{\gamma(k), \delta(k)}$ with $\gamma(k) \le M$ and $\delta(k) \le M$. Therefore, it is possible to create a new sequence, still denoted by $\{H_{p(k), q(k)}\}$, from the given one so that 
\[
0 \le p(k) \le M \; \text{and} \; 0 \le q(k) \le M.
\]

\no We may assume, by rearranging the sequence $\{ H_{p(k), q(k)} \}$ if needed, that both $p(k)$ and $q(k)$ are maximal in their ranges for every $k \ge 1$. The boundedness of both $p(k)$ and $q(k)$ ensures that the sequence $\{H_{p(k), q(k)}\}$ has a uniform bound on the rate of contraction on a sufficiently small ball around the origin. 

\medskip

\no {\it Case 1:} Suppose that $2p(k) - q(k) \geq 0$. In this case, note that
\[
\vert \alpha \vert^{2q(k) - p(k)} \vert \beta \vert^{2p(k) - q(k)} \le \vert \alpha \vert^{p(k) + q(k)} \leq \vert \alpha \vert 
\]
where the last inequality holds since $p(k) \geq 1$. Here, in this case, we do not have to worry about how $\vert \alpha \vert$ and $\vert \beta \vert$ are related.

\medskip

\no {\it Case 2:} Suppose that $2p(k) - q(k) < 0$. In this case, let $0 < \eta < 1$ be such that
\[
\vert \alpha \vert^r \le \eta \vert \beta \vert < \vert \alpha \vert.
\]
Note that $2q(k) - p(k) > p(k) + q(k) \geq 1$. Then
\begin{align*}
\vert \alpha \vert^{2q(k)-p(k)} \vert \beta \vert^{2p(k)-q(k)} & \leq  \left(\eta \vert \beta \vert \right)^{(2q(k)-p(k))/r} \vert \beta \vert^{2p(k) - q(k)}\\
                                                                                                    & \leq  \eta^{(2q(k)-p(k))/r} \vert \beta \vert^{\big( (2q(k)-p(k))/r + 2p(k)-q(k) \big) }.
\end{align*}
Simplifying the exponent of $\vert \beta \vert$ and noting that $2q(k) - p(k) \geq 1$, the last term above is dominated by
\[
\eta^{1/r} \vert \beta \vert^{\frac{r-2}{r}\big( \left(2+\frac{3}{r-2}\right)p(k)-q(k) \big)}.
\]
Hence, if $(2 + \frac{3}{r-2})p(k) - q(k) > 0$ then
\[
\vert \alpha \vert^{2q(k)-p(k)} \vert \beta \vert^{2p(k)-q(k)} \leq \eta^{1/r}.
\]
It remains to note that any $\xi$ such that $\max(\vert \alpha \vert, \eta^{1/r}) < \xi < 1$ works. This completes the proof of Theorem 1.1 (i).

\medskip

\no Let 
\[
 L_{p(k),q(k)}=\tau \circ H_{p(k),q(k)} \circ \tau 
\]
where $\tau(z,w)=(w,z)$. Then
\[
DL_{p(k), q(k)}(0) = \begin{pmatrix}
{\alpha}^{q(k)} \beta^{p(k)} & 0\\
0 & \alpha^{p(k)} \beta^{q(k)}
\end{pmatrix}
\]
and if 
\[
2q(k) - p(k) \ge 0
\]
and $p(k)$ is bounded, a similar calculation applied to $L_{p(k),q(k)}$ shows that the basin of attraction of $\{H_{p(k), q(k)}\}$ is biholomorphic to $\mathbb C^2$.
 
\begin{figure}[H]
 \centering
\begin{subfigure}{.5\textwidth}
 \centering
\begin{tikzpicture}[scale=0.75]
\fill[black!20!white](0.01,0.01) -- (1.5,3) -- (4,3) -- (4,0.01) -- cycle;
\draw[<->](-.5,0)--(4,0);
\draw[<->](0,-.5)--(0,4);
\draw[thick](-.25,-.5)--(2,4);
\node[right] at (4,0) {$p$};
\node[above] at (0,4) {$q$};
\end{tikzpicture}
\caption{$(2 + \frac{3}{r-2})p(k)-q(k) \ge 0$}
\label{pq-1}
\end{subfigure}%
\begin{subfigure}{.5\textwidth}
 \centering
\begin{tikzpicture}[scale=0.75]
\fill[black!20!white](0.01,0.01) -- (0.01,4) -- (3,4) -- (3,1.5) -- cycle;
\draw[<->](-.5,0)--(4,0);
\draw[<->](0,-.5)--(0,4);
\draw[thick](-.5,-.25)--(4,2);
\node[right] at (4,0) {$p$};
\node[above] at (0,4) {$q$};
\end{tikzpicture}
\caption{$(2 + \frac{3}{r-2})q(k)-p(k) \ge 0$}
\label{pq-2}
\end{subfigure}
\caption{}
\end{figure}

\no The shaded regions in Figures (\ref{pq-1}) and (\ref{pq-2}) correspond to those sequences for which the  non--autonomous basins are biholomorphic to $\mbb C^2.$  

\medskip

\end{proof}


\begin{proof}[Proof of Proposition \ref{two maps}]
By the given condition, there exists $\tilde{\alpha}$ such that $|\alpha|< \tilde{\alpha}<1$ and $|\beta|>\tilde{\alpha}^k$, and each $F_n$ satisfies
\[ 
\tilde{\alpha}^k\|z\| \le \|F_n(z)\| \le \tilde{\alpha}\|z\|
\]
on a sufficiently small ball $B^k(0;r)$.

\medskip\no 
Now corresponding to the sequence $\{F_n\}$, we will associate another sequence of automorphisms $\{G_n\}$  defined as
$$G_n(z_1,z_2)=
\begin{cases} F(z_1,z_2); \text{ if }F_n=F, \\
(\beta z_1,\alpha z_2);\text{ otherwise. }
\end{cases}$$
 Let $X_0$ be an automorphism of $\mbb C^2$ of the form
\[
X_0(z_1,z_2)=\big(z_1,z_2+ X_{2,0}z_1^k\big) 
\]
where $ X_{2,0} \in \mbb C.$ Also let $\{X_n\}_{n \ge 1}$ be a sequence of polynomial endomorphisms of $\mbb C^2$ defined inductively by
\begin{align}\label{induction} 
X_{n+1}=[F_n \circ X_n\circ G_n^{-1}]_k
\end{align}
where $[\cdot]_k$ means that the degree $(k+1)$ terms are truncated from the expression.  A simple computation by expanding (\ref{induction}) gives
$$X_n(z_1,z_2)=\big(z_1,z_2+ X_{2,n}z_1^k\big)$$ for every $n \ge 1$, i.e., $\{X_n\}_{n \ge 0}$ is a sequence of lower triangular automorphisms of $\mbb C^2.$ It can be checked that

\begin{align*}
  X_{2,n+1}= \beta/\alpha^k X_{2,n}
\end{align*}
 if $F_n=F$ and 
\begin{align*}
  X_{2,n+1}= 1/{\alpha^{k-1}} X_{2,n}+1
\end{align*} 
otherwise. Let $\{\cal{A}_n\}_{n \ge 1}$ be a sequence of affine maps of $\mbb C$ defined as
\[ \cal{A}_{n+1}(z)=\begin{cases}\beta/\alpha^k z; \text { if }F_n=F\\ 1/\alpha^{k-1}z+1; \text{ otherwise. }\end{cases}\]
From \cite{peters}, there exists a $z_0 \in \mbb C$ such that $\cal{A}(n)(z_0)$ is bounded for every $n \ge 1$. If we let $X_{2,0}=z_0$, then $\{X_n\}$ is a bounded sequence of automorphisms of $\mbb C^2.$

\medskip\no 
By Lemma 14 from \cite{Survey}, it follows that the basin of attraction of $\{F_n\}$ at the origin is biholomorphic to the basin of attraction of the sequence $\{G_n\}.$ But the $G_n$'s are upper triangular maps with an attracting fixed point at the origin and hence the basin of attraction of $\{G_n\}$ at the origin is all of $\mbb C^2.$ This completes the proof.
\end{proof}


\section{Proof of Theorem \ref{theorem 2}}

	\no For $0 < a< 1$ and $d \ge 2$, consider the sequence of mappings 
	\[F_n(z_1,z_2,\hdots,z_k)=(\eta_n z_k, z_2^d+\eta_n z_1, z_3^d+\eta_n z_2,\hdots, z_k^d+\eta_n z_{k-1})\]
	where $|\eta_n| \le a^{d^n}$ for every $n \ge 0.$ The non--autonomous basin of attraction of this sequence, i.e., $\Omega_{\{F_n\}}$ will be a {\it Short} $\mbb{C}^k$. The arguments used to prove this fact are similar to Forn\ae ss 's proof in \cite{ShortC2}. However, we will briefly rewrite the proof for the sake of completeness.

\medskip\no 	
	 Let  $\Delta^k(0;R)$ denote the polydisk of radius $R$ at the origin and
	\[ F(n)(z):=F_n \circ \cdots \circ F_0(z)\]
	\begin{thm}\label{Short C^k theorem}
	The set $\Omega_{\{F_n\}}$ has the following properties:
	\begin{enumerate}
	\item[(i)] $\Omega_{\{F_n\}}$ is a non--empty open connected set.
	\item[(ii)] $\Omega_{\{F_n\}}=\cup_{j=1}^{\infty}\Omega_j$, $\Omega_j \subset \Omega_{j+1}$, and each $\Omega_j$ is biholomorphic to the unit ball $B^k(0;1)$ in $\mbb C^k.$
	\item[(iii)] The infinitesimal Kobayashi metric on $\Omega_{\{F_n\}}$ vanishes identically.
	\item[(iv)] There exists a plurisubharmonic function $\phi : \mbb C^k \to [\log a , \infty)$ such that
	\[\Omega_{\{F_n\}}=\{z \in \mbb C^k: \phi(z)<0\}.\]
	\end{enumerate}
	\end{thm}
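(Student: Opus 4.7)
The strategy is to mimic Forn\ae ss's original argument for Short $\mbb C^2$ in \cite{ShortC2}, adapted to this shift--like setting. The key observation is that each middle coordinate of $F_n$ carries a dominant degree--$d$ term $z_j^d$ with only a tiny linear correction $\eta_n z_{j-1}$, while the top coordinate is the small multiplier $\eta_n z_k$; the cyclic shift--like pattern forces the degree--$d$ behaviour to visit every coordinate within $k$ iterations.

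My first step would be to produce the plurisubharmonic function in (iv). Define
\[
\phi_n(z) = \max\bigl\{ |\pi_i(F(n)(z))|,\, |\eta_n|^{1/d^n} : 1 \le i \le k \bigr\},\qquad \phi(z) = \lim_{n\to\infty} d^{-n}\log \phi_n(z).
\]
Once $\phi_n(z)$ exceeds a fixed threshold, the $z_j^d$ terms dominate because $|\eta_n| \le a^{d^n}$ is extremely small, and one gets $\phi_{n+1}(z)$ comparable to $\phi_n(z)^d$; after normalizing by $d^{n+1}$ this yields Cauchy--type estimates and essentially monotone convergence, so $\phi$ is plurisubharmonic as the (essentially) decreasing limit of plurisubharmonic functions. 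The built--in bound $|\eta_n|^{1/d^n} \le a$ delivers $\phi \ge \log a$. Since $F(n)(z) \to 0$ if and only if $\phi_n(z) \to 0$, one obtains $\Omega_{\{F_n\}} = \{\phi < 0\}$, settling (iv). Property (i) then follows immediately: openness from upper--semicontinuity of $\phi$, and non--emptiness from a direct contraction estimate on a small polydisk around the origin, where every $F_n$ is genuinely contracting.

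For (ii), I would set $\Omega_j = (F(j))^{-1}\bigl(\Delta^k(0;r_j)\bigr)$ for an increasing sequence $r_j$ chosen so that $F_j(\Delta^k(0;r_j)) \subset \Delta^k(0;r_{j+1})$; the choice $r_{j+1} \sim r_j^d$ works once $r_j$ is large enough. Since each $F_n$ is a polynomial automorphism of $\mbb C^k$, each $\Omega_j$ is biholomorphic to a polydisk and hence to $B^k(0;1)$. The nested union exhausts $\Omega_{\{F_n\}}$ because $\phi(z) < 0$ forces $\phi_j(z) < r_j$ eventually, and connectedness of $\Omega_{\{F_n\}}$ is inherited from that of each $\Omega_j$.

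The hard step will be (iii), the vanishing of the Kobayashi metric, which was also the core of the argument in \cite{ShortC2}. The plan is to fix $z \in \Omega_{\{F_n\}}$ and a tangent vector $v$, push $z$ forward by $F(n)$ into a large polydisk $\Delta^k(0;R_n)$, construct an affine disk of huge Euclidean radius through $F(n)(z)$ in the direction $(F(n))'(z)v$ inside this polydisk, and then pull the disk back to $\Omega_{\{F_n\}}$ by $(F(n))^{-1}$. The resulting holomorphic disks in $\Omega_{\{F_n\}}$ pass through $z$ with unboundedly large derivatives, forcing the infinitesimal Kobayashi metric to vanish at $(z,v)$. The main obstacle is controlling the inverse maps $(F(n))^{-1}$ quantitatively; this is exactly where the cyclic shift--like structure is indispensable, since the $d$-th power domination in the forward direction translates into $d$-th root contraction for the inverse that reaches every coordinate in $k$ iterations, producing estimates strong enough to guarantee that the pulled--back disks stay inside $\Omega_{\{F_n\}}$.
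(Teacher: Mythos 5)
Your overall strategy (Forn\ae ss-style exhaustion, Green-type function, affine disks pulled back to kill the Kobayashi metric) is the paper's strategy, but two of your steps fail as stated. For (ii), your sets $\Omega_j=F(j)^{-1}\bigl(\Delta^k(0;r_j)\bigr)$ with radii $r_j\to\infty$, $r_{j+1}\sim r_j^d$, do \emph{not} exhaust the basin: they strictly overshoot it. Landing in a huge polydisk at stage $j$ says nothing about the forward orbit; e.g.\ take $w$ in the escaping region $V^+$ of the shift-like filtration with $R<\|w\|_\infty<r_0$ --- then $F(0)^{-1}(w)\in\Omega_0$ but its orbit stays in $V^+$ and never tends to $0$, so it is not in $\Omega_{\{F_n\}}$. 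The absorbing radius must be a \emph{fixed} $c<1$, and the actual work (the paper's Claim) is to show $F_{n+l}\bigl(\Delta^k(0;c_l)\bigr)\subset\Delta^k(0;c_{l+1})$ with $c_l=c(c')^l$ decreasing, so that entering $\Delta^k(0;c)$ forces convergence to $0$; this is what makes $\cup_n F(n)^{-1}(\Delta^k(0;c))=\Omega_{\{F_n\}}$ and also gives connectedness, which you borrow for (i). Moreover ``biholomorphic to a polydisk and hence to $B^k(0;1)$'' is false for $k\ge 2$ (Poincar\'e): the paper instead interleaves the Euclidean-ball preimages $U_n=F(n)^{-1}(B^k(0;c))$ between the polydisk preimages to get the exhaustion by biholomorphic images of the ball.

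For (iii), the same confusion recurs: if your affine disk of huge radius lives in a \emph{large} polydisk $\Delta^k(0;R_n)$, its pullback by $F(n)^{-1}$ is not known to lie in $\Omega_{\{F_n\}}$, and no estimate on the inverse maps can fix this, because membership in the basin is a statement about the \emph{forward} orbit under $F_{n+1},F_{n+2},\dots$ (again, the large polydisk meets $V^+$, whose preimage is disjoint from the basin). The missing idea is that no inverse estimates are needed: for $p\in\Omega$ and $\xi\in T_p\mbb C^k$, both $p_n=F(n)(p)$ and $\xi_n=DF(n)(\xi)$ tend to $0$, so for any fixed $R$ the affine disks $\zeta\mapsto p_n+\zeta R\xi_n$ eventually lie in the \emph{small} polydisk $\Delta^k(0;c)$, whose $F(n)$-preimage is inside $\Omega$; pulling back gives disks through $p$ with derivative $R\xi$, and $R$ arbitrary kills the metric. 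Finally, in (iv) your floor $|\eta_n|^{1/d^n}$ is mis-normalized: the bound $|\eta_n|^{1/d^n}\le a$ gives an upper bound on the floor term, not the lower bound $\phi\ge\log a$ (the paper's floor is $|\eta_n|$ itself), and your recursion $\phi_{n+1}\lesssim\phi_n^d$ would require $|\eta_{n+1}|\lesssim|\eta_n|^{d^2}$ for the floor, which is not assumed; also ``$F(n)(z)\to0$ iff $\phi_n\to0$'' does not by itself give strict negativity of $\phi$ on the basin --- the paper uses the superexponential contraction (equivalently a subaveraging argument) to upgrade $\phi\le 0$ to $\phi<0$ there.
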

	\begin{proof}
	Let $\Delta^k(0;c)$ denote the polydisk of polyradius $(c,\hdots,c)$, $0<c<1.$ Then 
	\[\| F_n(z)\|_{\infty} \le c^d+|\eta_n| c\] 
for every $z \in \Delta^k(0;c)$, i.e., $F_n(\Delta^k(0;c)) \subset \Delta^k(0;c^d+|\eta_n| c).$ Pick $c'$ such that $0<c<c'<1$ and let $c_l=c(c')^l.$ 
	
	\medskip 
	\no{\it Claim:} For $l \ge 0$, there exists $n$ large enough such that $F_{n+l}(\Delta^k(0;c_l)) \subset \Delta^k(0;c_{l+1}).$ 
	
	\medskip 
	\no Since $l+1 \le d^{l}$ for every $l \ge 0$,
	\begin{align*}
	\log (|\eta_{n+l}|)& = d^{n+l} \log a \le (l+1)d^n \log a \\
	& =\Big(\frac{l+1}{2}\Big) d^n \log a+ \Big(\frac{l+1}{2}\Big) d^n \log a\\
	&< \log c(1-c)+ (l+1) \log c'
	\end{align*}
	for $n$ sufficiently large. So we get 
	\begin{align*}
	|\eta_{n+l}| < c(1-c)(c')^{l+1}<c(c')^{l+1}-c^d(c')^{d^l}=c_{l+1}-c_l^d
	\end{align*}
	i.e.,
	\[c_l^d+c_l |\eta_{n+1}|<c_l^d+|\eta_{n+1}|<c_{l+1}.\]
	Hence the claim is true.
	
	\medskip\no 
	Now define
	\[\Omega_n=\{z \in \mbb C^k: F(n)(z) \in \Delta^k(0;c)\}.\]
	Then $\Omega_{n} \subset \Omega_{n+1}$ for sufficiently large $n$, i.e., for every $n \ge n_0$ and $l\geq 1$, $$F_{n+l} \circ \cdots\circ F_{n+1}(z) \to 0$$ uniformly on $\Omega_n.$ So we have $\cup_{n \ge n_0}\Omega_n \subset \Omega_{\{F_{n}\}}.$ But now note that if $z \in \Omega_{\{F_n\}}$, then $\|F(n)(z)\|<c$ for sufficiently large $n$, i.e., $z \in \Omega_n$ for $n$ large. Hence $\cup_{n \ge n_0}\Omega_n = \Omega_{\{F_{n}\}}.$ This proves statement (i).
	 
	\medskip\no 
	Let $U_n=\{ z \in \mbb C^k: \|F(n)z\|< c\}.$ Then for every $n \ge 0$, $U_n  \subset \Omega_n.$ Note that for $n \ge n_0 \ge 0$ and for every $z \in \Omega_n$, there exists $l \ge 1$ such that 
	\[ F(n+l)(z) \in \Delta^k(0;c) \subset B^k(0;c),\]
	i.e., $\Omega_n \subset U_{n+l} .$ Thus we have
	\[ \Omega=\cup_{n \ge n_0} \Omega_{n} \subset \cup_{m \ge 0} U_{n_0+l+m}\subset \cup_{m \ge 0} \Omega_{n_0+l+m} \subset\Omega,\]
 i.e., $\Omega=\cup_{m \ge 0} U_{n_0+l+m}$, which proves (ii).
 
 \medskip\no 
Pick a point $p \in \Omega$ and  $\xi \in T_p\Omega.$ Let $p_n=F(n)(p)$ and $\xi_n=DF(n)(\xi)$ for every $n \ge 0.$ Note that $p_n \to 0$, hence consequently, $\xi_n \to 0$ as $n \to \infty.$ Fix $R>0$, a sufficiently large value. Now define maps $\eta_n$ from the unit disc as follows:
\[ \eta_n(z)=p_n+zR\xi_n \;\text{for every } z \in \Delta(0;1).\]
Depending on $R>0$, there exists $n_0$ such that $\eta_n(\Delta(0;1)) \subset \Delta^k(0;c)$ for $n \ge n_0.$ Let $\eta=F(n_0)^{-1} \circ \eta_{n_0}.$ Note that $\eta(\Delta(0;1)) \subset \Omega_{n_0}\subset \Omega.$ Thus $\eta(0)=p$ and $\eta'(0)=R \xi.$ Since it is possible to construct a map $\eta$ for every $R>0$, it follows that the infinitesimal Kobayashi metric on $\Omega$ vanishes.

\medskip\no 
	Let $$F(n)(z)=(f_1^n(z),\hdots,f_k^n(z)),$$ i.e, $f_i^n(z)$ denotes the $i-$th component at the $n-$th stage. Define the functions $\phi_n$ as:
	\[ \phi_n(z)=\max\{|f_1^n(z)|,\hdots, |f_k^n(z)|, |\eta_n|\}.\]
	\begin{lem}
	Let $$ \psi_n(z)=\frac{1}{d^n}\log \phi_n(z).$$ Then $\psi_n \to \psi$ where $\psi$ is plurisubharmonic on $\mbb C^k.$
	\end{lem}
	\begin{proof}
	There are two cases to consider:
	
	\medskip\no 
	{\it Case 1:} Let $\phi_n(z) \le 1.$ Since $\eta_{n+1}=\eta_n^d$,
	\begin{align*}
	\phi_{n+1}(z) \le \max\{\phi_n(z)^d+ |\eta_{n+1}|, |\eta_{n+1}|\} \le 2 \phi_n(z)^d.
	\end{align*}
	{\it Case 2:}  Let $\phi_n(z) > 1.$ Then
	\begin{align*}
	\phi_{n+1}(z) \le \max\{\phi_n(z)^d+ |\eta_{n+1}| \phi_n(z), |\eta_{n+1}|\} \le 2 \phi_n(z)^d.
	\end{align*}
	Thus  for every $z \in \mbb C^k$ 
	\[ \frac{1}{d^{n+1}}\log \phi^{n+1}(z) \le \frac{1}{d^{n+1}}\log 2 + \frac{1}{d^n}\log \phi_n(z).\] Now define 
	\[ \Phi_n(z)=\frac{1}{d^n}\log \phi_n(z)+\sum_{j \ge n}\frac{1}{d^{j+1}}\log 2.\] Then $\Phi_n$ is a monotonically decreasing sequence of plurisubharmonic functions and hence its limit will be plurisubharmonic. But note that $\Phi_n \to \psi$ and hence the proof.
	\end{proof}
	\begin{lem}
	$\Omega_{\{F_n\}}=\{z \in \mbb C^k: \psi(z)<0 \}.$
	\end{lem}
	\begin{proof}
	Suppose $\psi(z)<0$, i.e., for large $n>0$ there exists $s<0$ such that
	\[\frac{1}{d^n}\log \phi_n(z)<s.\]
	This implies that $|f_j^n(z)|< e^{d^n s}$ for $1 \le j \le k$ and $n$ sufficiently large or equivalently $F(n)(z) \to 0$ as $n \to \infty.$ Thus $\Omega_{\{F_n\}} \subset \{\psi < 0\}.$ For the other inclusion suppose $ z \in \Omega_{\{F_n\}}.$ Then $\psi_n(z)<0$ for $n$ large which implies that $\psi(z) \le 0.$ Suppose $z(n_0)=F(n_0)^{-1}(0).$ Then $\phi_n(z(n_0))=|\eta_n|$ for every $n \ge n_0.$ Then for sufficiently large $n$,
	\begin{align*}
	 \psi(z(n_0)) &\le \frac{1}{d^n}\log \phi_n(z(n_0))+ \sum_{j \ge n} \frac{1}{d^{j+1}}\log 2\\
	 & =\frac{1}{d^n}\log \eta_n+\sum_{j \ge n} \frac{1}{d^{j+1}}\log 2< \log a+\sum_{j \ge n} \frac{1}{d^{j+1}}\log 2<0.
\end{align*}	
Since $\psi$ is plurisubharmonic and there exists $z(n_0) \in \Omega_{\{F_n\}}$ such that $\psi(z(n_0))<0$ and $\psi(z) \le 0$ for every $z \in \Omega_{\{F_n\}}$, the subaveraging property of plurisubharmonic functions shows that $\psi(z) < 0$ in $\Omega_{\{F_n\}}.$
	\end{proof}
\no	We will need the following observations about the sequence $\{F_n\}$:
\begin{enumerate}
\item[(i)] Except the first one every component of $F_n$ is of the form $z^d+ \eta_n w.$

\medskip\no 
 \item[(ii)] For $z, w \in \mbb C$, there exists $R>0$ such that if $|w|\le|z|$ and $|z|\ge R>1+a$ then for every $n >0$
	 \begin{align}\label{inequality 1}
	 |z^{d}+\eta_n w|>|z|^d-|\eta_n||w|>|z|(|z|^{d-1}-|\eta_n|)>|z| \ge R.
	 \end{align}
\item[(iii)] Each function $F_n$ is a $(k-1)-$composition of $(k-1)-$shift maps, i.e.,
\[ F_n=\underbrace{S_{k-1}^n \circ \cdots\circ S_1^n}_{(k-1)-\text{compositions}}\]
where, for $1 \le i \le k-2$
\[ S_i^n(z_1,\hdots,z_k)=(z_2,z_3,\hdots,z_{k-1},\eta_n z_1+z_2^d) \]
and
\[ S_{k-1}^n(z_1,\hdots,z_k)=(\eta_n^k z_2,z_3,\hdots,z_{k-1},\eta_n z_1+z_2^d).\]
\end{enumerate}
We will show that the filtration properties of shift--like maps in $\mbb C^k$ that were proved in \cite{BP}, extend to our case as well.

\medskip\no 
Let $V=\overline{\Delta^k(0;R)}$ and 
	  \[
	  V_i=\{ z \in \mbb C^k:\|z\|_{\infty}=|z_i|\ge R\}\]for $1 \le i \le k.$ Also let 
	   \[V^+=\bigcup_{i=2}^{k}V_i \text{ and } V^-=V_1.\] 
\no Observe that $V^+$, $V^-$ and $V$ form a disjoint collection where union is all of $\mbb C^k.$
\begin{lem}\label{filtration lemma}
$F_n(V^+) \subset V^+$ for every $n \ge 1.$
\end{lem}
\begin{proof}
Let $z \in V^+.$ Then $z \in V_i$ for some $2 \le i \le k.$ By (\ref{inequality 1})
$$|\pi_i \circ F_n(z)|=|z_i^d+\eta_n z_{i-1}|>|z_i|\ge R.$$ Now 
$$|\pi_1 \circ F_n(z)|=|\eta_n||z_k|<|z_k| \le |z_i|<|\pi_i \circ F_n(z)|.$$
Hence $F_n(z)$ is not contained in $V^-$ or $V$, i.e., $F_n(z) \in V^+.$
\end{proof}
\no Also from the above proof, if $z \in V^+$, then $(F(n))(z) \in V_{i_n}$ for every $n \ge 0$ and $2 \le i_n \le k$, i.e., $\|F(n)(z)\| \ge R.$ Hence $V^+ \cap \Omega_{\{F_n\}}= \emptyset$ which proves that $\Omega_{\{F_n\}}$ is not all of $\mbb C^k.$ 

\medskip\no 
Now we will show that $\psi$ is non--constant. Suppose not. Let $\psi \equiv \alpha<0$ on all of $\Omega_{\{F_n\}}$. Pick a point $z^0 \in \Omega_{\{F_n\}}.$ Choose $r>0$ such that $B^k(z^0,r) \subset \Omega_{\{F_n\}}$ and $\partial B^k(z^0;r) \cap \partial\Omega_{\{F_n\}} \neq \emptyset.$ Let $p \in \partial B(z^0;r) \cap \partial\Omega_{\{F_n\}}.$ Since $p \notin \Omega_{\{F_n\}}$ we know that $\psi(p) \ge 0.$

\medskip\no 
Now for sufficiently small $\ep>0$, the value of $\psi(p)$ should be bounded above by its average on the ball $B^k(p;\ep).$ But now $\psi \equiv \alpha<0$ on $B^k(p ; \ep) \cap \Omega_{\{F_n\}}$ and since $\psi$ is upper semicontinuous, i.e.,
$$\max_{z \in B^k(p;\ep)} \psi(z)<\psi(p)-\alpha$$
i.e.,
\begin{align}\label{pluriharmonic}
\psi(p)  \le \frac{1}{m(B^k(p;\ep))}\Big\{m\big(B^k(p;\ep)\cap \Omega_{\{F_n\}}\big) \alpha+(\psi(p)-\alpha)m\big(B^k(p;\ep) \setminus \Omega_{\{F_n\}}\big)\Big\}
\end{align}
where $m(B^k(p;\ep))$ is the $2k-$dimensional Lebesgue measure of $B^k(p;\ep).$ Now as $B^k(z^0;r) \subset  \Omega_{\{F_n\}}$ and $p \in \partial B^k(z^0;r)$, for $\ep_i \to 0$ 
\[ \frac{m(B^k(p; \ep_i) \cap B^k(z_0;r))}{m(B^k(p;\ep_i))} \to \frac{1}{2}.\] Choose $\ep>0$ such that
\[\frac{1}{2}-\rho \le \frac{m(B^k(p; \ep) \cap B^k(z_0;r))}{m(B^k(p;\ep))}\le \frac{1}{2}+\rho\] where $0<\rho<1/2.$ As $\psi(p)-\alpha>0$, \ref{pluriharmonic} reduces to
\begin{align*}
\psi(p)  &\le \frac{1}{m(B^k(p;\ep))}\Big\{m\big(B^k(p;\ep)\cap \Omega_{\{F_n\}}\big) \alpha+(\psi(p)-\alpha)m\big(B^k(p;\ep) \setminus \Omega_{\{F_n\}}\big)\Big\} \\
&\le \frac{1}{m(B^k(p;\ep))}\Big\{m\big(B^k(p;\ep)\cap B^k(z_0;r)\big) \alpha+(\psi(p)-\alpha)m\big(B^k(p;\ep) \setminus B^k(z_0;r)\big)\Big\} \\
&\le (\frac{1}{2}+\rho)\alpha+(\frac{1}{2}+\rho)(\psi(p)-\alpha) \le \psi(p)(\frac{1}{2}+\rho)< \psi(p).
\end{align*}  
This is a contradiction and hence $\psi$ is non--constant on $\Omega_{\{F_n\}}.$
	\end{proof}
	\no
Thus we have proved that the non--autonomous basin of attraction at the origin of the sequence  $\{F_n\}$ is a {\it Short} $\mbb C^k$.
\medskip\no 

\no Now Theorem \ref{theorem 2} follows as an application of Theorem \ref{Short C^k theorem} and Lemma \ref{filtration lemma}.
\begin{proof}[Proof of Theorem \ref{theorem 2}]
By Theorem \ref{Short C^k theorem}, there exists a positive real number $R>0$ and a {\it Short $\mbb C^k$}, say $\Omega$ such that $\Omega$ is properly contained in $\ov{V_R} \cup int(V_R^-)$, i.e., for every $z=(z_1,z_2,\hdots,z_k) \in \Omega$ either
\[ |z_i| \le R \text{ for every } 1 \le i \le k \text{ or } |z_k| > \max\{R,|z_i|: 1 \le i \le k-1\}.\]
For $1 \le i \le k-1$, let $\phi_i$ be the involution which interchanges the $k-$th coordinate and the $i-$th coordinate and fixes all others, i.e.,
$$\phi_i(z_1,\hdots,z_i, \hdots,z_k)=(z_1,\hdots,z_k,\hdots,z_i).$$ Let
 $$\widetilde{\Omega_i}=\phi_i(\Omega).$$ 
Note that each $\widetilde{\Omega_i}$ is a {\it Short $\mbb C^k$} and $\widetilde{\Omega_i} \subset \ov{V_R} \cup \rm{int}(V_i).$ Also, for a given constant $C \in \mbb C$, let $\cal{A}_C$ denote the affine map of $\mbb C^k$ given by
 \[ \cal{A}_C(z_1,z_2,\hdots,z_k)=(z_1,z_2,\hdots,z_{k-1},z_k+C).\]
For every $1 \le i \le k-1,$ define $\Omega_i$ as
 $$\Omega_i=\cal{A}_{3(i-1)R}(\widetilde{\Omega_i}).$$
\no{\it Claim:} For every $1 \le \alpha,\beta \le k-1$ and $\alpha \neq \beta$, $\Omega_\alpha \cap \Omega_\beta=\emptyset.$

\medskip\no 
Suppose $\Omega_\alpha \cap \Omega_\beta \neq \emptyset.$ Pick $z \in \Omega_\alpha \cap \Omega_\beta $. As $ \alpha\neq \beta$, without loss of generality one can assume that $\alpha > \beta.$ 

\medskip\no 
Since $z \in \Omega_\alpha \cap \Omega_\beta$, it follows that $\cal{A}_{3(\alpha-1)R}^{-1}(z) \in \ov{V_R} \cup \rm{int}(V_\alpha)$ and $\cal{A}_{3(\beta-1)R}^{-1}(z) \in \ov{V_R} \cup \rm{int}(V_\beta).$

\medskip\no 
{\it Case 1:}  If $\cal{A}_{3(\alpha-1)R}^{-1}(z) \in \ov{V_R}$, then
\[ |z_i| \le R \text{ and }|z_k-3(\alpha-1)R| \le R .\] In particular, $|z_\beta| \le R$ and
\begin{align*}
|z_k-3(\alpha-1)R|=|z_k-3(\beta-1)R+3(\alpha-\beta)R|\le  R. 
\end{align*} As $\alpha>\beta$, we have that 
\begin{align*} 
|z_k-3(\beta-1)R|\ge\big||z_k-3(\alpha-1)R|-3(\alpha-\beta)R\big|>R.
\end{align*} 
Hence $\cal{A}_{3(\beta-1)R}^{-1}(z) \notin V_R$, i.e., $\cal{A}_{3(\beta-1)R}^{-1}(z) \in \rm{int}(V_\beta).$
But this means
\[ |z_\beta|>\max\{R,|z_i|,|z_k-3(\beta-1)R|: 1 \le i \le k-1, i\neq \beta \}\] which is a contradiction. Therefore, $\cal{A}_{3(\alpha-1)R}^{-1}(z) \notin V_R.$

\medskip\no 
{\it Case 2:} If  $\cal{A}_{3(\alpha-1)R}^{-1}(z) \in \rm{int}(V_\alpha)$, then
\[ |z_\alpha|>\max\{R,|z_i|,|z_k-3(\alpha-1)R|: 1 \le i \le k-1, i\neq \alpha \}.\] Now if $\cal{A}_{3(\beta-1)R}^{-1}(z) \in \ov{V_R} $ then 
$|z_\alpha| \le R$ which is a contradiction. Hence $\cal{A}_{3(\beta-1)R}^{-1}(z) \in \rm{int}(V_\beta). $ But that will mean
\[ |z_\alpha|>|z_\beta| \text{ and } |z_\beta|>|z_\alpha| \]
which is again a contradiction. Thus the claim follows and this completes the proof.
\end{proof}

 \section{Properties of shift--like maps and proof of Theorem \ref{theorem 1}}
\no For $z \in \mbb C^k$, let $z'=(z_1,z_2)$ and $z''=(z_3, \hdots, z_k).$ Let $\widetilde{V}$ be be an algebraic variety of codimension $2$ in $ \mbb C^k$, $k \ge 3$ . For $\ep>0$, let
	\[ A_{\ep}=\{z \in \mbb C^k: \|z'\|_{\infty}<\ep \}\]
	and  
	\[B_{\ep}=\{z \in \mbb C^k: \|z'\|_{\infty}< \ep \|z''\|_{\infty}\}.\]
	Being algebraic there exists (see \cite{HuBu}, \cite{chirka}) a complex linear map $L_{\ep}$ such that $L_{\ep}(\widetilde{V}) \subset A_{\ep} \cup B_{\ep}.$
	
	\medskip\no 

\begin{prop}\label{pre theorem 1}
Let $\widetilde{V} \subset \mbb C^k$, $k \ge 3$ be an an algebraic variety of codimension $2.$ Then there exists a short $\mbb C^k$ that avoids $\widetilde{V}.$	
\end{prop}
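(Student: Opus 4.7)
The plan is to reduce, via the linear map $L_\epsilon$, to producing a Short $\mbb C^k$ that avoids the simpler set $A_\epsilon \cup B_\epsilon$, and then to obtain such a domain by translating the basin provided by Theorem \ref{Short C^k theorem}. Fix $0 < \epsilon < 1$ and choose the linear automorphism $L_\epsilon$ so that $L_\epsilon(\widetilde V) \subset A_\epsilon \cup B_\epsilon$. Since $L_\epsilon$ is a biholomorphism of $\mbb C^k$, the image under $L_\epsilon^{-1}$ of any Short $\mbb C^k$ is again a Short $\mbb C^k$ (all three defining properties---exhaustion by biholomorphic images of the ball, vanishing of the infinitesimal Kobayashi metric, and existence of a non--constant bounded plurisubharmonic function---are invariant under biholomorphism). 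It therefore suffices to construct a Short $\mbb C^k$ $\Omega^\ast$ with $\Omega^\ast \cap (A_\epsilon \cup B_\epsilon) = \emptyset$, after which $L_\epsilon^{-1}(\Omega^\ast)$ will be the required Short $\mbb C^k$ avoiding $\widetilde V$.

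To produce $\Omega^\ast$, apply Theorem \ref{Short C^k theorem} with $R > 1 + a$, so that Lemma \ref{filtration lemma} yields a Short $\mbb C^k$ $\Omega_0 = \Omega_{\{F_n\}}$ contained in $V \cup V^-$, where $V = \overline{\Delta^k(0;R)}$ and $V^- = \{z : |z_1| = \|z\|_\infty \ge R\}$. A direct computation shows that $V^-$ is already disjoint from $A_\epsilon \cup B_\epsilon$ whenever $0 < \epsilon < 1$: on $V^-$ one has $|z_1| \ge R > \epsilon$, which rules out $A_\epsilon$, while $|z_1| \ge \|z''\|_\infty$ combined with $\epsilon < 1$ gives $\|z'\|_\infty \ge |z_1| \ge \|z''\|_\infty > \epsilon \|z''\|_\infty$, which rules out $B_\epsilon$. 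The only remaining obstruction is the non--empty intersection $V \cap (A_\epsilon \cup B_\epsilon)$, which contains a neighborhood of the origin.

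To remove this last intersection, simply translate. Set $c = (0, b, 0, \ldots, 0)$ with $|b| > R(1 + \epsilon)$, and let $\Omega^\ast = \Omega_0 + c$. Since translation is a holomorphic automorphism of $\mbb C^k$, $\Omega^\ast$ is again a Short $\mbb C^k$, and by the previous paragraph $\Omega^\ast \subset (V + c) \cup (V^- + c)$. On $V + c$ the coordinate $z_2$ satisfies $|z_2| \ge |b| - R > \epsilon R$, while $|z_i| \le R$ for $i \ge 3$ (since $c$ is zero in those coordinates); hence $z \notin A_\epsilon$ (because $|z_2| > \epsilon$) and $z \notin B_\epsilon$ (because $|z_2| > \epsilon R \ge \epsilon \|z''\|_\infty$). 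On $V^- + c$ the inequalities $|z_1| \ge R$ and $|z_1| \ge |z_j|$ for $j \ge 3$ still hold, so the same computation that disposed of $V^-$ above applies verbatim. Thus $\Omega^\ast \cap (A_\epsilon \cup B_\epsilon) = \emptyset$, and $L_\epsilon^{-1}(\Omega^\ast)$ is the desired Short $\mbb C^k$ avoiding $\widetilde V$. The only genuine book--keeping is the translated--polydisk estimate, which is immediate once $|b|$ is taken sufficiently large; the conceptual content is that the filtration from Lemma \ref{filtration lemma} already places $\Omega_0$ in the region complementary to $A_\epsilon \cup B_\epsilon$ apart from a bounded piece near the origin that translation removes.
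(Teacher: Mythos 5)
Your proposal is correct and is essentially the paper's own argument: the paper likewise uses the filtration $V, V^{+}, V^{-}$ from Lemma \ref{filtration lemma} together with a translation in the second coordinate (there, composing $L_{\ep}$ with $\cal{A}_R(z)=z+(0,2R,0,\hdots,0)$ so that $A_{\ep}\cup B_{\ep}$ lands in $V^{+}$, which $\Omega_{\{F_n\}}$ avoids), and then pulls back by the affine/linear maps. Translating the basin by $(0,b,0,\hdots,0)$ instead of translating $A_{\ep}\cup B_{\ep}$ is the same computation viewed from the other side, so the two proofs coincide up to this relabeling.
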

\begin{proof}
Let $V$, $V^+$, $V^-$ and $R \gg 1$ be as obtained in the proof of Theorem \ref{Short C^k theorem}. There exists a change of coordinates $L_{\ep}$ such that $L_{\ep}(\widetilde{V}) \subset A_{\ep} \cup B_{\ep}$ with $0<\ep<R^{-1}.$ Now we further compose this map with an affine map 
\[\cal{A}_R(z_1,z_2,\hdots,z_k)=(z_1,z_2,\hdots,z_k)+(0,2R,0,\hdots,0).\]

\no {\it Claim:} $\cal{A}_R \circ L_{\ep}(\widetilde{V}) \subset V^+.$ 

\medskip\no 
It is enough to show that $\cal{A}_R(A_{\ep} \cup B_{\ep}) \subset V^+$.  Pick $z \in \cal{A}_R(A_{\ep} \cup B_{\ep}).$ Then 
\[ (z_1,z_2-2R,\hdots,z_k) \in A_{\ep} \cup B_{\ep} \]
and for this point $z'=(z_1,z_2-2R)$ and $ z''=(z_3, \hdots,z_k).$

\medskip\no 
{\it Case 1:} If $z' \in A_\ep$ then
\[ |z_2-2R|\le \ep \text{ and } |z_1|\le \ep ,\]
i.e., $|z_1|<|z_2|$ and $|z_2|>R$. Hence $z \in V^+.$ 

\medskip\no 
{\it Case 2:} If $z \in B_{\ep}$, then two cases arise. Either $|z''|\le R$, i.e., $|z'|<1$ and a similar analysis as above shows that $z \in V^+$. Otherwise, for some $3 \le i \le k$ if $|z_i|>R$, then
\[ |z_1| \le \ep |z_i|<|z_i|.\] Hence $z \notin V\cup V^-$, i.e., $z \in V^+.$
 
\medskip\no 
Thus for $k\geq 3$, $L_{\ep}^{-1} \circ \cal{A}_R^{-1}(\Omega_{\{F_n\}})$ is a {\it Short $\mbb C^k$} which does not intersect the algebraic variety $\widetilde{V}.$
\end{proof}
\begin{rem}
Note that Proposition \ref{pre theorem 1} does not apriori ensure that the {\it Short $\mbb C^k$} contains a Fatou--Bieberbach domain. The known technique to construct a {\it Short $\mbb C^2$} containing a Fatou--Bieberbach domain is to look at sub--level sets of the positive Green's function of a H\'{e}non map with an attracting fixed point. 
\end{rem}
\no To prove Theorem \ref{theorem 1}, we will study some properties of sub--level sets of Green's function associated with a shift--like maps.

\medskip
\no Let $S: \mbb{C}^k \to \mbb C^k$ be a polynomial shift--like map of degree $d$ and type $\nu.$ From \cite{BP} we know that the positive Green's Function associated to $S$ is defined as:
	\[ G^{+}(z)=\lim_{n \to \infty} \frac{1}{d^n}\log^{+}{\|S^{ \nu n}(z)\|}\]
	and the negative Green's Function associated to $S$ is
	\[ G^{-}(z)=\lim_{n \to \infty} \frac{1}{d^n}\log^{+}{\|S^{ -(k-\nu) n}(z)\|}.\]
Recall from \cite{BP} that
\[V_R=\Delta^k(0;R),\; V^+_R=\bigcup_{i=k-\nu+1}^k V_i\; \text{and}\; V^-_R=\bigcup_{i=1}^{k-\nu}V_i\] 
is a filtration and the set of zeros of the positive Green's function is contained in the union of $V_R$ and $ V^+_R,$ i.e.,
\[\{z \in \mbb C^k:G^+(z)=0\}=\{z \in \mbb C^k: S^n(z) \text{ is bounded as } n \to \infty\} \subset V_R \cup V^-_R.\]
\begin{prop}\label{Green}
Let 
\[ S(z_1,z_2,\hdots,z_k)=\big(z_2, \hdots,z_k, \delta(z_{k-\nu+1}^d-z_1)\big)\]
be a shift--like automorphism of $\mbb C^k$ of type $\nu$ and degree $d \ge 2$
where $\delta \in \mbb C^*.$ Then for every $c>0$ there exists $R>0$ such that for every $z \in V^+_R$, $G^+(z)\ge c$, i.e.,
\[ \{z \in \mbb C^k: G^+(z) < c\} \subset V_R \cup V^-_R.\]
\end{prop}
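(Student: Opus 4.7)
The plan is to produce an explicit lower bound for $G^+$ on $V^+_R$ that tends to $+\infty$ as $R\to\infty$. Combined with the Bedford--Pambuccian filtration $S(V_R^+)\subset V_R^+$ (valid for $R$ sufficiently large), this forces any prescribed sub-level set $\{G^+<c\}$ to be disjoint from $V^+_R$, which is exactly the stated inclusion $\{G^+<c\}\subset V_R\cup V_R^-$.

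The central estimate I would prove is the following: if $R^{d-1}\ge 2$, then for every $z\in V^+_R$,
\[
\|S^\nu(z)\|_\infty \ \ge\ \tfrac{|\delta|}{2}\,\|z\|_\infty^d.
\]
To establish this, fix $z\in V^+_R$ and let $j\in\{k-\nu+1,\dots,k\}$ realize $|z_j|=\|z\|_\infty\ge R$. Writing $w^{(m)}=S^m(z)$, the recursion $w^{(m)}_i=w^{(m-1)}_{i+1}$ (for $i\le k-1$) and $w^{(m)}_k=\delta\bigl((w^{(m-1)}_{k-\nu+1})^d-w^{(m-1)}_1\bigr)$ gives, by an immediate induction, $w^{(m-1)}_i=z_{i+m-1}$ for all $i\le k-(m-1)$ and all $1\le m\le \nu$. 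In particular $w^{(m-1)}_{k-\nu+1}=z_{k-\nu+m}$ and $w^{(m-1)}_1=z_m$, so
\[
w^{(m)}_k \ =\ \delta\bigl(z_{k-\nu+m}^d-z_m\bigr).
\]
Taking $m=j-k+\nu\in\{1,\dots,\nu\}$ and using $|z_i|\le|z_j|$ for every $i$ together with $|z_j|^{d-1}\ge R^{d-1}\ge 2$, one obtains $|w^{(m)}_k|\ge \frac{|\delta|}{2}|z_j|^d$. The remaining $\nu-m$ iterations act as pure left-shifts on the already-large value, so it is carried to position $k-(\nu-m)=j$ of $S^\nu(z)$, yielding the displayed bound.

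Since Bedford--Pambuccian give $S^\nu(V^+_R)\subset V^+_R$, the estimate can be iterated: for every $n\ge 1$,
\[
\|S^{\nu n}(z)\|_\infty \ \ge\ \bigl(\tfrac{|\delta|}{2}\bigr)^{(d^n-1)/(d-1)}\,\|z\|_\infty^{d^n}.
\]
Dividing by $d^n$, applying $\log$, and letting $n\to\infty$ produces
\[
G^+(z)\ \ge\ \log\|z\|_\infty \ +\ \frac{\log(|\delta|/2)}{d-1}\ \ge\ \log R \ +\ \frac{\log(|\delta|/2)}{d-1}
\]
for every $z\in V^+_R$. Given $c>0$, choose $R$ large enough that $R^{d-1}\ge 2$ and the right-hand side exceeds $c$; then $V^+_R\cap\{G^+<c\}=\emptyset$, which is the claim.

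The main obstacle is the coordinate bookkeeping behind the growth estimate: one has to check simultaneously that the dominant coordinate is effectively raised to its $d$-th power, that the correction term $|z_m|$ never swamps $|z_j|^d$ for the chosen $m\le\nu$, and that after the remaining shifts the large value lands back inside the forward block $\{k-\nu+1,\dots,k\}$, so that the estimate can be safely iterated against the Bedford--Pambuccian filtration.
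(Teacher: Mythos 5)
Your proposal is correct and follows essentially the same route as the paper: a one--step estimate $\|S^{\nu}(z)\|_\infty \ge \frac{|\delta|}{2}\|z\|_\infty^d$ on $V_R^+$ obtained by tracking the dominant coordinate through the shift structure, iterated against the Bedford--Pambuccian filtration, and then converted via $\frac{1}{d^n}\log^+$ into the lower bound $G^+(z)\ge \log\|z\|_\infty+\frac{\log(|\delta|/2)}{d-1}$ on $V_R^+$, after which $R$ is chosen so that this exceeds the given $c$. The paper's proof is the same argument (stated with minor notational slips in the iterate $S^{k-\nu}$ versus $S^{\nu}$ and a missing logarithm in its final inequality), and your closing choice of $R$, which forces $R^{d-1}>2/|\delta|$, is also exactly what guarantees the filtration property you invoke for the iteration.
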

\begin{proof}
The $(k-\nu)-$th iterate of $S$ is:
\[ S^{k-\nu}(z_1,z_2,\hdots,z_k)=\big(z_{\nu+1},\hdots,z_k, \delta(z_{k-\nu+1}^d-z_1),\hdots,\delta(z_k^d-z_{\nu})\big).\]
Suppose $z \in V^+.$ Then $z \in V_{i_0}$ for $k-\nu+1 \le i_0 \le k.$ For sufficiently large $R>0,$
\begin{align}\label{Inequality_green}
 \nonumber |\pi_{i_0}(S^{k-\nu}(z))|&=|\delta|(|z_{i_0}|^d-|z_{i_0-1}|)  \ge |\delta|(|z_{i_0}|^{d}-|z_{i_0}|) \\
 &\ge \frac{|\delta|}{2}|z_{i_0}|^d>|z_{i_0}|^{d-1}> \|z\|_{\infty}>R.
 \end{align}
  This can be rewritten as
 \begin{align}\label{greater-inequality}
 \|S^{k-\nu}(z)\|_{\infty} \ge|\pi_{i_0}(S^{k-\nu}(z))|\ge \frac{|\delta|}{2} \|z\|_{\infty}^{d}.
 \end{align}
 {\it Claim:} For every $n \ge 1$
 \begin{align}\label{main_green}
 \|S^{n(k-\nu)}(z)\|_{\infty} \ge \Big(\frac{|\delta|}{2}\Big)^{\sum_{i=0}^{n-1}d^{i}} \|z\|_{\infty}^{d^n}.
 \end{align}
 It is clear from the above calculations that the claim is true when $n=1.$ Now assume that (\ref{main_green}) is true for some $n.$ We will show that it is true for $n+1.$ Since $\frac{R |\delta|}{2} >1$ and $\|z\|_{\infty}>R$,
 \begin{align*}
 \|S^{n(k-\nu)}(z)\|_{\infty} \ge \Big(\frac{|\delta|}{2}\Big)^{\sum_{i=0}^{n-1}d^{i}} \|z\|_{\infty}^{d^n}> \|z\|_{\infty}>R
 \end{align*}
 i.e., $S^{n(k-\nu)}(z) \in V_{i_n}$ for some $1 \le i_n \le k.$ But now for every $1 \le j \le k-\nu$ $$|\pi_j \circ S^{n(k-\nu)}(z)| \le \| S^{(n-1)(k-\nu)}(z)\|_{\infty}$$ whereas from  (\ref{Inequality_green}) and (\ref{greater-inequality}), it follows that 
 \[\|S^{n(k-\nu)}(z)\|_{\infty}> \|S^{(n-1)(k-\nu)}(z)\|_{\infty},\] hence $S^{n(k-\nu)}(z) \in V_{i_n}$ for some $k-\nu+1 \le i_n \le k.$ Now from (\ref{Inequality_green}),
 \begin{align*}
 |\pi_{i_n}(S^{(n+1)(k-\nu)}(z))| &\ge \Big(\frac{|\delta|}{2}\Big) |\pi_{i_n}(S^{n(k-\nu)}(z))|^d \\
 &\ge \Big(\frac{|\delta|}{2}\Big) \Big\{\Big(\frac{|\delta|}{2}\Big)^{\sum_{i=0}^{n-1}d^{i}} \|z\|_{\infty}^{d^n}\Big\}^d \\
 & \ge \Big(\frac{|\delta|}{2}\Big)^{\sum_{i=0}^{n}d^{i}} \|z\|_{\infty}^{d^{n+1}}.
 \end{align*}
 Hence the claim is true.
 
 \medskip \no 
 Now there exists $0 < \tilde{\delta} \le 1$ such that (\ref{main_green}) can be further modified as
 \begin{align}\label{main_green 1}
 |\pi_{i_n}(S^{n(k-\nu)}(z))| \ge \tilde{\delta}^{\big(\sum_{i=0}^{n-1}d^{i}\big )} \|z\|_{\infty}^{d^{n}}.
 \end{align}
 \medskip\no 
 Since $\sum_{i=0}^{n-1}d^i <d^n$  for every $n \ge 1$ and $0<\tilde{\delta} \le 1$,
 (\ref{main_green 1}) can be modified as
 \begin{align}\label{modified-green}
 \|S^{n(k-1)}(z)\|_{\infty} > \big(\tilde{\delta}\|z\|_{\infty} \big)^{d^n}.
 \end{align}
 It follows from (\ref{modified-green}) that
\[ G^{+}(z)=\lim_{n \to \infty} \frac{1}{d^n}\log^{+}{\|S^{ (k-1) n}(z)\|}  \ge \tilde{\delta}\|z\|_{\infty} .\]
Now $R$ can be appropriately modified such that $\tilde{\delta}\|z\|_{\infty} > c$ (here $c$ is the given constant) for every $z \in V_R^+$, i.e.,
\[\{z \in G^+(z)>c\} \supset V_R^+\] or equivalently\[\{z \in G^+(z)<c \}\subset {V_R} \cup  V_R^- .\]
\end{proof}
\no 
As a corollary of Proposition \ref{Green}, Proposition \ref{pre theorem 1} we prove the following result:
\begin{thm}\label{pre theorem 2}
	For $k\geq 3$, let 
	 \[S(z_1,\hdots,z_k)=(z_2,\hdots,z_k, \delta(z_2^d-z_1))\]
be a shift--like automorphisms of type $k-1$ in $\mbb C^k$ for some $\delta \in \mbb C^*$ and $d \ge 2.$ For a given $c>0$ and an algebraic variety $\widetilde{V}$ of codimension 2, there exists an appropriate change of coordinates such that the $c-$sublevel set of the positive Green's function, i.e.,
\[\{z \in \mbb C^k: G^+(z)<c\}\]
does not intersect $\widetilde{V}$ in the new coordinate system.
\end{thm}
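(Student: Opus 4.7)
The strategy is to combine Proposition \ref{Green} with the linear-plus-affine coordinate change already used in Proposition \ref{pre theorem 1}. First, apply Proposition \ref{Green} with the prescribed $c > 0$: since the given shift-like map $S$ has type $\nu = k-1$, this yields some $R \gg 1$ such that
\[
\{z \in \mathbb{C}^k : G^+(z) < c\} \subset V_R \cup V_R^-, \qquad V_R^- = V_1, \qquad V_R^+ = \bigcup_{i=2}^{k} V_i.
\]
In particular, the $c$-sublevel set is disjoint from $V_R^+$, so it will suffice to produce a biholomorphism $T$ of $\mathbb{C}^k$ that carries $\widetilde V$ into $V_R^+$.

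To construct $T$, I would recycle the argument from Proposition \ref{pre theorem 1} almost verbatim. Pick $\epsilon \in (0, 1/R)$ and a complex linear isomorphism $L_\epsilon$ of $\mathbb{C}^k$ with $L_\epsilon(\widetilde V) \subset A_\epsilon \cup B_\epsilon$ (such an $L_\epsilon$ exists because $\widetilde V$ is algebraic of codimension two). Then compose with the affine translation
\[
\mathcal{A}_R(z_1, z_2, z_3, \ldots, z_k) = (z_1,\, z_2 + 2R,\, z_3, \ldots, z_k)
\]
and set $T = \mathcal{A}_R \circ L_\epsilon$. The same two-case verification used in Proposition \ref{pre theorem 1} (splitting on whether a point lies in $A_\epsilon$ or $B_\epsilon$, and in the latter case further splitting on whether $\|z''\|_\infty \leq R$ or $>R$) shows that $\mathcal{A}_R(A_\epsilon \cup B_\epsilon) \subset V_R^+$, hence $T(\widetilde V) \subset V_R^+$. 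Therefore
\[
T(\widetilde V) \cap \{G^+ < c\} \subset V_R^+ \cap (V_R \cup V_R^-) = \emptyset,
\]
so in the coordinate system obtained by applying $T^{-1}$, the $c$-sublevel set of $G^+$ does not meet $\widetilde V$.

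There is no serious obstacle here: the main observation is simply that the filtration produced by Proposition \ref{Green} for a type $(k-1)$ shift-like map, namely $V_R^+ = \bigcup_{i=2}^k V_i$ and $V_R^- = V_1$, is identical to the filtration exploited in Proposition \ref{pre theorem 1}, so the coordinate change designed there automatically places $\widetilde V$ on the correct side for the present purpose. The only minor care point is to ensure that boundary overlaps of the filtration pieces (where $|z_i| = R$) are not an issue; this is painless because the proof of Proposition \ref{Green} in fact yields $V_R^+ \subset \{G^+ > c\}$ for suitably large $R$, and by slightly enlarging $R$ one sees that the image $T(\widetilde V)$ lies strictly inside $V_R^+$.
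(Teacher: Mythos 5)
Your proposal is correct and is essentially the argument the paper intends: it derives Theorem \ref{pre theorem 2} exactly by combining Proposition \ref{Green} (which places $\{G^+<c\}$ inside $V_R\cup V_R^-$ for the type $(k-1)$ shift) with the coordinate change $\mathcal{A}_R\circ L_{\ep}$ from Proposition \ref{pre theorem 1} that moves $\widetilde V$ into $V_R^+$. Nothing further is needed.
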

\begin{lem}\label{condition on eta}
Let $\{F_n\}$ be the sequence of automorphisms as in Theorem \ref{Short C^k theorem}. If there exists $M>1$ such that $|\eta_{n+1}|< M |\eta_n|^d$, where $|M \eta_0|<1$, then $\Omega_{\{F_n\}}$ is a {\it Short $\mbb C^k$}.
\end{lem}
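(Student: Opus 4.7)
My plan is to bootstrap the recursive hypothesis $|\eta_{n+1}| < M|\eta_n|^d$ into a power-type bound of the shape $|\eta_n| \le a^{d^n}$ with $a \in (0,1)$, and then apply Theorem \ref{Short C^k theorem} verbatim, since the maps $\{F_n\}$ already have the form required there.

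The first step is a routine induction. Starting from $|\eta_1| < M|\eta_0|^d$ and iterating the recursive inequality, one shows
\[
|\eta_n| \le M^{1+d+\cdots+d^{n-1}}|\eta_0|^{d^n} = M^{(d^n-1)/(d-1)}|\eta_0|^{d^n}
\]
for every $n \ge 0$. The inductive step uses $|\eta_{n+1}| < M|\eta_n|^d \le M \cdot M^{d(d^n-1)/(d-1)}|\eta_0|^{d^{n+1}}$ together with the identity $1+d(d^n-1)/(d-1) = (d^{n+1}-1)/(d-1)$.

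The second step rearranges this into the form accepted by Theorem \ref{Short C^k theorem}. Setting $a := M^{1/(d-1)}|\eta_0|$, the inductive bound rewrites as
\[
|\eta_n| \le M^{-1/(d-1)}\,a^{d^n} \le a^{d^n},
\]
where the second inequality uses $M > 1$ to force $M^{-1/(d-1)} \le 1$. To verify that $a < 1$, I combine the hypothesis $|M\eta_0| < 1$ with the observation that $d \ge 2$ gives $1/(d-1) \le 1$, so $M^{1/(d-1)} \le M$ and consequently $a \le M|\eta_0| < 1$. Hence $a \in (0,1)$ and the power bound is valid for all $n \ge 0$.

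With the estimate $|\eta_n| \le a^{d^n}$ established, the sequence $\{F_n\}$ now satisfies exactly the hypothesis of Theorem \ref{Short C^k theorem}, which immediately yields that $\Omega_{\{F_n\}}$ is a Short $\mbb C^k$. I do not anticipate any genuine obstacle: the entire content of the lemma is the elementary observation that the normalization $|M\eta_0| < 1$ is precisely what is needed to convert one-step geometric decay of the coefficients into uniform super-exponential decay of the form $a^{d^n}$.
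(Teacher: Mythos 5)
Your proof is correct and follows essentially the same route as the paper: the same induction gives $|\eta_n| \le M^{1+d+\cdots+d^{n-1}}|\eta_0|^{d^n}$, after which one absorbs the constant to get a bound of the form $a^{d^n}$ with $a<1$ and quotes Theorem \ref{Short C^k theorem}. The only (immaterial) difference is your slightly sharper choice $a = M^{1/(d-1)}|\eta_0|$ in place of the paper's $a = M|\eta_0|$.
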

\begin{proof}
Let $M|\eta_0|=\alpha<1$. Note that $|\eta_0|< M|\eta_0|$ and $|\eta_1| \le  M|\eta_0|^d.$

\medskip\no 
{\bf Induction statement:} For every $n \ge 1$, $ |\eta_n|  \le M^{(1+d+\hdots+d^{n-1})}|\eta_0|^{d^n}.$

\medskip\no 
The above statement is true for $n=1.$ So assume it is true for some $n.$ Then
\[ |\eta_{n+1}| \le M |\eta_n|^d \le M\big(M^{1+d+\hdots+d^{n-1}}|\eta_0|^{d^n}\big)^d \le M^{1+d+\hdots+d^{n}}|\eta_0|^{d^{n+1}}.\]
\no 
From the induction statement it follows that
\[ |\eta_n| < (M|\eta_0|)^{d^n}=\alpha^{d^n}.\]
Hence from Theorem \ref{Short C^k theorem}, it follows that $\Omega_{\{F_n\}}$ is a {\it Short $\mbb C^k$}.
\end{proof}
\no Now we can complete the proof of Theorem \ref{theorem 1}.
\begin{proof}[Proof of Theorem \ref{theorem 1}.]
Choose $0<a<1$ and let $F_n$ be a sequence of automorphisms of $\mbb C^k$ defined as follows:
\begin{align*}
F_n(z_1,z_2,\hdots,z_k)&=\big(\eta_nz_k, z_2^2+\eta_n z_1, \hdots, z_k^2+\eta_n z_{k-1}\big)
\end{align*}
where $\eta_n=a^{{2^n}+1}.$ Then 
\[ \eta_{n+1}=\frac{1}{a} \eta_n^2 \text{ and } a\eta_0=a<1.\] From Lemma \ref{condition on eta},  it follows  that $\Omega_{\{F_n\}}$ is a {\it Short $\mbb{C}^k$}. Moreover from Proposition \ref{pre theorem 1}, for a given algebraic variety ${V}$ of codimension 2, there exists an appropriate linear change of coordinates (say $L$) of $\mbb C^k$ such that $L(\Omega_{\{F_n\}})$ does not intersect ${V}.$

\medskip\no 
{\it Claim:} $\Omega_{\{F_n\}}$ contains a Fatou--Bieberbach domain.

\medskip\no 
Consider the $(k-1)-$shift--like automorphism of $\mbb C^k$ given by 
$$F(z_1,z_2,\hdots,z_k)=(a z_k,z_2^2+az_1,\hdots, z_k^2+az_{k-1}).$$ Clearly the basin of attraction of $F$ at the origin (say $\Omega_F$) is a Fatou--Bieberbach domain by Rosay--Rudin (\cite{RR1}). 

\medskip\no 
For a given constant $C \neq 0$, let $l_C$ denote the linear map from $\mbb C^k$ to $\mbb C^k$ given by
\[ l_C(z_1,z_2,\hdots,z_k)=(Cz_1,Cz_2,\hdots, Cz_k).\]
Note that 
\[ F_n(z_1,z_2,\hdots,z_k)=l_{a^{2^{n+1}}} \circ F \circ l_{a^{-2^{n}}}(z_1,z_2,\hdots,z_k). \]
Then for $z \in l_a(\Omega_F)$, there exists $n$ sufficiently large such that
$\|F(n)(z)\|< a^{2^{n+1}}$, i.e.,
\[ \psi(z)<\log a < 0.\] Thus $l_a(\Omega_F)$ is a Fatou--Bieberbach domain contained in $\Omega_{\{F_n\}}$. The {\it Short} $\mbb C^k$ and Fatou--Bieberbach domain claimed in Theorem \ref{theorem 1} are then $L(\Omega_{\{F_n\}})$ and $L(l_a(\Omega_F))$ respectively.
\end{proof}
\section{Controlling the boundary of a {\it Short} \texorpdfstring {$\mathbb{C}^k$}{} on a fixed polydisk}
\no In this section, we will construct a {\it Short} $\mbb C^k$, $k \ge 3$ with some control on its boundary on a fixed polydisk. Theorem \ref{pre theorem 3} is the main statement here -- it shows the existence of a {\it Short} $\mbb C^k$, whose boundary is very close to $(k-1)-$faces of the polydisk. In addition, this {\it Short} $\mbb C^k$ is almost a cylinder along the remaining direction. 
We will use some ideas from \cite{Gl2}, \cite{Gl} and \cite{console}. 

\medskip\no 
First recall the following lemma from \cite{Gl} and \cite{console}.
\begin{lem}\label{smooth_function}
For a given $R\gg 1$, $\ep>0$ and $l \ge 1$, there exists $\alpha_0(\ep,R)>0$ such that if $0 \le \alpha \le \alpha_0$, then
\[ \big\{ (z,w)\in \mbb{C}^2:|z^2+\alpha w|=1 , |w| \le R\big\}=\big\{ (\phi_{\alpha}(\xi,w)\xi, w): \xi \in \partial \Delta, \phi_{\alpha} \in {C}^l\big(\partial \Delta \times \ov{\Delta(0;R)}\big)\big \}\]where $\phi_{\alpha} \in  {C}^l\big(\partial \Delta \times \ov{\Delta(0;R)}\big).                                                                                                                                                                                                                                                                                                                                                                                                                      $ Moreover, $\| \phi_{\alpha}-{1}\|_{ {C}^l(\partial \Delta \times \ov{\Delta(0;R)})} < \ep.$
\end{lem}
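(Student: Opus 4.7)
The plan is to parametrize the surface radially in the $z$-variable. For each $\xi \in \partial\Delta$ and $w \in \overline{\Delta(0;R)}$, I would define $\phi_\alpha(\xi,w)$ as the unique positive real number $t$ for which $(t\xi, w)$ lies on the surface, i.e.\ $|t^2\xi^2 + \alpha w| = 1$. Squaring and using $|\xi| = 1$ yields a quadratic in $u := t^2$, namely
\begin{equation*}
u^2 + 2\,\mathrm{Re}(\xi^2 \overline{\alpha w})\, u + \bigl(|\alpha w|^2 - 1\bigr) = 0,
\end{equation*}
with unique positive root
\begin{equation*}
\phi_\alpha(\xi,w)^2 \;=\; -\mathrm{Re}(\xi^2 \overline{\alpha w}) + \sqrt{\mathrm{Re}(\xi^2 \overline{\alpha w})^2 + 1 - |\alpha w|^2}.
\end{equation*}
For $|\alpha|R$ small, both the expression under the outer square root and the right-hand side are bounded below by a positive constant uniformly in $(\xi,w) \in \partial\Delta \times \overline{\Delta(0;R)}$, so taking the positive square root defines $\phi_\alpha \in C^\infty(\partial\Delta \times \overline{\Delta(0;R)})$.

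Next I would verify that $(\xi,w) \mapsto (\phi_\alpha(\xi,w)\xi, w)$ parametrizes the prescribed set bijectively. Injectivity is immediate from uniqueness of the positive root. For surjectivity, note that any $(z,w)$ in the set satisfies $z \neq 0$ (otherwise $|\alpha w| = 1$, contradicting $|\alpha|R$ small), so $z = |z|\cdot(z/|z|)$ with $z/|z| \in \partial\Delta$, and $t = |z|$ solves the same quadratic and therefore coincides with $\phi_\alpha(z/|z|, w)$.

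Finally, the $C^l$-smallness reduces to elementary joint smoothness plus compactness. At $\alpha = 0$ the formula collapses to $\phi_0 \equiv 1$, and the right-hand side depends jointly smoothly on $(\xi, w, \alpha)$ on the compact set $\partial\Delta \times \overline{\Delta(0;R)} \times \{|\alpha| \le \alpha_1\}$ for any fixed small $\alpha_1$. Hence every $(\xi,w)$-derivative of $\phi_\alpha - 1$ of order at most $l$ is continuous in $\alpha$ and vanishes at $\alpha = 0$, giving $\|\phi_\alpha - 1\|_{C^l(\partial\Delta \times \overline{\Delta(0;R)})} = O(|\alpha|)$; choosing $\alpha_0(\epsilon, R)$ sufficiently small delivers the claim. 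There is no genuine obstacle beyond bookkeeping: the mild care needed is to confirm that the discriminant stays bounded below and that the surface never pinches through $z = 0$, both automatic once $|\alpha|R < 1$.
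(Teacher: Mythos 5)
Your argument is correct and complete. Note that the paper does not actually prove this lemma: it cites \cite{Gl} for $l=1$ and Lemma 5.3.4 of \cite{console} for $l>1$, so what you wrote is a self-contained substitute rather than a retracing of the paper's proof. The underlying idea --- describing the set as a radial graph $z=\phi_\alpha(\xi,w)\xi$ over $\partial\Delta\times\overline{\Delta(0;R)}$ --- is the same as in those references, but where they obtain $\phi_\alpha$ from the implicit function theorem and then track its $C^{1}$ (respectively $C^{l}$) dependence, you solve the defining equation in closed form: $|t^2\xi^2+\alpha w|^2=1$ is a quadratic in $u=t^2$ whose constant term $|\alpha w|^2-1$ is negative once $\alpha R<1$, hence it has a unique positive root
\[
\phi_\alpha(\xi,w)^2=-\operatorname{Re}\bigl(\xi^2\overline{\alpha w}\bigr)+\sqrt{\operatorname{Re}\bigl(\xi^2\overline{\alpha w}\bigr)^2+1-|\alpha w|^2},
\]
and both the discriminant and this root are bounded below by positive constants uniformly on $\partial\Delta\times\overline{\Delta(0;R)}$. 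This buys everything at once: $\phi_\alpha$ is $C^\infty$, so all $l$ are handled simultaneously with no separate case $l>1$; the set equality follows because each ray through the origin meets the fibre of the set over $w$ in exactly one point and $z=0$ is never on it (since $|\alpha w|\le\alpha R<1$); and joint smoothness in $(\xi,w,\alpha)$ together with $\phi_0\equiv1$ and compactness gives $\|\phi_\alpha-1\|_{C^l(\partial\Delta\times\overline{\Delta(0;R)})}=O(\alpha)$, slightly more than the required $<\ep$. One caveat worth recording: your closed formula shows that $\phi_\alpha-1$ changes sign (e.g.\ $\phi_\alpha^2=1-\alpha R<1$ at $\xi=1$, $w=R$, $\alpha>0$), so only the two-sided bound of the lemma holds; the strict lower bound $1<\phi_\alpha$ asserted in Remark \ref{Remark 1} does not hold for this $\phi_\alpha$ --- and since positivity plus the set equality force $\phi_\alpha$ to equal the unique positive root above, it cannot hold for any admissible choice --- so later uses of that one-sided inequality should be rechecked against the two-sided estimate.
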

\no A proof of this for $l=1$ can be found in \cite{Gl}. For $l > 1$, this was observed in \cite{console} -- See Lemma 5.3.4 therein.

\medskip
\no It follows that if $0 < \alpha \le \alpha_0(\ep,R)$, then
{\small \[ \big\{ (z,w)\in \mbb{C}^2:|z^2+\alpha w|\le 1 , |w| \le R\big\}=\big\{ (t\phi_{\alpha}(\xi,w)\xi, w): 0 \le t\le 1, \xi \in \partial \Delta, \phi_{\alpha} \in  {C}^l(\partial \Delta \times \Delta(0;R))\big \}. \]}
\no 
As in Section 2, let
\begin{align}\label{automorphism}
F_{\alpha}(z_1,z_2,\hdots,z_k)=(\alpha z_k, z_2^d+\alpha z_1, z_3^d+\alpha z_2,\hdots, z_k^d+\alpha z_{k-1})
\end{align}
which is an automorphism of $\mbb C^k$ for any $\alpha >0$. For a sequence of automorphisms $\{F_i\}$ of $\mbb C^k$, let $F(n)$ and $F(m,n)$ denote the following maps
\[ F(n)(z)=F_n \circ \cdots \circ F_0(z) ,\; F(m,n)=F_{m+1}\circ \cdots\circ F_n=F(m)^{-1}\circ F(n)\] where $0 \le m \le n.$ Also for a given $\ep>0$ and a compact subset (say $K$) of $\mbb{C}^k$ we will denote the $\ep-$tube around $K$ by $N_{\ep}(K)$, i.e.,
\[ N_{\ep}(K)=\{z \in \mbb C^k: \text{dist}(z,K)<\ep\}.\]
\begin{lem}\label{neighbourhood}
For a given $R \gg 1$, $\ep>0$  and $l \ge 1$ there exists $\alpha_0(\ep,R)>0$ such  that for every $0 < \alpha \le \alpha_0$ there exists $\phi_{\alpha} \in  {C}^l\big(\partial \Delta \times \ov{\Delta(0;R)}\big)$ with the following properties:
\begin{itemize}
\item[(i)] Let $(z_1,z_2,\hdots,z_k) \in F_{\alpha}^{-1}\big(\Delta^k(0;1)\big)\cap \ov{\Delta^k(0;R)}$.  Then for every $ 2 \le i \le k$, the value of $z_i$ depends on $z_{i-1}$ recursively in the following way:  for every $z_1 \in \ov{\Delta(0;R)}$ and $2 \le i \le k$
\begin{align*}
z_i=t_i \phi_\alpha(\xi_i,z_{i-1}) \xi_i,
\end{align*}
where $t_i \in [0,1)$, $\xi_i \in \partial \Delta$.

\medskip
\item[(ii)] $F_{\alpha}^{-1}\big(\partial\Delta^k(0;1)\big)\cap \ov{\Delta^k(0;R)} \subset N_{k\ep}\big(\ov{\Delta(0;R)} \times \partial\Delta^{k-1}(0;{1})\big).$
\end{itemize}
\end{lem}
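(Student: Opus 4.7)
The plan is to apply Lemma \ref{smooth_function} separately in each coordinate direction $i \ge 2$, with $z_{i-1}$ playing the role of the parameter $w$ of that lemma. I would choose $\alpha_0 = \alpha_0(\ep,R) > 0$ small enough that three conditions hold simultaneously: (a) $\alpha_0 R < 1$, which forces $|w_1| = |\alpha z_k| < 1$ whenever $z \in \ov{\De^k(0;R)}$, so the boundary condition can only be saturated in coordinates $i \ge 2$; (b) the degree-$d$ version of Lemma \ref{smooth_function}, obtained by the same implicit function theorem argument applied to $p(z,w)=z^d+\alpha w$, produces $\phi_\alpha \in C^l(\pa \Delta \times \ov{\Delta(0;R)})$ with $\|\phi_\alpha - 1\|_{C^l} < \ep$; and (c) $(1+\alpha_0 R)^{1/d} \le 1 + \ep$, which gives the a priori bound $|z_i| \le 1 + \ep$ whenever $|z_i^d + \alpha z_{i-1}| \le 1$ and $|z_{i-1}| \le R$.

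Part (i) is then immediate. If $(z_1,\ldots,z_k) \in F_\alpha^{-1}(\De^k(0;1)) \cap \ov{\De^k(0;R)}$, then for each $i \ge 2$ the constraint $|z_i^d + \alpha z_{i-1}| < 1$, with $z_{i-1} \in \ov{\Delta(0;R)}$ viewed as a parameter, places $z_i$ in the set parametrized by the set-wise form of Lemma \ref{smooth_function} written in the excerpt just after its statement. This gives $z_i = t_i\phi_\alpha(\xi_i, z_{i-1})\xi_i$ with $t_i \in [0,1)$ and $\xi_i \in \pa \Delta$, and $z_1 \in \ov{\Delta(0;R)}$ remains as a free initial parameter; this is exactly the asserted recursion.

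For part (ii), let $z \in F_\alpha^{-1}(\pa \De^k(0;1)) \cap \ov{\De^k(0;R)}$. By (a), $|w_1| < 1$, so the boundary condition forces $|w_j| = 1$ for some $j \in \{2,\ldots,k\}$. For this $j$, Lemma \ref{smooth_function} represents $z_j = \phi_\alpha(\xi_j, z_{j-1})\xi_j$ with $|\xi_j| = 1$, so $|z_j| = |\phi_\alpha(\xi_j, z_{j-1})| \in (1-\ep, 1+\ep)$. For the remaining $i \ge 2$, $i \neq j$, condition (c) gives $|z_i| \le 1 + \ep$. Therefore $M := \max_{2 \le i \le k} |z_i|$ lies in $[1-\ep, 1+\ep]$. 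I would then exhibit an explicit nearest point $(z_2',\ldots,z_k') \in \pa \De^{k-1}(0;1)$ as follows: if $M \ge 1$, set $z_i' := z_i \min(1, 1/|z_i|)$ for every $i \ge 2$; if $M < 1$, pick an index $j^*$ with $|z_{j^*}| = M$, set $z_{j^*}' := z_{j^*}/|z_{j^*}|$, and $z_i' := z_i$ otherwise. A direct check shows that $(z_2',\ldots,z_k') \in \pa \De^{k-1}(0;1)$ and $|z_i - z_i'| \le \ep$ for each $i \ge 2$. Hence $(z_1, z_2',\ldots,z_k') \in \ov{\Delta(0;R)} \times \pa \De^{k-1}(0;1)$ is within Euclidean distance $\sqrt{k-1}\,\ep < k\ep$ of $z$, giving (ii).

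The real technical content is sealed inside Lemma \ref{smooth_function} (and its straightforward degree-$d$ extension); our proof is essentially bookkeeping that propagates the uniform $C^l$ guarantee across the $k-1$ nontrivial components of $F_\alpha$. The only care needed is to choose $\alpha_0$ uniformly in $i$ so that (a), (b) and (c) hold at once, which is automatic because each parameter $z_{i-1}$ ranges over the same fixed compact set $\ov{\Delta(0;R)}$ on which Lemma \ref{smooth_function} already supplies uniform bounds.
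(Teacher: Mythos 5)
Your proposal is correct and follows essentially the same route as the paper: choose $\alpha_0$ so small that $\alpha_0R<1$ (killing the first coordinate constraint) and apply Lemma \ref{smooth_function} coordinate--wise with $z_{i-1}$ as the parameter $w$, then use $\|\phi_\alpha-1\|<\ep$ to place the boundary preimage in the $k\ep$--tube. Your explicit nearest--point construction in (ii) merely spells out the distance estimate that the paper asserts via the face decomposition $H_{i,\alpha,1}$ versus $D_{i,1}$, and your remark on the degree--$d$ extension of Lemma \ref{smooth_function} addresses a point the paper passes over silently (its proof is written with $d=2$).
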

\begin{proof}
Let $\alpha_0'>0$ such that $\alpha_0'R< 1$. If $0 < \alpha< \alpha_0'$ then
\begin{align}\label{1}
 F_{\alpha}^{-1}\big(\Delta^k(0;1)\big) \cap \ov{\Delta^k(0;R)}=\big\{ (z_1,\hdots,z_k) \in \ov{\Delta^k(0;R)}: |z_i^2+\alpha z_{i-1}| < 1 \text{ for } 2 \le i \le k\big\}.
\end{align} 
\no From Lemma \ref{smooth_function}, there exists $\alpha_0''$ such that for every $0 < \alpha\le \min\{\alpha_0', \alpha_0''\}$
\[ z_i=t_i\phi_{\alpha}(\xi_i, z_{i-1}) \xi_i\] if $|z_{i-1}| \le R$ for every $2 \le i \le k$. Here $t_i \in [0,1)$, $\xi_i \in \partial \Delta$, $\phi_{\alpha} \in  {C}^l\big(\partial \Delta \times \ov{\Delta(0;R)}\big)$ and $$\|\phi_{\alpha}-{1}\|_{ {C}^l(\partial \Delta \times\ov{ \Delta(0;R)})} < \ep .$$ Since $|z_1| \le R$, $|z_i| \le {1}+\ep < R$ for every $2 \le i \le k$. Hence (i) follows.

 \medskip\no 
For a fixed $i$, $2 \le i \le k$, define
 \[ H_{i,\alpha,1}=\{(z_1,\hdots,z_k): |z_i^2+\alpha z_{i-1}|=1 \text{ and } |z_j^2+\alpha z_{j-1}| \le 1, |z_1| \le R \text{ for } 2 \le j \le k, j \neq i  \}. \] 
From (\ref{1}), the set $F_{\alpha}^{-1}\big(\partial\Delta^k(0;1)\big) \cap \ov{\Delta^k(0;R)}$ can be realized as
\[ F_{\alpha}^{-1}\big(\partial\Delta^k(0;1)\big) \cap \ov{\Delta^k(0;R)}= \bigcup_{i=2}^k H_{i,\alpha,1}. \]
Also $\ov{\Delta(0;R)} \times \partial\Delta^{k-1}(0;{1})$ can be written as the union of its faces, i.e.,
\[\ov{ \Delta(0;R)} \times \partial\Delta^{k-1}(0;{1})= \bigcup_{i=2}^{k} D_{i, {1}} \]
where 
\[ D_{i, {1}}=\{(z_1,\hdots,z_k): |z_i|={1}, |z_j| \le {1} \text{ and } |z_1| \le R \text{ for } 2 \le j \le k, j \neq i \}\]for a fixed $i$, $2 \le i \le k$. 
Now from the bound on $\phi_{\alpha}$ it follows that the distance between $H_{i, \alpha,1}$ and $D_{i,{1}}$ is less than $k\ep$. Thus (ii) follows.
\end{proof}
\begin{rem}\label{Remark 1}
The function $\phi_{\alpha}$ obtained in Lemma \ref{smooth_function} is actually a positive smooth function with 
\[ {1}< {\phi_{\alpha}}_{|\partial \Delta \times \ov{\Delta(0;R)}} \le 1+\ep.\]
\end{rem}
\begin{rem}\label{Remark 2}
Using this, the conclusion of Lemma \ref{neighbourhood} can be improved slightly, i.e.,
$$F_{\alpha}^{-1}\big(\Delta^k(0;1)\big)\cap \ov{\Delta^k(0;R)} \subset \big(\ov{\Delta(0;R)} \times \Delta^{k-1}(0;{1}+k\ep)\big).$$
\end{rem}
\no The next result is Lemma 5.3.5 from \cite{console}. We will include the proof for the sake of completeness. Before stating the result, we introduce certain notations. Suppose $F_n=F_{\alpha_n}$ is a sequence of automorphisms as in (\ref{automorphism}). For $n \ge 1$ and $0<c \le 1$, let 
\[ \Omega_{n,c}=\{z \in \mbb C^k: F(n)(z) \in \Delta^k(0;c) \}\] and \[ \Omega=\bigcup_{n=1}^{\infty} \Omega_{n,c} \text{ for } 0<c<1.\]
From Theorem \ref{theorem 1} we know that $\Omega$ is a {\it Short $\mbb C^k$} if $\alpha_{n+1} \le \alpha_{n}^2$ and $0 < \alpha_0 < 1$ and it is the non--autonomous basin of attraction of the sequence $\{F_n\}$ at the origin.
\begin{lem}\label{topological}
Fix $0<c<1$. For a given compact connected set $K$ and $\ep>0$, there exists $\tau=\tau(n,\ep,K)>0$ such that  if $1-\tau<c<1$ then $\partial\Omega_{n,c} \cap K  \subset N_{\ep}\big(\partial \Omega_{n,1} \cap K \big). $
\end{lem}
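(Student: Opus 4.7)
The plan is a short compactness--continuity argument applied to the continuous function $g(z):=\|F(n)(z)\|_\infty$ on the compact set $K$. The key preliminary observation is that $F(n)=F_n\circ\cdots\circ F_0$ is an automorphism of $\mathbb{C}^k$ (a composition of automorphisms), hence a homeomorphism, so $\partial\Omega_{n,s}=F(n)^{-1}(\partial\Delta^k(0;s))=g^{-1}(s)$ for every $s>0$. In particular, $\partial\Omega_{n,c}\cap K$ and $\partial\Omega_{n,1}\cap K$ are exactly the $c$- and $1$-level sets of $g$ inside $K$.

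With this reformulation in place, I would define the open neighborhood $U:=N_\ep(\partial\Omega_{n,1}\cap K)$ and the compact ``bad set'' $C:=K\setminus U$. By construction $C$ is disjoint from $g^{-1}(1)$, so $g(C)$ is a compact subset of $[0,\infty)$ that does not contain $1$. In the degenerate case $\partial\Omega_{n,1}\cap K=\emptyset$, the connectedness of $K$ together with continuity of $g$ forces $g(K)\subset[0,1)$ or $g(K)\subset(1,\infty)$, and again $1\notin g(K)\supset g(C)$; this is the only place the hypothesis that $K$ is connected enters. Setting
\[
\tau:=\mathrm{dist}\bigl(1,\,g(C)\bigr)>0
\]
(with the convention $\tau:=1$ when $C=\emptyset$), for every $c\in(1-\tau,1)$ the value $c$ lies outside $g(C)$. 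Therefore $g^{-1}(c)\cap C=\emptyset$ and hence
\[
\partial\Omega_{n,c}\cap K \;=\; g^{-1}(c)\cap K \;\subset\; K\setminus C \;\subset\; N_\ep(\partial\Omega_{n,1}\cap K),
\]
which is the required inclusion.

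I do not foresee a genuine obstacle: the argument is essentially the fact that a continuous function on a compact set attains its values at positive distance from any value it misses, combined with the identification $\partial\Omega_{n,s}=g^{-1}(s)$ coming from $F(n)$ being a global homeomorphism. The only piece of book-keeping is the edge case when the $1$-level set is empty on $K$, which is exactly where the connectedness hypothesis on $K$ comes into play.
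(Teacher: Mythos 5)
Your argument is correct, and it reaches the conclusion by a different mechanism than the paper. The paper works in the image of $F(n)$: it sets $P_1=F(n)(\partial\Omega_{n,1})$ and $V_\ep=F(n)\big(N_\ep(\partial\Omega_{n,1})\big)$, covers the compact set $P_1$ by finitely many balls $B^k(F(n)(z_i);r_{z_i}/k)$ contained in $V_\ep$, takes $\tau_1$ to be the smallest radius, uses that $P_c=\partial\Delta^k(0;c)$ lies within distance comparable to $1-c$ of $P_1=\partial\Delta^k(0;1)$ to conclude $P_c\subset V_\ep$, and then invokes the compactness and connectedness of $K$ a second time (a somewhat vaguely justified step producing $\tau_2$) to pass from $N_\ep(\partial\Omega_{n,1})$ to $N_\ep(\partial\Omega_{n,1}\cap K)$. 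You instead stay in the source and reduce everything to the scalar function $g(z)=\|F(n)(z)\|_\infty$: since $F(n)$ is a homeomorphism, $\partial\Omega_{n,s}=g^{-1}(s)$, the compact set $C=K\setminus N_\ep(\partial\Omega_{n,1}\cap K)$ misses the level $1$, and $\tau=\mathrm{dist}(1,g(C))>0$ does the job. This buys two things: it yields the intersected statement $\partial\Omega_{n,c}\cap K\subset N_\ep(\partial\Omega_{n,1}\cap K)$ in one step, avoiding the paper's less transparent second compactness/connectedness argument, and it exposes that connectedness of $K$ is not actually needed (even in your degenerate case $\partial\Omega_{n,1}\cap K=\emptyset$, the relation $g^{-1}(1)\cap K=\emptyset$ already gives $1\notin g(K)$ without any connectedness, and then $\partial\Omega_{n,c}\cap K=\emptyset$ for $c$ in the stated range). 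The paper's covering argument, on the other hand, also delivers the unrelativized inclusion $\partial\Omega_{n,c}\subset N_\ep(\partial\Omega_{n,1})$, which your level-set argument does not address but which is not part of the statement.
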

\begin{proof}
Let $P_1=F(n)(\partial\Omega_{n,1})$ and  $V_{\ep}= F(n) \big(N_{\ep}(\partial \Omega_{n,1})\big).$ Note that 
\[ V_{\ep}=\bigcup_{z \in \partial \Omega_{n,1}} F(n)(B^k(z;\ep)) ,\] and that $V_{\ep}$ is an open cover of $P_1.$ Then for every $z \in \partial \Omega_{n,1} \cap K$ there exists $r_z>0$ such that $B^k(F(n)(z);r_z) \in V_{\ep}.$ The collection $B^k(F(n)(z);r_z/k)$ as $z$ varies in $\Omega_{n,1}$ forms an open cover of $P_1$.
Since $P_1$ is compact there exists $N_0 \ge 1$ such that
\[ P_1 \subset \bigcup_{i=1}^{N_0} B^k(F(n)(z_i);{{r_{z_i}}/{k}}) \subset V_{\ep}. \]
Let $\tau_1 =\min \Big\{\frac{r_{z_i}}{k}: 1 \le i \le N_0 \Big\}$. Let 
\[ P_c=F(n)(\partial \Omega_{n,c}).\] If $1-\tau_1 < c< 1$, the distance between $P_c$ and $P_1$ is at most $k\tau_1 < \min\{ r_{z_i}:1 \le i \le N_0 \}.$ Thus $P_c \subset V_{\ep}$ and 
$\partial\Omega_{n,c} \subset N_{\ep}\big(\partial \Omega_{n,1} \big). $

\medskip\no 
Now by the connectedness of $K$, there exists $\tau_2>0$ such that if $1-\tau_2< c< 1$, the distance between $P_c \cap F(n)(K)$ and $P_1 \cap F(n)(K)$ is at most $k\tau_2$. Let $\tau=\min\{\tau_1,\tau_2\}.$ Hence, for every $c$ such that $1-\tau < c< 1$,
$$\partial\Omega_{n,c} \cap K \subset N_{\ep}\big(\partial \Omega_{n,1} \cap K \big).$$
\end{proof}

\no Let $n \ge 0$. Suppose there exist $(n+1)-$ real positive constants $\{\alpha_i:0 \le i \le n\}$ such that $\alpha_{i+1} \le \alpha_i^2$ for every $0 \le i < n$. Then this finite collection can be extended to a infinite sequence $\{\alpha_m\}$ such that
$\alpha_{m+1} \le \alpha_m^2$ for every $m \ge 0$. This extension is evidently not unique. However, the basin of attraction of the sequence of automorphisms $\{F_m\}$, where $F_m = F_{\alpha_m}$ is a {\it Short} $\mbb C^k$. Here, we will show that if there exist $(n+1)-$automorphisms $\{F_i: 0 \le i \le n\}$ such that we can control the following:

\begin{enumerate}
\item [(i)] the behaviour of $\Omega_{n,1}$ on a large polydisk,
\item[(ii)]  the behaviour of $\Omega_{i,c_i}$ for a collection of increasing real constants $\{c_i: 0 \le i \le n\}$, and
\item[(iii)] the behaviour of $\Omega$, where $\Omega$ is the basin of attraction of the sequence $\{F_m\}$ obtained by some appropriate extension of the collection $\{\alpha_i: 0 \le i \le n\}$,
\end{enumerate}

then the finite collection of automorphisms can be appended with $F_{n+1}$ such that the collection $\{F_i:0 \le i \le n+1\}$ will also satisfy the above three properties. Essentially our target is to show that we can inductively control appropriate these domains. This phenomenon is stated in the following result:

\begin{prop}\label{main proposition}
Suppose for some $n \in \mbb{N}$, there exist automorphisms $\{F_{\alpha_i}: 0 \le i \le n\}$ of $\mbb{C}^k$, with the following properties.
\begin{itemize}
\item[(a)] For a given  $R \gg 1$
\[  \Omega_{n,1} \cap \ov{\Delta^k(0;R)}=\{ (z_1,z_2,\hdots,z_k) \in \ov{\Delta^k(0;R)}: |z_i^n|<1 \text{ for every } 2 \le i \le k \}. \]
\item[(b)] For every $0 \le i \le n$, there exist $(n+1)-$increasing constants $0 < c_i < 1$ and $(n+1)-$sequences $\{\alpha_m(i)\}_{m \ge n+1}$ $$\big(\text{i.e.,} \{\alpha_{n+1+j}(0)\}_{j=0}^{\infty} ,\{\alpha_{n+1+j}(1)\}_{j=0}^{\infty}, \hdots, \{\alpha_{n+1+j}(n)\}_{j=0}^{\infty}\big),$$ such that if $F_m=F_{\alpha_m}$ where $$0<\alpha_m \le \min\{\alpha_m(i),\alpha_{m-1}^2: 0 \le i \le n\}$$ for $m \ge n+1$,  then
\[ \Omega_{i,c_i}\subset \Omega_{i+1, c_{i+1}} \subset \Omega_{m,c_n} \subset \Omega  \text{ for } 0 \le i \le n-1 \]
where $\Omega$ is the basin of attraction of $\{F_j \}_{j=0}^{\infty}$ at the origin.
\end{itemize}
Then for a given $\ep>0$, there exists $0<c_{n+1}(\ep)<1$ and $\alpha_{n+1}'>0$ such that
\begin{itemize}
\item[(i)] For $F_{n+1}=F_{\alpha_{n+1}}$ where $0< \alpha_{n+1}< \alpha_{n+1}'$, 
\[  \Omega_{n+1,1} \cap \ov{\Delta^k(0;R)}=\{(z_1,z_2,\hdots,z_k) \in \ov{\Delta^k(0;R)}:|z_i^{n+1}|<1 \text{ for every } 2 \le i \le k\}\]
and
\[\partial\Omega_{n+1,1}\cap \ov{\Delta^k(0;R)}\subset N_{\ep}\big(\partial \Omega_{n,1} \cap \ov{\Delta^k(0;R)}\big).\]
\item[(ii)] $\partial \Omega_{n+1,c_{n+1}} \cap \ov{\Delta^k(0;R)}$ is contained in the $\ep-$neighbourhood of $\partial \Omega_{n+1,1} \cap \ov{\Delta^k(0;R)}$, i.e.,
\[\partial \Omega_{n+1,c_{n+1}} \cap \ov{\Delta^k(0;R)}\subset N_{\ep}\big(\partial \Omega_{n+1,1} \cap \ov{\Delta^k(0;R)}\big) \text{ and } c_{n+1} \ge c_n.\]
\item[(iii)] There exists a sequence of positive real numbers $\{\alpha_m(n+1)\}_{m \ge n+2}$ such that if $F_m=F_{\alpha_m}$ for every  $m \ge n+2$ where $$0 < \alpha_m \le \min\{\alpha_{m-1}^2, \alpha_m(i): 0 \le i \le n+1\}$$ and $\Omega$ is the basin of attraction of $\{F_j\}_{j=0}^{\infty}$ at the origin, then  
\[ \Omega_{i,c_{i}}\subset \Omega_{i+1,c_{i+1}} \subset \Omega_{m, c_{n+1}} \subset \Omega \text { for every }0 \le i \le n.\]
\end{itemize}
\end{prop}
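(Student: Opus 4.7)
\medskip\noindent
\textbf{Proof proposal.} The strategy is to view $F_{n+1}=F_{\alpha_{n+1}}$ as a small perturbation of the map that bends the unit polydisk into a cylinder in the last $k-1$ coordinates, use Lemma~\ref{neighbourhood} to control $F_{n+1}^{-1}(\Delta^k(0;1))$ to within $O(\alpha_{n+1})$ of that cylinder, and then pull back by the biholomorphism $F(n)$. Hypothesis~(a) is exactly what lets this translation succeed: on $\ov{\Delta^k(0;R)}$ the description of $\Omega_{n,1}$ already involves only the $2$nd through $k$--th coordinates, so the pullback of the cylinder coincides with $\Omega_{n,1}\cap\ov{\Delta^k(0;R)}$. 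Before starting I fix an auxiliary radius $R'>0$ with $F(n)\bigl(\ov{\Delta^k(0;R)}\bigr)\subset\ov{\Delta^k(0;R')}$, possible because $F(n)$ is continuous and $\ov{\Delta^k(0;R)}$ is compact, and let $L>0$ denote a Lipschitz constant for $F(n)^{-1}$ on $\ov{\Delta^k(0;R')}$; both $R'$ and $L$ depend only on the previously fixed data $\alpha_0,\dots,\alpha_n$.

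\medskip\noindent
For part~(i), I apply Lemma~\ref{neighbourhood} to $F_{\alpha_{n+1}}$ with radius $R'$ and tolerance $\ep/(kL)$, obtaining a threshold $\alpha'_{n+1}>0$. For every $0<\alpha_{n+1}<\alpha'_{n+1}$, Lemma~\ref{neighbourhood}(ii) gives
\[
F_{n+1}^{-1}\bigl(\partial\Delta^k(0;1)\bigr)\cap\ov{\Delta^k(0;R')}\subset N_{\ep/L}\bigl(\ov{\Delta(0;R')}\times\partial\Delta^{k-1}(0;1)\bigr).
\]
Shrinking $\alpha'_{n+1}$ further so that $\alpha'_{n+1}R'<1$ makes $|z_1^{n+1}|=|\alpha_{n+1}f_k^n(z)|\le\alpha_{n+1}R'<1$ automatic on $\ov{\Delta^k(0;R)}$, which produces the equality in~(i). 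Pulling the displayed inclusion back by $F(n)^{-1}$ and using the Lipschitz bound $L$ converts the $\ep/L$--tube in the $w$--variables into an $\ep$--tube in the $z$--variables, while hypothesis~(a) identifies the pullback of the limiting cylinder with $\partial\Omega_{n,1}\cap\ov{\Delta^k(0;R)}$; this yields the boundary inclusion of~(i).

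\medskip\noindent
For part~(ii), I apply Lemma~\ref{topological} to the finite family $\{F_i:0\le i\le n+1\}$ with $K=\ov{\Delta^k(0;R)}$ (compact and connected), obtaining $\tau=\tau(n+1,\ep,K)>0$; any choice $c_{n+1}\in\bigl(\max(c_n,\,1-\tau),\,1\bigr)$ gives the required neighbourhood inclusion and ensures $c_{n+1}\ge c_n$. For part~(iii), the new containment $\Omega_{n,c_n}\subset\Omega_{n+1,c_{n+1}}$ holds provided $\alpha_{n+1}\le 1-c_n^{d-1}$, since $\|w\|_\infty\le c_n$ implies $\|F_{n+1}(w)\|_\infty\le c_n^d+\alpha_{n+1}c_n\le c_n\le c_{n+1}$; this is an extra smallness condition on $\alpha_{n+1}$ which I absorb into the threshold $\alpha'_{n+1}$ of Step~1. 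For future iterates I set $\alpha_m(n+1):=1-c_{n+1}^{d-1}$ for every $m\ge n+2$; the same invariance calculation shows $F_m\bigl(\Delta^k(0;c_{n+1})\bigr)\subset\Delta^k(0;c_{n+1})$ whenever $\alpha_m\le\alpha_m(n+1)$, hence $\Omega_{n+1,c_{n+1}}\subset\Omega_{m,c_{n+1}}$. Finally, the smallness rule $\alpha_m\le\alpha_{m-1}^2$ forces doubly exponential decay of $\alpha_m$, so the hypotheses of Lemma~\ref{condition on eta} apply to the tail and drive $F(j)(z)\to 0$ on the invariant polydisk $\Delta^k(0;c_{n+1})$, giving $\Omega_{m,c_{n+1}}\subset\Omega$.

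\medskip\noindent
The main obstacle I expect is the bookkeeping in part~(i): Lemma~\ref{neighbourhood} phrases its conclusion in the $w$--coordinates on the target side of $F_{n+1}$, whereas the statement to be proved lives in the $z$--coordinates. Translating one into the other requires absorbing the factor $k$ and the Lipschitz constant $L$ of $F(n)^{-1}$ into the tolerance fed to Lemma~\ref{neighbourhood}, and it is essential that $R'$ and $L$ be computed \emph{before} $\alpha_{n+1}$ is chosen, so that they are genuinely prior data. Beyond this, the four smallness constraints on $\alpha_{n+1}$ (from Lemma~\ref{neighbourhood}, from $\alpha_{n+1}R'<1$, from $\alpha_{n+1}\le\alpha_n^2$, and from $\alpha_{n+1}\le 1-c_n^{d-1}$) are each open conditions at $0$ and can be imposed simultaneously, so the logical order is: compute $R'$ and $L$, determine all four thresholds, pick $\alpha_{n+1}$ below their minimum, and only then choose $c_{n+1}$ from Lemma~\ref{topological}.
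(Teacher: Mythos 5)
Your proposal is correct and follows essentially the same route as the paper's proof: Lemma~\ref{neighbourhood} combined with uniform continuity (your Lipschitz bound) of $F(n)^{-1}$ on $F(n)\bigl(\ov{\Delta^k(0;R)}\bigr)$ together with hypothesis (a) for part (i), Lemma~\ref{topological} for part (ii), and smallness of the tail $\alpha_m$ for part (iii). The only minor deviation is in (iii), where you force forward invariance of $\Delta^k(0;c_{n+1})$ by the explicit choice $\alpha_m(n+1)=1-c_{n+1}^{d-1}$ and then use $\alpha_m\to 0$ (the contraction argument is really the one from the proof of Theorem~\ref{Short C^k theorem}, not Lemma~\ref{condition on eta}), whereas the paper cites the shrinking--polydisc estimate from that proof; both give $\Omega_{m,c_{n+1}}\subset\Omega$, and your explicit extra constraint $\alpha_{n+1}\le 1-c_n^{d-1}$ makes precise what the paper leaves implicit.
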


\begin{proof}
Fix $R_n>1$ such that $F(n)\big( \ov{\Delta^k(0;R)}\big)\subset\subset \ov{\Delta^k(0;R_n)}.$ By continuity of $F(n)^{-1}$, for $\ep>0$ there exists $\delta>0$ such that for $z,z' \in \ov{\Delta^k(0;R_n)}$, 
\[  \|F(n)^{-1}(z)-F(n)^{-1}(z') \|< \ep \] whenever $\|z-z'\|< \delta/k.$ 

\medskip\no 
By Lemma \ref{neighbourhood}, there exists $\alpha_{n+1}'=\alpha_{n+1}'(\delta,R_n)$ such that if $0<\alpha_{n+1} \le \alpha_{n+1}'$ and $F_{n+1}=F_{\alpha_{n+1}}$, then
{\small \begin{align*}
 F_{n+1}^{-1}\big(\Delta^k(0;1)\big) \cap \ov{\Delta^k(0;R_n)}=\big\{ (z_1,\hdots,z_k) \in \ov{\Delta^k(0;R_n)} : |z_i^2+\alpha_{n+1} z_{i-1}| < 1 \; \text{for } 2 \le i \le k, \;|z_1| \le R_n \big\} 
 \end{align*}
 }
 and 
 \begin{align}\label{2}
 F_{n+1}^{-1}(\partial\Delta^k(0;1))\cap \ov{\Delta^k(0;R_n)} \subset N_\delta\big(\ov{\Delta(0;R_n)} \times \partial\Delta^{k-1}(0;1)\big).
\end{align}
Looking at the proof of the above fact, we see that the $\delta-$neighbourhood is obtained by keeping the $z_1-$coordinate fixed. Hence (\ref{2}) can be rewritten as
\begin{align} \label{set}
F_{n+1}^{-1}(\partial\Delta^k(0;1))\cap F(n)(\ov{\Delta^k(0;R)}) \subset N_\delta\big(\pi_1 \circ F(n) (\ov{\Delta^k(0;R)}) \times \partial\Delta^{k-1}(0;1)\big).
\end{align}
This exactly means that
\[ \Omega_{n+1,1} \cap \ov{\Delta^k(0;R)}=\{ (z_1,z_2,\hdots,z_k) \in \ov{\Delta^k(0;R)}: |z_i^{n+1}|<1 \text{ for every } 2 \le i \le k \}. \]
 
\medskip\no 
Since the automorphisms $\{F_i\}_{i=0 }^n$ satisfy condition (a),
\begin{align*}
 F(n)^{-1}\big(\mbb{C} \times \Delta^{k-1}(0;1)\big)\cap \ov{\Delta^k(0;R)}&= \Omega_{n,1} \cap \ov{\Delta^k(0;R)} 
 \end{align*}
 i.e.,
 \begin{align*}
 F(n)^{-1}\big(\mbb{C} \times \partial\Delta^{k-1}(0;1)\big)\cap \ov{\Delta^k(0;R)}&= \partial\Omega_{n,1} \cap \ov{\Delta^k(0;R)}.
 \end{align*}
 As $\ov{\Delta^k(0;R)}=F(n)^{-1} \circ F(n)\big(\ov{\Delta^k(0;R)}\big)$, the above expression further simplifies as
 \begin{align*}
 F(n)^{-1}\big(\mbb{C} \times \partial\Delta^{k-1}(0;1)\cap F(n)(\ov{\Delta^k(0;R)})\big)&= \partial \Omega_{n,1} \cap \ov{\Delta^k(0;R)}.
 \end{align*}
Now by continuity of $F(n)$, we have
\[ F(n)^{-1}\Big( N_\delta\big(\mbb{C} \times \partial\Delta^{k-1}(0;1)\big)\cap F(n)(\ov{\Delta^k(0;R)})\Big) \subset N_{\ep}\big(\partial \Omega_{n,1} \cap \ov{\Delta^k(0;R)}\big),\]
which using (\ref{set}) says that
\[ \partial\Omega_{n+1,1} \cap \ov{\Delta^k(0;R)}\subset  F(n)^{-1}\Big( N_\delta\big(\mbb{C} \times \partial\Delta^{k-1}(0;1)\big)\cap F(n)(\ov{\Delta^k(0;R)})\Big) \subset N_{\ep}\big(\partial \Omega_{n,1} \cap \ov{\Delta^k(0;R)}\big).\] This completes the proof of (i). 
%

\medskip\no 
From Lemma \ref{topological}, there exists $0< \tilde{c}_{n+1}<1$ such that for every $\tilde{c}_{n+1} \le c<1$
\[ \partial\Omega_{n+1,c} \cap \ov{\Delta^k(0;R)} \subset N_\ep\big(\partial \Omega_{n+1,1} \cap \ov{\Delta^k(0;R)}\big).\]
Thus for $c_{n+1}=\max\{c_i,\tilde{c}_{n+1}: 0 \le i \le n\}$, property (ii) is proved.

\medskip\no 
Now as in the proof of Theorem \ref{Short C^k theorem}, note that if the $\alpha_m$'s are chosen sufficiently small, i.e., $\alpha_m \le \alpha_m(n+1)$ for every $m \ge n+2$, then there exists $c_{n+1}<c_{n+1}'< 1$ such that
\[ F(m)F(n+1)^{-1}\big(\Delta^k(0; c_{n+1})\big)\subset \Delta^k\big(0; c_{n+1}(c_{n+1}')^{m-n-1}\big).\] So for $m \ge n+2$, if  $$\alpha_m \le \min\{\alpha_{m-1}^2, \alpha_m(i): 0 \le i \le n+1\}$$ then
\[ \Omega_{n+1,c_{n+1}} \subset \Omega_{m, c_{n+1}} \subset \Omega \text{ and } \Omega_{n,c_n} \subset \Omega_{n+1,c_n} \subset \Omega_{n+1,c_{n+1}}. \] This proves property (iii).
\end{proof}
\begin{rem}\label{Remark 3}
Note that the proof of Proposition \ref{main proposition} ensures that if $0<c_{n+1}<c<1$, then 
\[\partial \Omega_{n+1,c} \cap \ov{\Delta^k(0;R)}\subset N_{\ep}\big(\partial \Omega_{n+1,1} \cap \ov{\Delta^k(0;R)}\big)\]also.
\end{rem}

\no Let us recall the following definitions from \cite{Gl2}. For a given $R \gg 1$ and for every $1 \le j \le k$, let 
\[ P_j(R)=\{ z \in \mbb C^k: |z_j|=1, |z_i|  \le R, i \neq j \}.\]

\begin{defn}
Fix $j \le k$. For a given $l\ge 1$ and $\phi \in  {C}^l(P_j(R))$, let
\[ \Gamma_{\phi}^j(R)=\{ z \in \mbb{C}^k: z_j=\phi(z_1,\hdots,z_j,\xi,z_{j+1}, \hdots,z_k)\xi, \xi \in \partial \Delta, |z_i| \le R, i \neq j \}.\]
We will refer to this as the $ {C}^l-$graph over $P_j(R)$ given by $\phi.$
\end{defn}
\begin{defn}
Fix $j \le k$. For a given $l\ge 1$ and $\phi \in  {C}^l(P_j(R))$, let 
\[ G_{\phi}^j(R)=\{ z \in \mbb{C}^k: z_j=t\phi(z_1,\hdots,z_j,\xi,z_{j+1}, \hdots,z_k)\xi, 0\le t \le 1,\xi \in \partial \Delta, |z_i| \le R, i \neq j \}.\]
This is called the standard domain over $P_j(R)$ given by $\phi.$
\end{defn}
Fix $j \le k$. For a given $\ep>0$, $l\ge 1$ and $\phi \in  {C}^l(P_j(R))$ the $\ep-$neighbourhood of $\phi$ is the collection of all $ {C}^l-$smooth functions $\psi \in P_j(R)$ such that
\[ \|\phi-\psi\|_{ {C}^l(P_j(R))} < \ep.\]

\no Now using the above results appropriately it is possible to control the boundary of a {\it Short $\mbb{C}^k$} on a large enough polydisk. This is stated as follows:
\begin{thm}\label{pre theorem 3}
For given $R \gg 1 $ and an  $0< \ep <1$, there exists a Short $\mbb C^k$, say $\Omega$ such that:
\begin{enumerate}
\item [(i)] For every $2 \le j \le k$, there exists $r_j \in  {C}^{\infty}(P_j(R))$ such that 
$\Gamma_{r_j}^j(R)$ is an $\epsilon$-small ${C}^{\infty}-$perturbation of $P_j(R).$
\item[(ii)] For every $2 \le j \le k$, $\Omega \cap \ov{\Delta^k(0;R)}$ is the intersection of standard domains over $P_j(R)$ given by $r_j$'s, i.e.,
$$\Omega \cap \ov{\Delta^k(0;R)}=\bigcap_{j=2}^k G_{r_j}^j(R).$$
\item[(iii)] $\partial\Omega \cap \ov{\Delta^k(0;R)}$ is contained in an $\ep-$perturbation of $\Delta(0;R) \times \partial \Delta^{k-1}(0;1)$, i.e.,  $$\Delta(0;R) \times \Delta^{k-1}(0;1) \subset \Omega\cap \ov{\Delta^k(0;R)} \subset \Delta(0;R) \times \Delta^{k-1}(0;1+\ep).$$
\end{enumerate}
\end{thm}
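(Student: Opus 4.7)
The plan is to construct $\Omega$ as the basin of attraction at the origin of a sequence $\{F_n = F_{\alpha_n}\}$ built inductively by repeated application of Proposition~\ref{main proposition}. Fix auxiliary data: a summable sequence $\epsilon_n > 0$ with $\sum_n \epsilon_n < \epsilon$ and an increasing sequence of smoothness indices $l_n \to \infty$. For the base step, use Lemma~\ref{neighbourhood} (in the $C^{l_0}$-version) to choose $\alpha_0$ so small that $\Omega_{0,1} \cap \ov{\Delta^k(0;R)}$ is described, via the recursive relation $z_i = t_i \phi_{\alpha_0}(\xi_i,z_{i-1})\xi_i$, as an intersection $\bigcap_{j=2}^k G_{r_j^{(0)}}^j(R)$ of standard domains, with graphs $r_j^{(0)}$ satisfying $\|r_j^{(0)} - 1\|_{C^{l_0}(P_j(R))} < \epsilon_0$.

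Inductively, having fixed $F_0, \ldots, F_n$ satisfying the hypotheses of Proposition~\ref{main proposition} and with $\Omega_{n,1} \cap \ov{\Delta^k(0;R)} = \bigcap_{j=2}^k G_{r_j^{(n)}}^j(R)$, I apply Proposition~\ref{main proposition} with tolerance $\epsilon_{n+1}$ to produce $F_{n+1} = F_{\alpha_{n+1}}$, the constant $c_{n+1} < 1$, and the tail sequences $\{\alpha_m(n+1)\}_{m \ge n+2}$. The $\epsilon$-neighborhood control in Proposition~\ref{main proposition}(i) comes from the uniform continuity of $F(n)^{-1}$ on $\ov{\Delta^k(0;R_n)}$ together with Lemma~\ref{neighbourhood}; re-running that argument in the $C^{l_n}$-norm (which requires only further shrinking $\alpha_{n+1}$ so that $\phi_{\alpha_{n+1}}$ is $C^{l_n}$-close to $1$) produces graphs $r_j^{(n+1)}$ with $\|r_j^{(n+1)} - r_j^{(n)}\|_{C^{l_n}(P_j(R))} < \epsilon_{n+1}$. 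At each stage I also enforce $\alpha_{n+1} \le \alpha_n^2$, which guarantees via Theorem~\ref{Short C^k theorem} that the resulting basin is a \emph{Short} $\mbb{C}^k$.

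Summability now gives, for every fixed $l$, a Cauchy sequence $\{r_j^{(n)}\}_{n \,:\, l_n \ge l}$ in $C^l(P_j(R))$; hence $r_j^{(n)} \to r_j$ in every $C^l$, so $r_j \in C^\infty(P_j(R))$ with $\|r_j - 1\|_{C^1} < \epsilon$. Set $\Omega = \bigcup_{n \ge 0} \Omega_{n,c_n}$, which by Proposition~\ref{main proposition}(iii) coincides with the basin of attraction of $\{F_n\}$. Conclusion (i) is immediate from the smoothness and smallness of $r_j - 1$. Conclusion (ii) follows by taking limits of the identity $\Omega_{n,1} \cap \ov{\Delta^k(0;R)} = \bigcap_{j=2}^k G_{r_j^{(n)}}^j(R)$, using Remark~\ref{Remark 3} to control the intermediate sublevel sets $\Omega_{n,c_n}$. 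Conclusion (iii) follows from the two-sided bound $1 < \phi_\alpha \le 1 + \epsilon$ of Remark~\ref{Remark 1}, which descends in the limit to $1 \le r_j \le 1 + \epsilon$ and yields the sandwich of polydisks.

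The main technical obstacle is upgrading the metric-$\epsilon$ control in Proposition~\ref{main proposition} to genuine $C^{l_n}$-control on the defining graphs, and combining this with the diagonal choice $l_n \to \infty$ to produce $C^\infty$ limits. A secondary delicate point is verifying that, on the fixed polydisk $\ov{\Delta^k(0;R)}$, the union $\bigcup \Omega_{n,c_n}$ simultaneously agrees with the basin of attraction and with the intersection $\bigcap_{j=2}^k G_{r_j}^j(R)$ of limit standard domains — this uses Remark~\ref{Remark 3} to pass from the full-level boundary description to the relevant sublevel sets entering the union.
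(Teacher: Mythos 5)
Your overall strategy is the same as the paper's: an inductive construction of $F_{n}=F_{\alpha_n}$ with $\alpha_{n+1}\le\alpha_n^2$, summable tolerances, increasing smoothness indices, the constants $c_n$ and tail sequences coming from Proposition~\ref{main proposition}, and a diagonal/Cauchy argument giving $C^\infty$ limit graphs $r_j$, followed by identifying $\Omega$ with $\bigcup_n\Omega_{n,c_n}$ and passing to limits via Remark~\ref{Remark 3}. However, there is a genuine gap at the heart of the inductive step. You claim that the estimate $\|r_j^{(n+1)}-r_j^{(n)}\|_{C^{l_n}(P_j(R))}<\epsilon_{n+1}$ is obtained by ``re-running'' the argument of Proposition~\ref{main proposition}(i), i.e.\ from the uniform continuity of $F(n)^{-1}$ on $\ov{\Delta^k(0;R_n)}$ together with Lemma~\ref{neighbourhood}, after shrinking $\alpha_{n+1}$ so that $\phi_{\alpha_{n+1}}$ is $C^{l_n}$-close to $1$. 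Uniform continuity of $F(n)^{-1}$ only yields metric ($C^0$, tube-neighbourhood) proximity of the sets $\partial\Omega_{n+1,1}$ and $\partial\Omega_{n,1}$ inside $\ov{\Delta^k(0;R)}$ --- which is all that Proposition~\ref{main proposition}(i) asserts. It does not show that $F(n)^{-1}(T_j)\cap\ov{\Delta^k(0;R)}$ (where $T_j=\{|\pi_j\circ F_{n+1}|=1\}$) is again a \emph{graph} over $P_j(R)$, and it certainly does not control the $C^{l_n}$-norm of the difference of the graphing functions: closeness of $\phi_{\alpha_{n+1}}$ to $1$ lives over the large polydisk $\ov{\Delta^k(0;R_n)}$, and transporting derivative estimates back through the fixed nonlinear map $F(n)^{-1}$ requires a quantitative graph-stability statement, not continuity.

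This is precisely what the paper supplies with Globevnik's lemma (Lemma~\ref{Globevnik}, Lemma~2.1 of \cite{Gl2}): for the fixed automorphism $F(n)$, a given $l$ and a given $\epsilon$, there is a $\delta>0$ such that any $C^{l}$-graph over $P_j(R_n)$ within $\delta$ of the cylinder $S_j$ pulls back, on $\ov{\Delta^k(0;R)}$, to a $C^{l}$-graph over $P_j(R)$ lying within $\epsilon$ of $F(n)^{-1}(S_j)\cap\ov{\Delta^k(0;R)}$ in the $C^{l}$-norm; one then chooses $\alpha_{n+1}$ small enough (via Lemma~\ref{smooth_function}/Lemma~\ref{neighbourhood} at level $l=l_n$) so that $T_j$ is within that $\delta$ of $S_j$. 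Without this lemma, or an equivalent implicit-function-theorem argument with uniform bounds on the derivatives of $F(n)^{-1}$ on the relevant compact set, your Cauchy sequence in $C^{l}(P_j(R))$ is not established and the $C^\infty$ limit $r_j$ has no foundation. You correctly flag this as the ``main technical obstacle,'' but the proposal does not actually resolve it. (A smaller point of the same nature: the lower inclusion $\ov{\Delta(0;R)}\times\Delta^{k-1}(0;1)\subset\bigcap_j G^j_{r_j^{(n)}}(R)$ used for conclusion (iii) is obtained in the paper from the monotonicity $r_j^{(n-1)}<r_j^{(n)}$, which again comes from comparing the pulled-back graphs, not merely from the pointwise bound $1<\phi_\alpha\le 1+\epsilon$ on the big polydisk.)
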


\begin{proof}
For every $2 \le j \le k$, let $r_j^{-1}$ denote the constant function $1$ on $P_j(R)$, i.e.,
\[ \Gamma_{r_j^{-1}}^j(R)=P_j(R).\]
and $\ep_n^0={\ep}/{k2^{n+1}}$ for every $n \ge 0.$

\medskip\no 
{\bf Induction statement:} For a given $n \ge 0$, there exist $(n+1)-$automorphisms of $\mbb C^k$, say $F_i$ ($0 \le i \le n$) such that:
\begin{itemize}
\item[(i)] $F_i=F_{\alpha_i}$ where $\alpha_{i+1} \le \alpha_i^2$ for $0 \le i \le n-1.$

\medskip
\item[(ii)]For every $2 \le j \le k$ and $0 \le i \le n$,
\[ S_j=\{z \in \ov{\Delta^k(0;R)}: |\pi_j \circ F(i)(z)|=1\}\] is given by the graph of $r_j^i \in  {C}^{\infty}(P_j(R))$ where
\[\|r_j^i-r_j^{i-1}\|_{ {C}^i(P_j(R))}< \ep^0_i \text{ and }\Omega_{i,1} \cap \ov{\Delta^k(0;R)}=\bigcap_{j=2}^k G_{r_j^i}^j(R).\]
\item[(iii)] For every $0 \le i \le n$ and $2 \le j \le k$, $r_j^i \in  {C}^{\infty}(P_j(R))$ is increasing, i.e., 
\[ 0<r_j^{i-1}(x)< r_j^{i}(x) \text{ for every } x \in P_j(R) \text{ and } G_{r_j^{i-1}}(R) \subset G_{r_j^{i}}(R).\]
\item[(iv)] There exist $\{c_i\}_{i=0}^n$ such that $0 < c_i< 1$ and for every $0 \le i \le n$,
\[ \partial\Omega_{i, c_i} \cap \ov{\Delta^k(0;R)} \subset N_{\ep_i^0}(\partial\Omega_{i,1}\cap \ov{\Delta^k(0;R)}).\]
\item[(v)] For every $0 \le i \le n$, there exist $(n+1)-$sequences $\{\alpha_m(i)\}_{m \ge i+1}$ such that for every $m \ge n+1$, if $F_m=F_{\alpha_m}$ where $$\alpha_m\le\min\{\alpha_m(i), \alpha_{m-1}^2: 0 \le i  \le n\}$$ and $\Omega$ is the basin of attraction of the sequence $\{F_j\}_{j=0}^{\infty}$, then
\[ \Omega_{i,c_i} \subset \Omega_{m,c_n} \subset \Omega \text{ and } \Omega_{i,c_i} \subset \Omega_{i+1, c_{i+1}} \text{ for } 0 \le i \le n-1. \]
\end{itemize}
{\it Initial case:} This corresponds to $n=0.$

\medskip\no 
Note that by Lemma \ref{neighbourhood} and \ref{topological}, there exist $\alpha_0$ and $c_0$ such that $F_0=F_{\alpha_0}$ satisfies properties (i), (ii) and (iv) above. Also, from Remark \ref{Remark 1}, $F_0$ satisfies property (iii) as well.

\medskip\no 
Now using the same arguments as in the proof of property (iii) of Proposition \ref{main proposition}, there exists a sequence $\{\alpha_m(0)\}_{m \ge 1}$ such that if $F_m=F_{\alpha_m}$ for every $m \ge 1$, where $\alpha_m\le\min\{\alpha_m(0), \alpha_{m-1}^2\}$ and $\Omega$ is the basin of attraction of the sequence $\{F_p\}_{p=0}^{\infty}$, then
\[ \Omega_{0,c_0} \subset \Omega \text{ and } \Omega_{m,c_0} \subset \Omega \]
 for every $m \ge 1.$
 
\medskip\no 
So we may assume that the above conditions are true for some $n_0 \ge 0$. 

\medskip\no 
{\it General case:} Let $R_{n_0}>0$ be such that $$F(n_0)\big(\ov{\Delta^k(0;R)}\big) \subset \subset \ov{\Delta^k(0;R_{n_0})}.$$ Recall Lemma 2.1 from \cite{Gl2}. 
\begin{lem}\label{Globevnik}
Let $l \ge 1$ and $r_0>0.$ Let $\Phi$ be a holomorphic automorphism of $\mbb C^k$ and let $R_0>1$ be so large that 
\[ \Phi(\ov{\Delta^k(0;r_0)}) \subset \subset \ov{\Delta^k(0;R_0)}.\]
Let $S=\{ z \in \ov{\Delta^k(0;R_0)}:|z_j|=1 \}$ and assume that
$ \Phi^{-1}(S) \cap \ov{\Delta^k(0;r_0)}$ is a $ {C}^l-$graph over $P_i(r_0).$ Then for a given $\ep>0$, there exists a $\delta>0$ such that if $T$ is a $ {C}^l-$graph over $P_j(R_0)$ in the $\delta-$neighbourhood of $S$, then $\Phi^{-1}(T) \cap \ov{\Delta^k(0;r_0)}$ is $ {C}^l-$smooth graph over $P_i(r_0)$ belonging to the $\ep-$neighbourhood of $ \Phi^{-1}(S) \cap \ov{\Delta^k(0;r_0)}.$
\end{lem}
\no For $2 \le j \le k$, let $S_j=\{ z \in \ov{\Delta^k(0;R_{n_0})}:|z_j|=1 \}$. By assumption, each $F(n_0)^{-1}(S_j)$ is a graph over $P_j(R).$ Then for $\ep^0_{n_0+1}$, there exists $\delta>0$ such that Lemma \ref{Globevnik} is true for the automorphism $F(n_0)$ with $l=n_0+1.$ 

\medskip\no 
Now choose $0<\ep_{n_0+1} \le \min\{\ep^0_{n_0+1}, \delta \}.$ 

\medskip\no 
By applying Lemma \ref{neighbourhood} on $\ov{\Delta^k(0;R_{n_0})}$, there exists $\alpha_{n_0+1}'$ such that if $F_{n_0+1}=F_{\alpha_{n_0+1}}$ where $0 < \alpha_{n_0+1} \le \alpha_{n_0+1}' $, then for every $2 \le j \le k$ 
\[T_j=\{z \in \ov{\Delta^k(0;R_{n_0})}: |\pi_j \circ F_{n_0+1}(z)|=1 \} \]
is in the $\ep_{n_0+1}-$neighbourhood of $S_j.$ Also from Proposition \ref{main proposition}, there exists $\alpha_{n_0+1}''$ such that if $F_{n_0+1}=F_{\alpha_{n_0+1}}$, where $0 < \alpha_{n_0+1} \le \alpha_{n_0+1}'' $, there exists $c_{n_0}\le c_{n_0+1}<1 $ such that 
\begin{align}\label{thm 3,eqn 1}
\Omega_{n_0+1} \cap \ov{\Delta^k(0;R)}=\{(z_1,z_2,\hdots,z_k) \in \ov{\Delta^k(0;R)}:|z_j^{n_0+1}|<1 \text{ for every } 2 \le j \le k\} 
\end{align} and 
\begin{align}\label{thm 3,eqn 2}
\partial \Omega_{n_0+1,c_{n_0+1}} \cap \ov{\Delta^k(0;R)}\subset N_{\ep^0_{n_0+1}}\big(\partial \Omega_{n_0+1,1} \cap \ov{\Delta^k(0;R)}\big).
\end{align}
Choose $0< \alpha_{n_0+1}\le \min\{\alpha_{n_0+1}', \alpha_{n_0+1}'',\alpha_{n_0}^2, \alpha_{n_0+1}(i): 0 \le i \le n_0\}.$ 

\medskip\no 
By assumption, $F(n_0)^{-1}(S_j) \cap \ov{\Delta^k(0;R)}$ is actually a $ {C}^{\infty}-$smooth graph over $P_j(R)$, i.e., in particular a $ {C}^{n_0+1}-$smooth graph over $P_j(R)$. Hence by Lemma \ref{Globevnik}, $F(n_0)^{-1}(T_j) \cap \ov{\Delta^k(0;R)}$ is a $ {C}^{n_0+1}-$smooth graph over $P_j(R)$. Let $r_j^{n_0+1}$ denote the function for this graph and by the choice of $\ep_{n_0+1}$, it is assured that for every $2 \le j \le k$,
\[ \|r_j^{n_0}-r_j^{n_0+1}\|_{ {C}^{n_0+1}(P_j(R))} \le \ep_{n_0+1}^0\text{ and }\Omega_{n_0+1,1} \cap \ov{\Delta^k(0;R)}=\bigcap_{j=2}^k G_{r_j^{n_0+1}}^j(R).\]
Observe that for every $2 \le j \le k$, by  Remark \ref{Remark 1}, $T_j$ is a smooth graph in $P_j(R_{n_0}).$ Also $F(n_0)^{-1}(T_j) \cap \ov{\Delta^k(0;R)}$ is a graph over $P_j(R)$ and $F(n_0)^{-1}$ is a smooth function. Hence $r_j^{n_0+1} \in  {C}^{\infty}(P_j(R)).$

\medskip\no 
Note that by construction $\Omega_{n_0+1}$ satisfies condition (\ref{thm 3,eqn 1}) and (\ref{thm 3,eqn 2}), for $c_{n_0+1}.$ So $\{F_i\}_{i=0}^{n_0+1}$ satisfies properties (i), (ii) and (iv) of the induction statement.

\medskip\no 
From Remark \ref{Remark 1}, it follows that  for every $2 \le j \le k$, 
\[ \{z \in \ov{\Delta^k(0;R_{n_0})}: |z_j| \le 1 \} \subset \{z \in \ov{\Delta^k(0;R_{n_0})}: |\pi_j \circ F_{n_0+1}(z)| \le 1\}.\]
Hence property (iii) is also satisfied, i.e., for every $0 \le i \le n_0+1$
$$r_{j}^{i-1}<r_j^{i} \text{ and } G_{r_j^{i-1}}^j \subset G_{r_j^{i}}^j.$$

\medskip\no 
Now as observed in the proof of Proposition \ref{main proposition}, the sequence $\{\alpha_m(n_0+1)\}_{m \ge n_0+2}$ can be appropriately chosen such that if $F_m=F_{\alpha_m}$ for every $m \ge n_0+2$, where $$\alpha_m\le\min\{\alpha_m(i), \alpha_{m-1}^2: 0 \le i \le n_0+1\}$$ and $\Omega$ is the basin of attraction of the sequence $\{F_j\}_{j=0}^{\infty}$ at the origin, then
\[ \Omega_{i,c_i} \subset \Omega_{i+1, c_{i+1}}  \subset \Omega_{m,c_{n_0+1}} \subset \Omega  \text{ for } 0 \le i \le n_0. \]Thus the induction statement is true for every $n \ge 0.$

\medskip\no 
So we have a sequence of automorphisms $\{F_p\}_{p \ge 0}$ such that the following is true for every $p \ge 0$:
\begin{itemize}
\item The basin of attraction of $\{F_p\}_{p \ge 0}$ at the origin, i.e., $\Omega$ is a {\it Short $\mbb{C}^k$}.
\item There exists an increasing sequence $\{c_p\}_{p\ge 0}$ such that $0 <c_p<1$ and $$\Omega_{p,c_p} \subset \Omega_{p+1, c_{p+1}} \subset \Omega.$$
\item By Remark \ref{Remark 3}, $$ \partial\Omega_{p, c} \cap \ov{\Delta^k(0;R)} \subset N_{\ep_p^0}(\partial\Omega_{p,1}\cap \ov{\Delta^k(0;R)})$$ for every $p \ge 0$, if $0<c_p \le c<1$. 
\item For every $2 \le j \le k $, there exists an increasing sequence of positive functions $r_j^i \in  {C}^{\infty}(P_j(R))$ such that $$\Omega_{p,1} \cap \ov{\Delta^k(0;R)}=\bigcap_{j=2}^k G_{r_j^{p}}^j(R),\;\; \Omega_{p,1} \cap \ov{\Delta^k(0;R)} \subset \Omega_{p+1,1} \cap \ov{\Delta^k(0;R)}$$ and   $$ \|r_j^{p-1}-r_j^{p}\|_{ {C}^p(P_j(R))} \le \ep_{p}^0.$$
\end{itemize}
{\it Case 1:} Suppose $0<c_n<C<1$ for every $n \ge 0.$ Then from Theorem \ref{Short C^k theorem}, there exists $n_0 \ge 0$ such that 
$$\Omega=\bigcup_{n \ge n_0} \Omega_{n,C} \text{ and } \Omega_{n,C} \subset \Omega_{n+1,C} \text{ for }n \ge n_0.$$
So 
\[ \partial\Omega \cap \ov{\Delta^k(0;R)}=\lim_{n \to \infty} \partial \Omega_{n,C} \cap \ov{\Delta^k(0;R)}.\]
But $$ \partial\Omega_{n, C} \cap \ov{\Delta^k(0;R)} \subset N_{\ep_n^0}(\partial\Omega_{n,1}\cap \ov{\Delta^k(0;R)})$$ and $\ep_n^0 \to 0$ as $n \to \infty.$ Hence
\[ \Omega \cap \ov{\Delta^k(0;R)}=\lim_{n \to \infty} \Omega_{n,C} \cap \ov{\Delta^k(0;R)}=\lim_{n \to \infty} \Omega_{n,1} \cap \ov{\Delta^k(0;R)}.\]

\medskip\no 
{\it Case 2:} Suppose $c_n \to 1$ as $n \to \infty.$ In this case, for every $n \ge 0$
\begin{align*}
 \Omega_{n,c_n} \subset \Omega, \text{ and } \partial\Omega_{n,c_n} \cap \ov{\Delta^k(0;R)} \subset N_{\ep_n^0}\big(\partial\Omega_{n,1} \cap \ov{\Delta^k(0;R)}\big).
 \end{align*}
Notice that by choice $c_0 \le c_n < 1$ and $\Omega_{n,c_0} \subseteq \Omega_{n,c_n}$. Hence
\[ \Omega = \bigcup_{n=n_0}^{\infty} \Omega_{n,c_0}\subset \bigcup_{n=0}^{\infty} \Omega_{n,c_n}\subseteq \Omega,\] i.e.,
\[ \Omega=\bigcup_{n=0}^{\infty} \Omega_{n,c_n} \text{ and } \Omega_{n,c_n} \subset \Omega_{n+1,c_{n+1}}.\]
So
\[ \partial\Omega \cap \ov{\Delta^k(0;R)} =\lim_{n \to \infty} \partial \Omega_{n,c_n} \cap \ov{\Delta^k(0;R)} \]
and $\ep_n^0 \to 0$ as $n \to \infty.$ Hence
\[ \Omega \cap \ov{\Delta^k(0;R)}=\lim_{n \to \infty}\Omega_{n,c_n} \cap \ov{\Delta^k(0;R)}=\lim_{n \to \infty}  \Omega_{n,1} \cap \ov{\Delta^k(0;R)}.\]
Now  for every $2 \le j \le k$ and for a fixed $i \ge 0$, the sequence of functions $\{r_j^n\}_{n=i}^{\infty}$ is an increasing Cauchy sequence in $ {C}^{i}(P_j(R))$. Since $ {C}^{i}(P_j(R))$ is a Banach space and $r_j^n \to r_j$ as $n \to \infty$, $r_j \in  {C}^i(P_j(R))$ for every $i \ge 0$, i.e., $r_j \in  {C}^{\infty}(P_j(R))$ for every $2 \le j\le k.$ Also 
\[ \|r_j-1\|_{ {C}^l(P_j(R))} \le \ep_0 ,\]i.e.,\[ 1-\ep_0  \le r_j(x)\le 1+\ep_0\] for every $ x \in P_j(R)$ and
$$\Omega_{n,1} \cap \ov{\Delta^k(0;R)}=\bigcap_{j=2}^k G_{r_j^{n}}^j(R).$$ Hence taking limits as $n \to \infty$ on both sides,
$$\Omega \cap \ov{\Delta^k(0;R)}=\bigcap_{j=2}^k G_{r_j}^j(R).$$ 
But $G_{r_j}^j$ is contained in the $\ep_0-$neighbourhood of $P_j(R)$, i.e., in $N_{\ep_0}(P_j(R))$, so we have that
\[\Omega \cap \ov{\Delta^k(0;R)} \subset \ov{\Delta(0;R)} \times \Delta^{k-1}(0;1+\ep).\] 
Note that by construction, for every $n \ge 0$,
\[ \ov{\Delta(0;R)} \times \Delta^{k-1}(0;1) \subset \bigcap_{j=2}^k G_{r_j^n}^j(R), \]
i.e.,
\[ \ov{\Delta(0;R)} \times \Delta^{k-1}(0;1) \subset \Omega \cap \ov{\Delta^k(0;R)}\] and this completes the proof.
\end{proof}
\begin{cor}
There exists a {\it Short $\mbb{C}^k$}, say $\Omega$ and a bounded domain $D \subset \mbb{C}^{k-1}$ which embeds holomorphically in $\Omega$.
\end{cor}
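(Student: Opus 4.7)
The plan is to read off the corollary directly from Theorem \ref{pre theorem 3}, which has already done the real work. The key observation is that conclusion (iii) of that theorem guarantees, for any sufficiently large $R$ and small $\epsilon>0$, the existence of a \emph{Short} $\mathbb{C}^k$ $\Omega$ satisfying
\[
\overline{\Delta(0;R)} \times \Delta^{k-1}(0;1) \subset \Omega \cap \overline{\Delta^k(0;R)}.
\]
In particular, $\Omega$ contains the entire bounded open product set $\Delta(0;R) \times \Delta^{k-1}(0;1)$, and hence contains every complex $(k-1)$-dimensional slice obtained by fixing the first coordinate inside $\Delta(0;R)$.

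Given this, I would take $D = \Delta^{k-1}(0;1) \subset \mathbb{C}^{k-1}$, which is clearly a bounded domain, and fix any point $z_1^0 \in \Delta(0;R)$ (for instance $z_1^0 = 0$). The map
\[
\iota : D \longrightarrow \Omega, \qquad \iota(w_1, \dots, w_{k-1}) = (z_1^0, w_1, \dots, w_{k-1}),
\]
is holomorphic and injective, and its image $\{z_1^0\} \times \Delta^{k-1}(0;1)$ lies inside $\Omega$ by the inclusion above. Hence $D$ embeds holomorphically into the \emph{Short} $\mathbb{C}^k$ $\Omega$.

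There is no real obstacle here --- all the difficulty was absorbed into the construction of Theorem \ref{pre theorem 3}, where the boundary of $\Omega$ was controlled so that $\Omega$ almost looks like a cylinder $\Delta(0;R) \times \Delta^{k-1}(0;1)$ on the given polydisc. Once that theorem is in hand, the corollary is just the observation that a \emph{Short} $\mathbb{C}^k$ can be made large enough in the last $k-1$ directions to accommodate a nontrivial bounded slice. One could of course enlarge $D$ by taking, for example, $D = \Delta(0;R') \times \Delta^{k-2}(0;1) \subset \mathbb{C}^{k-1}$ with $R' < R$ and mapping it via a similar slicing, but the statement only asks for existence of some bounded $D$, so the simplest choice suffices.
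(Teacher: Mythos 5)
Your proposal is correct and follows essentially the same route as the paper: both read the corollary off Theorem \ref{pre theorem 3} by slicing along a fixed first coordinate, the only cosmetic difference being that the paper takes $D$ to be the component of $\Omega \cap \{z_1 = 0\}$ containing the origin (an $\ep$-perturbation of $\Delta^{k-1}(0;1)$), while you take $D = \Delta^{k-1}(0;1)$ itself and embed it via the inclusion $\Delta(0;R)\times\Delta^{k-1}(0;1)\subset\Omega$ from part (iii).
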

\begin{proof}
Consider the subspace of $\mbb{C}^k$ given by
\[H=\{ (z_1,z_2,\hdots,z_k) \in \mbb{C}^k: z_1=0 \}.\]
Let $\Omega$ be the {\it Short $\mbb{C}^k$} obtained from Theorem \ref{pre theorem 3} and let
$D$ be the component of $\Omega \cap H$ which contains the origin. From Theorem \ref{pre theorem 3}, it follows that $D$ is an $\ep-$perturbation of the unit polydisk $\Delta^{k-1}(0;1)$ where $\ep>0$ is sufficiently small.
\end{proof}
\section{Proof of Theorem \ref{theorem 3} and Theorem \ref{theorem 4}}
\no The main idea here is to use Theorem \ref{pre theorem 3} for polydisks of increasing radii in $\mbb{C}^k$, $k \ge 2.$ We will follow the same notations  from Section 4. First we prove Theorem \ref{theorem 3}, i.e., when $k=2.$

\begin{proof}[Proof of Theorem \ref{theorem 3}]
Choose $0<\ep<10$ and a sequence of strictly increasing positive real numbers $\{R_n\}_{n \ge 0}$ such that $R_0 \gg 1.$ 
Let $r$ denote the constant function $1$ on all of $\mbb C^2$, then for every $R\ge 0$,
\[ \Gamma_{r}^2(R)=P_2(R).\]
Let $\ep_n={\ep}/{2^{n+2}}$ for every $n \ge 0.$ 

\medskip\no 
{\bf Induction statement:} For a given $n \ge 0$, there exist $(n+1)-$automorphisms of $\mbb C^2$, say $F_i$ ($0 \le i \le n$) such that
\begin{itemize}
\item[(i)] $F_i=F_{\alpha_i}$, where $\alpha_{i+1} \le \alpha_i^2$ for $0 \le i \le n-1.$

\medskip
\item[(ii)]There exist $(n+1)-$increasing positive real numbers $\{R_i'\}_{i=1}^{n+1}$ such that  
\[ F(i)\big(\ov{\Delta^2(0;R_{i+1})}\big) \subset\subset \ov{\Delta^2(0;R_{i+1}')} \text{ and }F_i\big(\ov{\Delta^2(0;R_i')}\big) \subset \subset \ov{\Delta^2(0;R_{i+1}')}\]
for every $0 \le i \le n$. Here $R_0'=R_1.$ Also
\[F_i^{-1}\big(\partial\Delta^2(0;1)\big) \cap \ov{\Delta^2(0;R_i')} \subset N_{\ep_i}\big( \ov{\Delta(0;R_i')} \times \partial \Delta(0;1)\big).\]

\medskip
\item[(iii)] For every $0 \le i \le n$ and $i \le j \le n$ there exist functions $r_i^j \in  {C}^{\infty}\big(P_2(R_{i+1}')\big)$ whose graphs over $P_2(R_{i+1}')$ are the $(n-i+1)$ sets
\[S(i,j)=\{z \in \ov{\Delta^2(0;R_{i+1}')}: |\pi_2 \circ F(i,j)(z)|=1\}.\]

\medskip
\item[(iv)]For every $0 \le i \le n$ and $i\le j \le n$,
\[ \|r_i^{j+1}-r_i^j\|_{ {C}^i\big(P_2(R_{i+1}')\big)}< {\ep_i}/{2^{j+1-i}} \] and  \[G_{r_i^{j}} ^2(R_{i+1}') \subset G_{r_i^{j+1}} ^2(R_{i+1}').\]
Moreover $r_i^i=r$, i.e., $G_{r_i^i} ^2(R_{i+1}')=P_2(R_{i+1}').$

\medskip
\item[(v)] There exist $\{c_i\}_{i=0}^n$ such that $0 < c_i< 1$ and for every $0 \le i \le n$ and $i \le j \le n$,
\[ \partial\Omega_{j, c_j} \cap F(i)^{-1}\big(\ov{\Delta^2(0;R_{i+1}')}\big)\subset N_{\ep_j}\Big(\partial\Omega_{j,1}\cap F(i)^{-1}\big(\ov{\Delta^2(0;R_{i+1}')}\big)\Big).\]
\item[(vi)] For every $0 \le i \le n$, there exist $(n+1)-$sequences $\{\alpha_m(i)\}_{m \ge i+1}$ such that $$\alpha_m\le\min\{\alpha_m(i), \alpha_{m-1}^2: 0 \le i  \le n\}$$  for $m \ge n+1$ and corresponding automorphisms $F_m=F_{\alpha_m}$ with the property that if $\Omega$ is the basin of attraction of the sequence $\{F_j\}_{j\ge 0}$ at the origin, then
\[\Omega_{i,c_i} \subset \Omega_{i+1, c_{i+1}} \subset \Omega_{m,c_n} \subset \Omega   \text{ for } 0 \le i \le n-1. \]
\end{itemize}
{\it Initial case:} Suppose $n=0.$

\medskip\no 
By Lemma \ref{neighbourhood}, for $R=R_0'>R_0$, $\ep=\ep_0$ and $l=0$, there exists $0<\alpha_0<1$ such that $F_0=F_{\alpha_0}$ satisfies (i) and (ii) above. Let $R_1'>0$ be such that
\[ F_0\big(\ov{(\Delta^2(0;R_1)}\big) \subset\subset \ov{\Delta^2(0;R_1')} .\] But $S(0,0)=P_2(R_1')$ and hence (iii) holds by definition. Also, (iv) is vacuous, since $r^1_0$ is not yet defined. By Lemma \ref{topological}, if $K=F_0^{-1}\big(\ov{\Delta^2(0;R_1')}\big)$, there exists $0<c_0<1$ such that (v) holds. 

\medskip\no 
Finally, as in Theorem \ref{pre theorem 3}, the same arguments as in the proof of property (iii) of Proposition \ref{main proposition}, there exists a sequence $\{\alpha_m(0)\}_{m \ge 1}$ such that if $F_m=F_{\alpha_m}$ for every $m \ge 1$, where $\alpha_m\le\min\{\alpha_m(0), \alpha_{m-1}^2\}$ and $\Omega$ is the basin of attraction of the sequence $\{F_p\}_{p=0}^{\infty}$ at the origin, then
\[ \Omega_{0,c_0} \subset \Omega \text{ and } \Omega_{m,c_0} \subset \Omega\]  for every  $m \ge 1.$ Hence $F_0$ satisfies all of (i)--(vi).

\medskip\no 
By induction we may assume that the above properties are true for some $n_0 \ge 0.$

\medskip\no 
{\it General Case:} By the induction hypothesis, there exists $R'_{n_0+1}$ such that
\[F(n_0)\big(\ov{\Delta^2(0;R_{n_0+1})}\big) \subset\subset \ov{\Delta^2(0;R_{n_0+1}')} \text{ and }F_{n_0}\big(\ov{\Delta^2(0;R_{n_0}')}\big) \subset \subset \ov{\Delta^2(0;R_{n_0+1}')}.\]
By Lemma \ref{neighbourhood}, for $R=R_{n_0+1}'>R_0$, $\ep=\ep_{n_0+1}$ and $l=n_0$, there exists $0<\alpha'<1$ such that if $F_{n_0+1}=F_{\alpha}$ where $0<\alpha<\alpha'$, 
the above properties (i) and (ii) are true. Pick $R_{n_0+2}'>0$ such that
\[ \alpha' R'_{n_0+1}+{R'}^2_{n_0+1}<R_{n_0+2}'.\]
Also for every $0 \le i \le n_0$, pick $\tilde{R}_i>0$ such that
\[ F(i,n_0)\big(\ov{\Delta^2(0;R'_{i+1})}\big) \subset \subset\ov{ \Delta^2(0;\tilde{R}_i)}.\]
Note that $\tilde{S}_i$ by definition is the graph  $P_2(\tilde{R}_i).$ Further, from the induction hypothesis, \\ ${F(i,n_0)}^{-1}(\tilde{S}_i)$ is a graph over $\ov{\Delta^2(0;R'_{i+1})}$ for every $0 \le i \le n_0.$

\medskip\no 
For a fixed $i$, $0 \le i \le n_0$, by an application of Lemma \ref{Globevnik} on $\Delta^2(0;\tilde{R}_i)$ for $l=i$, there exists a $\delta_i>0$ such that if $\tilde{T}_i$ is a graph over $P_2(\tilde{R}_i)$ which lies in the $\delta_i-$neighbourhood of $\tilde{S}_i$, then $F(i,n_0)^{-1}(\tilde{T}_i)$ is a graph on $\Delta^2(0;R'_{i+1})$. Moreover, $F(i,n_0)^{-1}(\tilde{T}_i)$ lies in the ${\ep_i}/{2^{n_0+1-i}}-$neighbourhood of $F(i,n_0)^{-1}(\tilde{S}_i).$
Now applying Lemma \ref{neighbourhood} repeatedly on each $\Delta^2(0;\tilde{R}_i)$, there exists $\alpha'_i$ such that for $0<\alpha \le \min\{\alpha'_i: 0 \le i \le n_0\} $, the set
\[|\pi_2 \circ F_{\alpha}(z)|=1 \text{ for } z \in \Delta^2(0; \tilde{R}_i) \]
is the graph of a smooth function $\phi_{\alpha}^i$ on $P_2(\tilde{R}_i).$ Also, for every $0 \le i \le n_0$
 \[ \|\phi_{\alpha}^i -1\|_{ {C}^i(P_2(\tilde{R}_i))}< \delta_i.\]
 Let $F_{n_0+1}=F_{\alpha_{n_0+1}}$ where $$0<\alpha_{n_0+1}<\min\{\alpha',\alpha_i',\alpha_i(n_0+1),\alpha_{n_0}^2: 0 \le i \le n_0 \} $$
Hence from Lemma \ref{Globevnik}, there exists $r_i^{n_0+1} \in  {C}^{\infty}(P_2(R'_{i+1}))$ such that 
\[ \Gamma^2_{r_i^{n_0+1}}(R'_{i+1})=F(i,n_0)^{-1}\big(\Gamma^2_{\phi_{\alpha_{n_0+1}}^i}(\tilde{R}_i)\big)\cap \Delta^2(0;R'_{i+1})\]
and
\[\|r_i^{n_0+1}-r_i^{n_0}\|_{ {C}^i(P_2(R'_{i+1}))} \le {\ep_i}/{2^{n_0+1-i}}.\] Moreover, by construction \[S(i,n_0+1)=\Gamma^2_{r_i^{n_0+1}}(R'_{i+1})\]  for $0 \le i \le n_0$  and $S(n_0+1,n_0+1)=P_2(R'_{n_0+2})$ which is also a graph. From Remark \ref{Remark 1} and the fact that $\alpha_{n_0+1}< \alpha'$, the sequence $\{F_i\}_{i=0}^{n_0+1}$ satisfies all the properties (i)--(iv).

\medskip\no 
For properties (v)--(vi) we will use the same arguments as in the proof of the initial case. Let $$K=F(n_0+1)^{-1}\big(\ov{\Delta^2(0;R'_{n_0+2})}\big).$$ Then by Theorem \ref{topological}, there exists $0<c_{n_0}\le c_{n_0+1}<1$ such that (v) holds. Finally, by imitating the argument in the proof of Proposition \ref{main proposition}(iii) there exists a sequence $\{\alpha_m(n_0+1)\}_{m \ge n_0+2}$ such that for $m \ge n_0+2$ if $F_m=F_{\alpha_m}$ where
$$0<\alpha_m \le\min\{\alpha_m(i), \alpha_{m-1}^2:0 \le i \le n_0+1\}$$ and $\Omega$ is the basin of attraction of $\{F_p\}_{p=0}^{\infty}$ at the origin, then 
\[ \Omega_{i,c_i} \subset \Omega_{i+1, c_{i+1}} \subset \Omega_{m,c_{n_0+1}} \subset \Omega\]   for $0 \le i \le n_0. $
To conclude, the induction statement is true for every $n \ge 0.$ Therefore
we have a sequence of automorphisms $\{F_p\}_{p \ge 0}$  such that the following is true for every $p \ge 0$.
\begin{itemize}
\item The basin of attraction of $\{F_p\}_{p \ge 0}$ at the origin, i.e., $\Omega$ is a {\it Short $\mbb{C}^k$}.
\item There exists an increasing sequence $\{c_p\}_{p\ge 0}$ such that $0 <c_p<1$ and $$\Omega_{p,c_p} \subset \Omega_{p+1, c_{p+1}} \subset \Omega.$$
\item There exists an increasing sequence of compact sets, i.e, $K_p=F(p)^{-1}(\ov{\Delta^2(0;R_{p+1}')})$ such that
$$ \mbb{C}^2= \bigcup_{p \ge 0} \ov{\Delta^2(0; R_{p+1})} \subset \bigcup_{p \ge 0} K_p=\mbb{C}^2.$$
\item By Remark \ref{Remark 3}, $$ \partial\Omega_{p, c} \cap K_p \subset N_{\ep_p}(\partial\Omega_{p,1}\cap K_p)$$ for every $p \ge 0$ if $0<c_p \le c<1.$ 
\item For every $p \ge 0$, there exists an increasing sequence of positive functions $\{r_p^{p+i}\}_{i \ge 0} \subset  {C}^{\infty}(P_2(R'_{p+1}))$ such that \[F(p)\big(\Omega_{p+i,1}\big) \cap \ov{\Delta^2(0;R'_{p+1})}= G_{r_p^{p+i}}^2(R'_{p+1}),\]\[ \Omega_{p,1} \cap K_p \subset \Omega_{p+1,1} \cap K_p\] and   $$ \|r_p^{p+i+1}-r_p^{p+i}\|_{ {C}^p(P_2(R'_{p+1}))} \le \ep_{p}/{2^{i+1}}.$$
\end{itemize}
To summarize  
\begin{align}\label{theorem 3, eqn 1}
F(p)\big(\partial \Omega_{p+i,1}\big) \cap F(p)(K_p)= \Gamma^2_{r_p^{p+i}} (R_{p+1}')
\end{align}
for a fixed $p \ge 0 $ and for every $i \ge 0 $. Here $\{r^{p+i}_p\}_{i \ge 0}$ is  a Cauchy sequence in $ {C}^p(P_2(R'_{p+1}))$. Since $ {C}^p(P_2(R'_{p+1}))$ is complete, $r_{p}^{p+i} \to r_p$ on $P_2(R'_{p+1}).$ Now following the same arguments as in the proof of Theorem \ref{pre theorem 3},
\begin{align*}
\lim_{i \to \infty} \partial \Omega_{p+i,1} \cap K_p&=\partial \Omega \cap K_p, \\
\text{i.e., }\lim_{i \to \infty} F(p) \big(\partial \Omega_{p+i,1}\big) \cap \ov{\Delta^2(0;R'_{p+1})}&=F(p)\big(\partial \Omega\big) \cap \ov{\Delta^2(0;R'_{p+1})}
\end{align*}
 for a fixed $p$.
Also from (\ref{theorem 3, eqn 1}), it follows that
$$F(p)\big(\partial \Omega\big) \cap \ov{\Delta^2(0;R'_{p+1})}=\Gamma^2_{r_p} (R_{p+1}').$$
Hence $F(p)^{-1}\big(\Gamma^2_{r_p} (R_{p+1}')\big)$ is a $ {C}^p-$smooth hypersurface on $K_p.$ But note that the above arguments are true for every $p \ge 0$ and $K_p$ is an exhaustion of $\mbb{C}^2$. Hence, the boundary of $\Omega$ is $ {C}^{\infty}-$smooth in $\mbb{C}^2.$
\end{proof}
\no 
By extending these arguments to $\mbb{C}^k$, $k \ge 3$ in exactly the same manner as in the proof of Theorem \ref{pre theorem 3}, it is possible to obtain a {\it Short $\mbb{C}^k$} whose boundary lies in the intersection of  smooth hypersurfaces.

\medskip\no 
However, to complete the proof of Theorem \ref{theorem 4}, we will need to do a little more work. Recall the definition of a piecewise smooth pseudoconvex domain in $\mbb{C}^k$, $k \ge 2.$

\medskip\no 
\begin{defn}
Let $n \ge 1$ and $D_i$ for $1 \le i \le n$ be pseudoconvex domains in $\mbb C^k$ with $ {C}^{\infty}-$boundary in $\mbb{C}^k.$ 
Let $\rho_i$ be a $ {C}^{\infty}-$defining function for $D_i,$  i.e., $$D_i=\{z \in \mbb{C}^k: \rho_i(z)<0\}$$ such that $\nabla \rho_i \neq 0$ on $\partial \rho_i$. Then $$D= \cap_{i=1}^n D_i$$ is a piecewise smooth pseudoconvex domain if
\[ d \rho_{i_1} \wedge d \rho_{i_2} \wedge \hdots \wedge d \rho_{i_l} \neq 0\]
on $\{ z \in \partial D: \rho_{i_1}=\rho_{i_2}=\cdots=\rho_{i_l}=0\}$ for every $1 \le i_1 < i_2 <\hdots<i_l\le n.$
\end{defn}
\medskip\no 
\begin{proof}[Proof of Theorem \ref{theorem 4}]
The proof is divided into three steps.

\medskip\no 
{\it Step 1:} For a given algebraic variety of codimension 2 in $\mbb{C}^k$, $k \ge 3$, Theorem \ref{pre theorem 1} shows that for a given $\ep>0$ and for every $0< \ep < R^{-1}$, 
\begin{itemize}
\item There exists an appropriate change of coordinates, say $\phi_{\ep}$ such that
\[ \phi_{\ep}(\widetilde{V}) \subset V_R^+.\]
\item There exists a sequence of automorphisms of the form $\{F_{\alpha_p}\}_{p \ge 0}$ such that the non--autonomous basin of attraction at the origin, say $\Omega$ is a {\it Short $\mbb{C}^k$}. Further,
\[ \Omega \subset V_R \cup V_R^- \text{ and } \Omega \cap \phi_{\ep}(\widetilde{V})=\emptyset.\] 
\end{itemize}
Here $V_R$, $V_R^-$ and $V_R^+$ are as defined in Section 2. Hence to complete the proof we only need to construct an appropriate sequence of automorphisms of the form $\{F_{\alpha_p}\}_{p \ge 0}.$

\medskip\no 
{\it Step 2:} We follow the arguments in Theorem \ref{theorem 3} in $\mbb{C}^k$, $k \ge 3$ with $(k-1)$ intersections of graphs at each stage. The preliminary step will be the same as in Theorem \ref{pre theorem 3}. Then it is possible to construct a sequence of automorphisms $\{F_{\alpha_p}\}_{p \ge 0}$ and an exhaustion of $\mbb{C}^k$, i.e., $\mbb{C}^k=\cup_{p \ge 0}K_p$ such that each $$K_p=F(p)^{-1}\big(\Delta^k(0;R_{p+1}')\big) \text{ and } F(p)\big(\partial \Omega\big) \cap \Delta^k(0;R_{p+1}')=\bigcap_{i=2}^k \Gamma^i_{r_p^i}(R_{p+1}'),$$
where $\Omega$ is the basin of attraction of $\{F_p\}_{p \ge 0}$ at the origin, $\{r_p^i\}_{p \ge 0} \in  {C}^p(P_i(R_{p+1}'))$ for every $2 \le i \le k$ and $\{R_p'\}_{p \ge 1}$ is a strictly increasing sequence of positive real numbers. But now this observation is true for every $p \ge 0$ and $K_p$ is increasing i.e., $\{r_p^i\}_{p \ge 0} \in  {C}^\infty(P_i(R_{p+1}'))$. Hence the domain $\Omega$ is a {\it Short $\mbb{C}^k$} whose boundary lies in the intersection of smooth hypersurfaces. 

\medskip\no 
{\it Step 3:} Recall that $G^i_{r_p^i}(R_{p+1}')$ is the standard domain of $r_p^i$ over $P_i(R_{p+1}')$. Let 
\[ G^i= \bigcup_{p = 0}^{\infty} F(p)^{-1}\big(G^i_{r_p^i}(R_{p+1}')\big)\] for each $2 \le i \le k.$ 

\medskip\no 
{\it Claim:} $G^i$ is a smooth pseudoconvex domain for every $2 \le i \le k.$

\medskip\no 
Note that for every $2 \le i \le k$ and $p \ge 0$
\[ F(p)\big(G^i \cap K_p\big)=\{z \in \Delta^k(0;R_{p+1}'): \rho_p^i(z)<0\}\]
where $\rho_p^i(z)=|z_i|-r_p^i(z_1,\hdots,\xi_i,\hdots,z_k)$ and $\xi_i=e^{i \text{Arg}z_i}$. So locally $\rho_p^i$ is a smooth defining function.

\medskip\no 
By construction, for a fixed $2 \le i \le k$ and for a given arbitrarily small $\ep>0$, we have that
\[ \|r_p^i-1\|_{ {C}^p(P_i(R_{p+1}'))}\le \ep_p={\ep}/{2^{p}}.\]
{\it Case 1:} For $p \ge 2$ when $i \neq j,k$
\begin{align}\label{partial-1}
 \Big\|\frac{\partial r_p^i}{\partial z_j}\Big\|,\; \Big\|\frac{\partial r_p^i}{\partial \bar{z}_j}\Big\| \le \ep_p , \text{ and }\Big\|\frac{\partial^2 r_p^i}{\partial \bar{z}_k\partial z_j}\Big\|,\Big\|\frac{\partial^2 r_p^i}{\partial \bar{z}_j\partial z_k}\Big\|\le \ep_p,
 \end{align}
i.e.,
\begin{align}\label{partial-2} \Big\|\frac{\partial \rho_p^i}{\partial z_j}\Big\|,\; \Big\|\frac{\partial \rho_p^i}{\partial \bar{z}_j}\Big\| \le \ep_p , \text{ and }\Big\|\frac{\partial^2 \rho_p^i}{\partial \bar{z}_k\partial z_j}\Big\|,\Big\|\frac{\partial^2 \rho_p^i}{\partial \bar{z}_j\partial z_k}\Big\|\le \ep_p.
\end{align}
{\it Case 2:} For $p \ge 2$
\begin{align}\label{partial-3}\Big\|\frac{\partial r_p^i}{\partial \theta_i}\Big\| \le \ep_p, \text{ i.e., } \Big\|\frac{\partial r_p^i}{\partial z_i}\Big\|, \;\Big\|\frac{\partial r_p^i}{\partial \bar{z}_i}\Big\| \le \tilde{C}\ep_p
\end{align}
for some $\tilde{C}>0$. When $i \neq j$ by similar computations we see that
\begin{align}\label{partial-4}\Big\|\frac{\partial^2 \rho_p^i}{\partial \bar{z}_j\partial z_i}\Big\|,\Big\|\frac{\partial^2 \rho_p^i}{\partial \bar{z}_i\partial z_j}\Big\|\le \tilde{C}\ep_p \text{ and }\Big\|\frac{\partial^2 r_p^i}{\partial \bar{z}_i\partial z_i}\Big\|\le\tilde{C} \ep_p,
\end{align}
i.e., there exists $C>0$ such that
\begin{align}\label{partial-5}\frac{\partial^2 \rho_p^i}{\partial \bar{z}_i\partial z_i}>C-\tilde{C} \ep_p \text{ and } \Big\|\frac{\partial \rho_p^i}{\partial z_i}\Big\|, \; \Big\|\frac{\partial \rho_p^i}{\partial \bar{z}_i}\Big\| > C-\tilde{C}\ep_p.
\end{align}
Since $\ep_p \to 0$ as $p \to \infty$, it follows that if $F(p)^{-1}(w) \in \partial G_i \cap K_p$, then 
\[\sum_{m,n=1}^{k}\frac{\partial^2 \rho_p^i}{\partial \bar{z}_m\partial z_n}(w)t_m t_n >0 \text{ whenever } \sum_{n=1}^k\frac{\partial \rho_p^i}{\partial {z}_n}(w)t_n=0\]from (\ref{partial-1})-- (\ref{partial-5}).
Since $K_p$ is an exhaustion of $\mbb{C}^k$, $G_i$ is a pseudoconvex domain. 

\medskip\no 
Observe that the basin of attraction of $\{F_{\alpha_p}\}_{p \ge 0}$ is the intersection of the domains $G^i$ where $2 \le i \le k$, i.e.,
\[ \Omega=\bigcap_{i=2}^k G^i.\]
Lastly, by the same arguments as above and from (\ref{partial-1})--(\ref{partial-5})
\[ d \rho_{p}^{i_1} \wedge d \rho_{p}^{i_2} \wedge \hdots \wedge d \rho_{p}^{i_l} \neq 0\]
on $\{ w \in F(p)\big(\partial \Omega \cap K_p\big): \rho_{p}^{i_1}=\rho_{p}^{i_2}=\cdots=\rho_{p}^{i_l}=0\}$ for every $2 \le i_1 < i_2 <\hdots<i_l\le k$ whenever $p \ge 0$. This completes the proof.
\end{proof}
	\bibliographystyle{amsplain}
	\bibliography{ref}
	\end{document}